\documentclass{amsart}

\usepackage[utf8]{inputenc}
\usepackage[hidelinks]{hyperref}
\usepackage[OT2,T1]{fontenc}
\DeclareSymbolFont{cyrletters}{OT2}{wncyr}{m}{n}
\DeclareMathSymbol{\Sha}{\mathalpha}{cyrletters}{"58}

\usepackage{amsmath}
\usepackage{amssymb}

\usepackage{amsthm}
\theoremstyle{plain}
\newtheorem{theorem}{Theorem}[section]
\newtheorem{lemma}[theorem]{Lemma}
\newtheorem{proposition}[theorem]{Proposition}
\newtheorem{corollary}[theorem]{Corollary}
\theoremstyle{definition}
\newtheorem{definition}{Definition}[section]
\newtheorem{conjecture}{Conjecture}
\newtheorem{example}{Example}[section]
\theoremstyle{remark}
\newtheorem{remark}[theorem]{Remark}

\usepackage{tikz-cd}

\title{Topology of projective Tate-Shafarevich twists}
\author{David Zhiyuan Bai}
\date{\today}
\address{Yale University}
\email{david.bai@yale.edu}

\begin{document}
\begin{abstract}
    A Tate-Shafarevich twist $X^\phi\to B$ of a fibration $X\to B$ modifies it by a $1$-cocycle of flows of vector fields relative to the base, locally in the analytic topology.
    Sacc\`a conjectured that the total spaces of two projective Lagrangian fibrations related by such a twist are deformation-equivalent.

    Assuming that the class of the twist is torsion (which is often equivalent to the twist being realizable in the \'etale topology), we show that there is an isomorphism $H^\ast(X;\mathbb Q)\cong H^\ast(X^\phi;\mathbb Q)$ of graded vector spaces that respects (1) the Hodge structures and (2) the Hodge-Riemann pairing.
    Consequently, the rational Beauville-Bogomolov-Fujiki lattices of these two spaces are Hodge-similar.

    Assuming further that $B$ is smooth, and both the original fibration and its twist admit $C^\infty$-sections, we show Sacc\`a's conjecture using the theory of degenerate twistor deformations.
\end{abstract}
\maketitle
\tableofcontents

\section{Introduction}
We work over $\mathbb C$ and, unless otherwise stated, endow the spaces we consider with the analytic (Euclidean) topology.
\subsection{A conjecture of Sacc\`a}
Given a fibration $\pi:X\to B$ and a $1$-cocycle $\phi$ of relative automorphisms that are locally given by flows of vertical vector fields (\emph{i.e.}~sections of $\pi_\ast T_{X/B}$), one can re-glue the fibration with transition functions modified by $\phi$.
This is called the \emph{Tate-Shafarevich twist} of $X$ by $\phi$.
See \S\ref{ss:rec_TS} for a precise formulation and discussion of this construction.

In general, there is no reason for $X^\phi$ to even be homeomorphic to $X$ (Example \ref{example_projbundle}), even though the fibrations have isomorphic fibers.
Nonetheless, when $X$ is an irreducible hyper-K\"ahler variety (in which case its fibrations are called Lagrangian fibrations), the situation becomes very interesting:
It turns out that any twist of a fibration on $X$ is still holomorphic symplectic, hence hyper-K\"ahler (in fact, irreducible hyper-K\"ahler) as soon as it is K\"ahler.

There is only a small set of known deformation types of irreducible hyper-K\"ahler varieties.
One then wonders if it would ever be possible to reach new deformation types by performing a Tate-Shafarevich twist on a Lagrangian fibration.

\begin{conjecture}\label{saccaconj}
    Total spaces of projective Lagrangian fibrations that are related by a Tate-Shafarevich twist are deformation-equivalent.
\end{conjecture}

Sacc\`a considered this in the following context:
In \cite[Theorem 6.13]{sacca}, she started with a Lagrangian fibration with integral fibers and constructed the relative compactified Albanese of it, which is a Tate-Shafarevich twist of the original fibration that admits a section.
From there, she conjectured:
\begin{conjecture}\label{intro:saccaconjorig}
    The relative compactified Albanese of a Lagrangian fibration with integral fibers stays in the same deformation class.
\end{conjecture}
She provided evidence for this by showing that the singular cohomology groups of these two spaces coincide as $\mathbb Q$-Hodge structures by applying Ng\^o's support theorem \cite{ngo}.

The statement of Conjecture \ref{saccaconj} is a generalization of this that relaxes both the condition on fibers (allowing non-integral fibers) and the topology (analytic instead of \'etale) in question.

\subsection{Cohomology groups}
We generalize the observation of Sacc\`a on singular cohomology groups to arbitrary (projective) Tate-Shafarevich twists.

From now on the base $B$ will always be assumed to be normal and projective.
\begin{theorem}\label{intro:hodgestr}
    Suppose $X\to B$ is a projective fibration and $X^\phi\to B$ is a Tate-Shafarevich twist of it.
    Suppose in addition that $X^\phi$ is also projective.
    Then we have $H^k(X;\mathbb Q)\cong H^k(X^\phi;\mathbb Q)$ as $\mathbb Q$-Hodge structures for each $k$.
\end{theorem}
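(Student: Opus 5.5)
We will compare $H^k(X;\mathbb Q)$ and $H^k(X^\phi;\mathbb Q)$ through the decomposition theorem for the two projective morphisms $\pi:X\to B$ and $\pi^\phi:X^\phi\to B$. The starting point is that $R\pi_\ast\mathbb Q_X$ and $R\pi^\phi_\ast\mathbb Q_{X^\phi}$ are \emph{locally} isomorphic over $B$. On a small analytic open $U\subset B$ on which the twisting cocycle is trivialised, $X^\phi|_U$ is glued from $X|_U$ by automorphisms that are time-one flows of vertical vector fields, so each such automorphism is isotopic to the identity over $U$.

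First we would show that these local identifications glue. Flows isotopic to the identity act trivially on $R\pi_\ast\mathbb Q$ over their domains, so the transition automorphisms act as the identity in the derived category. Hence the perverse cohomology sheaves ${}^p\mathcal H^i(R\pi_\ast\mathbb Q_X)$ and ${}^p\mathcal H^i(R\pi^\phi_\ast\mathbb Q_{X^\phi})$ are isomorphic as perverse sheaves on $B$. Perverse sheaves form a stack, so gluing at this level is legitimate, whereas gluing in $D^b_c(B)$ itself would not be. By the decomposition theorem, each pushforward is the direct sum of its shifted perverse cohomology sheaves. This gives isomorphisms
\[
H^k(X;\mathbb Q)\cong\bigoplus_i H^{k-i}\bigl(B,{}^p\mathcal H^i(R\pi_\ast\mathbb Q)\bigr)\cong H^k(X^\phi;\mathbb Q).
\]

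Second, we upgrade this to Hodge structures. By Saito's theory, each ${}^p\mathcal H^i$ underlies a polarisable pure Hodge module. Its hypercohomology carries a Hodge structure, and the decomposition above is compatible with Hodge structures, since the perverse filtration on $H^k(X)$ is by sub-Hodge structures and splits after choosing a relatively ample class, as in de Cataldo--Migliorini. It therefore suffices to show that the perverse sheaves ${}^p\mathcal H^i$ for $X$ and $X^\phi$ underlie \emph{isomorphic} Hodge modules. By the structure theorem, each is a direct sum of intermediate extensions of polarisable variations of Hodge structure $L$ on strata $S$. Intermediate extension is fully faithful and determined by generic data, so it suffices to match the VHS generically on each support. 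Locally over $S$ the twist identifies the fibrations by holomorphic isomorphisms isotopic to the identity. These induce isomorphisms of the VHS that agree on overlaps, because the transition maps act trivially on cohomology. Hence the variations of Hodge structure themselves glue.

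The main obstacle is the gluing step. We need the local isomorphism of perverse cohomology sheaves to be canonical, meaning independent of the choice of trivialisation. This requires knowing that a vertical flow $\exp(v)$ acts as the identity on ${}^p\mathcal H^i(R\pi_\ast\mathbb Q)|_U$, not merely on stalks. We would argue this through the homotopy $t\mapsto\exp(tv)$, which gives a family of fibrewise automorphisms over $U\times[0,1]$. Homotopy invariance for constructible complexes then shows the induced action on $R\pi_\ast\mathbb Q|_U$ equals the identity. There is a subtlety here, because automorphisms of an object in $D^b_c$ that induce the identity on all perverse cohomology sheaves can still be nontrivial. This is exactly why we glue only the perverse cohomology sheaves and invoke the decomposition theorem separately for $X$ and for $X^\phi$. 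Projectivity of both $X$ and $X^\phi$ is needed precisely so that the decomposition theorem and Saito's theory apply to both morphisms.
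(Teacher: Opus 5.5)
Your proposal is correct and follows the paper's route. You glue the local isomorphisms only at the level of perverse cohomology sheaves, using descent and the fact that the transition flows $\exp(v)$ act trivially. You then apply the decomposition theorem to both $\pi$ and $\pi^\phi$ and take hypercohomology. Two steps are variants of the paper's, not different ideas: you get triviality of the flow action from homotopy invariance, where the paper argues that a one-parameter group acting on a finitely generated group must act trivially; and you verify compatibility with Hodge modules through the generic variations of Hodge structure on strict supports, where the paper asserts it directly.
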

This is proved in \S\ref{ss:pervsheaves}.
We do not need the fibration to be Lagrangian.

Note that this theorem is false if the projectivity assumption on $X^\phi$ is dropped.
Indeed, in \cite{abashevaii}, it was observed that the second Betti number can be different for non-K\"ahler twists of a Lagrangian fibration.
An explicit example given by the Bogomolov-Guan manifold is described in \cite[Remark 5.3.13]{abashevaii}.

Theorem \ref{intro:hodgestr} holds under the projectivity assumption because it allows us to use the decomposition theorem for perverse sheaves.
We in fact show:
\begin{theorem}\label{intro:mixedhodge}
    Under the assumptions of Theorem \ref{intro:hodgestr}, there exists natural isomorphisms ${}^{\mathfrak p}\mathcal H^k(\mathbf R\pi_\ast\underline{\mathbb Q})\cong {}^{\mathfrak p}\mathcal H^k(\mathbf R\pi_\ast^\phi\underline{\mathbb Q})$ of mixed Hodge modules for each $k$.
\end{theorem}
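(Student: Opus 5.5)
The plan is to compare the two pushforwards locally over $B$ and then globalize using the structure of perverse sheaves and the decomposition theorem. The twist $X^\phi$ is obtained by re-gluing $X$ over an open cover $\{U_i\}$ of $B$ with transition automorphisms $\phi_{ij}\in\Gamma(U_{ij},\pi_\ast\mathrm{Aut}^0)$, each the time-one flow of a vertical vector field. Over each $U_i$ there is a canonical identification $X|_{U_i}\cong X^\phi|_{U_i}$, and hence an isomorphism $\alpha_i:\mathbf R\pi_\ast\underline{\mathbb Q}|_{U_i}\cong\mathbf R\pi^\phi_\ast\underline{\mathbb Q}|_{U_i}$. On overlaps, $\alpha_i$ and $\alpha_j$ differ by the automorphism of $\mathbf R\pi_\ast\underline{\mathbb Q}|_{U_{ij}}$ induced by $\phi_{ij}$. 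Since $\phi_{ij}=\exp(v_{ij})$ is connected to the identity through the flow $\exp(tv_{ij})$, $t\in[0,1]$, of fiber-preserving automorphisms, it acts on $\mathbf R\pi_\ast\underline{\mathbb Q}$ by a map homotopic to the identity.

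The key issue is that this homotopy statement holds only up to homotopy. In the derived category the $\alpha_i$ need not glue, because the gluing data of a complex are not determined by data on cohomology sheaves; derived categories do not form a stack. I would therefore pass to the perverse cohomology sheaves ${}^{\mathfrak p}\mathcal H^k$. Perverse sheaves do form a stack on $B$, and morphisms between them are glued from local data in degree $0$. The induced isomorphisms ${}^{\mathfrak p}\mathcal H^k(\alpha_i)$ then agree on $U_{ij}$, since ${}^{\mathfrak p}\mathcal H^k(\phi_{ij}^\ast)=\mathrm{id}$. To see this, note that a homotopy $X|_{U_{ij}}\times[0,1]\to X|_{U_{ij}}$ over $U_{ij}$ induces a homotopy in the sense of sheaves: the two maps $\mathbf R\pi_\ast\underline{\mathbb Q}\to \mathbf R(\pi\times\mathrm{id})_\ast\underline{\mathbb Q}$ restricted to $t=0$ and $t=1$ agree by proper base change and homotopy invariance of $[0,1]$. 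So $\phi_{ij}^\ast$ equals the identity as a morphism in $D^b_c(U_{ij})$, and in particular on every ${}^{\mathfrak p}\mathcal H^k$. Gluing gives global isomorphisms ${}^{\mathfrak p}\mathcal H^k(\mathbf R\pi_\ast\underline{\mathbb Q})\cong{}^{\mathfrak p}\mathcal H^k(\mathbf R\pi^\phi_\ast\underline{\mathbb Q})$ of perverse sheaves. They are natural because the local identifications come from the twist construction and are independent of choices up to this same homotopy argument.

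The main obstacle is to upgrade this isomorphism to one of mixed Hodge modules. Here I use projectivity of both $X$ and $X^\phi$. By Saito, each ${}^{\mathfrak p}\mathcal H^k$ underlies a pure Hodge module of weight $\dim X+k$, and it decomposes by strict support into semisimple pieces. Unlike perverse sheaves, Hodge modules are not glued in the analytic topology on $B$ in a way compatible with these local analytic identifications over arbitrary opens, so I would argue via rigidity instead. On a Zariski dense open $U\subset B$ over which both maps are smooth, the local identifications are fibrewise isomorphisms $X_b\cong X^\phi_b$ given by translations or flows. These flows act trivially on cohomology and respect the Hodge filtration fibrewise, so the perverse isomorphism restricts there to an isomorphism of polarizable VHS. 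Each simple summand of a pure Hodge module with strict support $Z$ is the intermediate extension of a VHS on a dense open of $Z$. Stratifying $B$ so that both pushforwards are stratified-locally-trivial, and treating each support $Z$ in turn (the same argument works on the smooth locus of the restricted fibration over $Z$), the perverse isomorphism restricts on each summand to an isomorphism of the underlying VHS. By the Deligne/Schmid rigidity of Hodge structures on intermediate extensions, it is then an isomorphism of Hodge modules. The delicate point is checking, for each support, that the topological isomorphism matches the Hodge filtrations on the generic VHS. I would attempt this by realizing the isomorphism locally by biholomorphisms $X|_{U_i}\cong X^\phi|_{U_i}$, which preserve Hodge filtrations of the local systems.

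Taking hypercohomology and using the degeneration of the perverse Leray spectral sequence then yields Theorem \ref{intro:hodgestr}.
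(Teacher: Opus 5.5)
Your perverse-sheaf step is correct and is essentially the paper's argument (Proposition \ref{decomp_TS}): glue the local isomorphisms using that perverse sheaves form a stack, once $\phi_{ij}$ is known to act trivially on ${}^{\mathfrak p}\mathcal H^k$. Your homotopy-invariance argument is a clean substitute for the paper's ``$\mathbb R\to\operatorname{End}$ is constant'' step. It shows $\phi_{ij}^\ast=\mathrm{id}$ already in $D^b_c(U_{ij})$, because $i_0^\ast=(p^\ast)^{-1}=i_1^\ast$.

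The gap is in the Hodge-module upgrade. For summands with strict support $B$ your generic-VHS argument works. For a summand with strict support $Z\subsetneq B$, however, the claim that ``the same argument works on the smooth locus of the restricted fibration over $Z$'' is not right. Over the generic part $Z^0$ of a discriminant component, $\pi^{-1}(Z^0)\to Z^0$ has singular fibers, so it is not a smooth family. The local system $L$ with $\operatorname{IC}_Z(L)$ a summand is only the lowest stalk cohomology $\mathcal H^{-\dim Z}$ of ${}^{\mathfrak p}\mathcal H^k$ along $Z^0$. That is a subquotient of the cohomology of the singular fibers, to which the summands with larger support also contribute. Its Hodge structure is the one produced by Saito's theory, not the Hodge structure of a smooth family, so there is nothing for ``the same argument'' to apply to. You yourself flag this as the delicate point and leave it as an attempt, so as written the proof is incomplete exactly there.

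The missing idea, which you dismissed, is that you never need to glue Hodge modules. Both sides are already global Hodge modules; you only need the given isomorphism $\Psi_k$ of underlying perverse sheaves to be a morphism of Hodge modules, and that is an analytic-local condition on $B$:
\begin{itemize}
\item By Riemann--Hilbert, $\Psi_k\otimes\mathbb C$ determines a unique morphism $g$ of the underlying regular holonomic $\mathcal D$-modules.
\item Whether $g(F_p)\subset F_p$ holds can be checked on an analytic cover.
\item Compatibility with weights is automatic, since both sides are pure of the same weight.
\item Over $U_i$, $\Psi_k$ is induced by the biholomorphism $\pi^{-1}U_i\cong(\pi^\phi)^{-1}U_i$ over $U_i$.
\item Saito's direct image of $(\mathcal O_X,F)$ under a projective morphism is computed by a filtered relative de Rham complex, locally over the base, so this biholomorphism identifies the Hodge filtrations over $U_i$.
\end{itemize}
This is what the paper's ``in particular, $\Psi_k$ is also an isomorphism of mixed Hodge modules'' rests on. It makes your detour through generic VHS and Saito's structure theorem unnecessary; your last sentence gestures at it, but it should be applied directly on $B$ rather than only at generic points of supports.
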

This yields a non-canonical isomorphism $\mathbf R\pi_\ast\underline{\mathbb Q}\cong\mathbf R\pi^\phi_\ast\underline{\mathbb Q}$, from which the previous theorem follows.

\subsection{Lagrangian fibrations}
Let us now impose two conditions for the rest of our theorems.
Firstly, we would like the original fibration to be projective and Lagrangian, \emph{i.e.}~the total space to be an irreducible hyper-K\"ahler variety.
A consequence of this is that a general fiber of the fibration would be an abelian variety, thus the sheaf $\underline{\operatorname{Aut}}_{X/B}^0$ is actually a sheaf of commutative groups.
It then makes sense to define the Tate-Shafarevich group $\Sha_{X/B}=H^1(B,\underline{\operatorname{Aut}}^0_{X/B})$ which parameterizes Tate-Shafarevich twists.

The second condition we impose is that the twist $\phi\in\Sha_{X/B}$ is torsion.
This is expected to be the case for when the twist is algebraic, \emph{i.e.}~it can be achieved in the \'etale topology.
See \S\ref{ss:torsion} for a discussion of this condition.

As a consequence, results in \cite{abashevaii} would show that the twist $X^\phi$ is also a projective Lagrangian fibration.

In this instance, we obtain the invariance of more structures on the singular cohomology.
\begin{theorem}\label{intro:HRform}
    Suppose $\pi:X\to B$ is a projective Lagrangian fibration and $\phi\in\Sha_{X/B}$ is torsion.
    Then there exists ample classes $\omega,\omega^\phi$ on $X,X^\phi$ respectively such that the isomorphism in Theorem \ref{intro:hodgestr} can be chosen to preserve the Hodge-Riemann bilinear form
    \[\langle \alpha,\beta\rangle_\omega=\int(1+\omega+\omega^2+\cdots)\smile\alpha\smile\beta\]
    associated to $\omega$ and $\omega^\phi$.
\end{theorem}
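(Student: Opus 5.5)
We will prove this by choosing the ample classes $\omega,\omega^\phi$ so that they restrict to the same relative class over $B$. We then arrange the isomorphism of Theorem \ref{intro:mixedhodge} to respect the relative Hard Lefschetz structure on both sides, together with the perverse filtration and Verdier duality. The Hodge--Riemann form is then computed through the decomposition theorem, and it matches on the two sides.

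\textbf{Step 1 (matching polarizations).} Since $\phi$ is torsion, say of order $n$, the results of \cite{abashevaii} show that $X^\phi$ is projective. More precisely, for a relatively ample line bundle $L$ on $X$, the twisted bundle $L^{\otimes n}$ descends to a relatively ample line bundle $L^\phi$ on $X^\phi$. The reason is that translations by local sections of $\underline{\operatorname{Aut}}^0_{X/B}$ act on $\operatorname{Pic}$ through $\operatorname{Pic}^0$ of the fibers, and this obstruction is killed by $n$. Restricted to a local chart over $U\subset B$, $L^{\otimes n}$ and $L^\phi$ differ by a translation isotopic to the identity, so they have the same Chern class in $H^2(\pi^{-1}U)$. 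Fix an ample $A$ on $B$ and set $\omega=c_1(L^{\otimes n})+N\pi^\ast A$ and $\omega^\phi=c_1(L^\phi)+N\pi^{\phi\ast}A$ for $N\gg0$. The relative classes $\eta=c_1(L^{\otimes n})$ and $\eta^\phi=c_1(L^\phi)$ define Lefschetz operators $\eta:\mathbf R\pi_\ast\underline{\mathbb Q}\to\mathbf R\pi_\ast\underline{\mathbb Q}[2]$, and similarly for $\eta^\phi$.

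\textbf{Step 2 (compatible isomorphism).} The isomorphism in Theorem \ref{intro:mixedhodge} is built by gluing local identifications $\pi^{-1}U\cong(\pi^\phi)^{-1}U$. On overlaps these differ by flows isotopic to the identity, which act trivially on the cohomology sheaves. I would check that the same gluing is compatible with three further structures:
\begin{itemize}
\item cup product with $\eta$ and $\eta^\phi$, which agree locally by Step 1;
\item cup product with $\pi^\ast A$;
\item the relative Verdier duality pairing ${}^{\mathfrak p}\mathcal H^{-k}\otimes{}^{\mathfrak p}\mathcal H^{k}\to\omega_B$, coming from Poincaré duality on the fibers.
\end{itemize}
These are all local over $B$ and canonical, so they are respected. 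This gives isomorphisms $P^k\cong P^{\phi,k}$ of polarizable Hodge modules that intertwine the $\eta$-actions and the duality pairings.

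\textbf{Step 3 (splitting and computing).} I would use the de Cataldo--Migliorini description of the global Hodge--Riemann form. Choose a decomposition of $\mathbf R\pi_\ast\underline{\mathbb Q}$ adapted to $\eta$, for instance Deligne's canonical splitting attached to a relatively ample class. This splitting is determined functorially by the $\mathfrak{sl}_2$-action of $\eta$ on $\bigoplus{}^{\mathfrak p}\mathcal H^k$. By Step 2 the two splittings correspond, which yields an isomorphism $H^\ast(X)\cong H^\ast(X^\phi)$ built from $H^\ast(B,P^k)\cong H^\ast(B,P^{\phi,k})$. Under this isomorphism:
\begin{itemize}
\item cup product with $\eta$ and with $\pi^\ast A$ corresponds on the two sides;
\item the intersection pairing $\int\alpha\smile\beta$ corresponds, since it is induced by the relative duality together with the duality on $B$.
\end{itemize}
Every term $\int\omega^j\smile\alpha\smile\beta$ is a polynomial expression in these operations, so the Hodge--Riemann form $\langle\alpha,\beta\rangle_\omega$ is preserved.

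\textbf{Main obstacle.} The hardest point is Step 3. The global cup product by $\eta$ does not preserve the splitting in general. It is only upper triangular with respect to the perverse filtration, and its off-diagonal components are not obviously the same on $X$ and $X^\phi$. My first approach would be Deligne's canonical decomposition for the pair $(\eta,\pi^\ast A)$, which is functorial in the relative Lefschetz data. If that is not enough, I would use that $\eta$ and $\eta^\phi$ are glued from locally identical classes to show directly that the full maps $\eta:\mathbf R\pi_\ast\to\mathbf R\pi_\ast[2]$ correspond, not just their graded pieces. The remaining freedom in the choice of $\omega$ (the multiple $n$ and the constant $N$) can then be used to absorb any discrepancy.
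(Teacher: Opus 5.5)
Your Steps 1 and 2 are sound and match the paper's preparation: $L,L^\phi$ are locally isomorphic by \cite{abashevaii}, and the graded Lefschetz and duality data are morphisms of shifted perverse sheaves, so they glue from the local identifications. The gap is the point you yourself flagged, and none of your proposed remedies closes it.

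Relative to any splitting $\mathbf R\pi_\ast\underline{\mathbb Q}\cong\bigoplus_k\mathcal H_k[-k]$, the global map $\eta\colon\mathbf R\pi_\ast\underline{\mathbb Q}\to\mathbf R\pi_\ast\underline{\mathbb Q}[2]$ has a diagonal part $\mathcal H_k\to\mathcal H_{k+2}$. It also has components in $\operatorname{Ext}^{k+2-j}(\mathcal H_k,\mathcal H_j)$ for $j<k+2$.
\begin{itemize}
\item Such positive-degree Ext classes are not determined by their restrictions to the $U_i$. Canonicity of Deligne's splitting only guarantees that the splittings for $X$ and $X^\phi$ agree over each $U_i$.
\item Your justification for the intersection pairing has the same defect. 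Relative to a splitting, the global duality isomorphism can also have components in positive Ext groups. (Cup product with $\pi^\ast A$ is harmless: the $H^\ast(B)$-action is natural, so any isomorphism in $D^b(B)$ intertwines it.)
\item Adjusting $n$ or $N$ only rescales $\eta$ or adds multiples of $\kappa$. It leaves the discrepancy between $\eta\smile-$ and $\eta^\phi\smile-$ untouched.
\end{itemize}
In fact Steps 2--3 use nothing beyond the local matching of $L$ and $L^\phi$, and in that generality the conclusion is false. Take $X=\mathbb P^2\times\mathbb P^1$ and its twist $X^\phi=\mathbb P(\mathcal O\oplus\mathcal O(1))$ (Example~\ref{example_projbundle}), with $L,L^\phi$ the relative anticanonical bundles. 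Then $\eta^2=0$, while $(\eta^\phi)^2=\pi^{\phi\ast}(c_1^2-4c_2)=\pi^{\phi\ast}[\mathrm{pt}]$ for $c_i=c_i(\mathcal O\oplus\mathcal O(1))$. So $\int_X\eta^3=0\neq 2=\int_{X^\phi}(\eta^\phi)^3$. Hence no isomorphism can fix $1$, intertwine $\eta\smile-$ with $\eta^\phi\smile-$, and preserve $\int$.

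The missing ingredient is specific to Lagrangian fibrations: Theorem~\ref{shenyinmain} (Shen--Yin). It gives a multiplicative splitting of the perverse filtration, i.e.\ a ring isomorphism $H^\ast(X;\mathbb Q)\cong\mathbb H_\pi$. The paper uses it as follows.
\begin{enumerate}
\item Normalize $L$ by adding $\lambda\kappa$ so that $q_X(c_1(L))=0$. This is possible because $q_X(\kappa)=0\neq q_X(c_1(L),\kappa)$. Your choice of $\omega$ lacks this normalization.
\item The Shen--Yin splitting can then be chosen to send $c_1(L)$ to the pure class $\eta\in\operatorname{Gr}^P_2H^2$ and $\kappa$ to $\kappa$. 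So $\omega=c_1(L)+N\kappa\mapsto\eta+N\kappa$, and likewise on $X^\phi$.
\item The comparison now takes place between the associated graded rings, where only diagonal data enter. The operators $\kappa\smile^{\mathbb H}-$, $\eta\smile^{\mathbb H}-$ and the pairing into $\operatorname{Gr}^P_{2r}H^{2\dim X}$ come from two kinds of maps: products $\mathcal H_k\otimes\mathcal H_{k'}\to\mathcal H_{k+k'}$ in which $\mathcal H_0$ or $\mathcal H_{2r}$ appears, and the maps $\mathbb L_k\colon\mathcal H_k\to\mathcal H_{k+2}$. All of these are morphisms in $\operatorname{Perv}(B)[-\dim B]$, hence glued from local data (Corollary~\ref{intersectionform}, Proposition~\ref{lef_action}).
\end{enumerate}
The final isomorphism is the composite $H^\ast(X)\cong\mathbb H_\pi\to\mathbb H_{\pi^\phi}\cong H^\ast(X^\phi)$. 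The off-diagonal Ext data you were trying to compare across the twist is never compared. The multiplicative splitting makes it irrelevant on each side separately.
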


This is obtained by studying, instead of the cohomology ring, the associated graded ring with respect to the perverse filtration.

Suppose $\pi:X\to B$ is a fibration.
The perverse filtration on the singular cohomology of $X$ is defined as the images of
\[P_kH^l(X;\mathbb Q)=\operatorname{Im}(H^l(B,{}^{\mathfrak p}\tau_{\le k+\dim B}\mathbf R\pi_\ast\underline{\mathbb Q})\to H^l(B,\mathbf R\pi_\ast\underline{\mathbb Q})=H^l(X;\mathbb Q).\]

Perverse filtrations for generically abelian fibrations have been studied intensively in the last decade.
Among many other examples, they are used in the study of the Hitchin fibration in relation to the $P=W$ conjecture \cite{pwconj, pw1, hmms, pw3}, moduli spaces of one-dimensional sheaves on del Pezzo surfaces \cite{weite1, weite2, weite3}, Lagrangian fibrations of compact hyper-K\"ahler manifolds \cite{shenyin,shenyinharderli,shenyinfelisetti}, and universal compactified Jacobians \cite{bmsy25}.

The perverse filtration is called multiplicative if the cup product takes $P_kH^l\times P_{k'}H^{l'}$ to $P_{k+k'}H^{l+l'}$.
If this were the case, then the cup product descends to a product on the bigraded vector space
\[\mathbb H_\pi=\bigoplus_{k,l}\operatorname{Gr}_k^PH^l(X;\mathbb Q)\]
making it a bigraded ring.
We call this the associated graded ring.

In general the associated graded ring is far from being isomorphic to the original cohomology ring.
However, the miracle in the case of a Lagrangian fibration is that not only is the perverse filtration always multiplicative, but there is actually a multiplicative splitting of the perverse filtration, \emph{i.e.}~we have an isomorphism
\[H^\ast(X;\mathbb Q)\cong\mathbb H_\pi\]
of graded rings (only the cohomology grading is considered on the latter).
This is the result of \cite[Appendix A]{shenyin}.

Our strategy is then to show Theorem \ref{intro:HRform} for the analogous construction on $\mathbb H_\pi$.
This is easier because of the following:
The cup product on the level of complexes
\[\mathbf R\pi_\ast\underline{\mathbb Q}\otimes\mathbf R\pi_\ast\underline{\mathbb Q}\to\mathbf R\pi_\ast\underline{\mathbb Q}\]
is impossible to control using only local information (generally, maps between complexes are not sheafy, \emph{i.e.}~determined by values on an open cover).
However, if we fix a splitting
\[\mathbf R\pi_\ast\underline{\mathbb Q}\cong\bigoplus\mathcal H_i[-i]\]
with $\mathcal H_i\in\operatorname{Perv}(B)[-\dim B]$, then the cup product would induce maps $\mathcal H_k\otimes\mathcal H_{k'}\to\mathcal H_{k+k'}$.
We show that after taking global cohomology, this recovers the product
\[\operatorname{Gr}_k^PH^\ast (X;\mathbb Q)\times\operatorname{Gr}_{k'}^PH^\ast (X;\mathbb Q)\to \operatorname{Gr}_{k+k'}^PH^\ast (X;\mathbb Q) \]
on $\mathbb H_\pi$.

Using a construction in \cite{abashevaii}, we show:
\begin{theorem}
    There exists splittings of the respective perverse filtrations such that the maps $\mathcal H_k\otimes\mathcal H_{k'}\to\mathcal H_{k+k'}$ coincide for $X$ and $X^\phi$ (with the identifications given by Theorem \ref{intro:mixedhodge}) provided that one of $\mathcal H_k,\mathcal H_{k'},\mathcal H_{k+k'}$ is a local system.
\end{theorem}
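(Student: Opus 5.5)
Our plan is to reduce the comparison of the two product maps to a comparison over an open dense subset, and then propagate it using rigidity of morphisms into or out of a local system (more precisely, into or out of an intermediate extension). The construction from \cite{abashevaii} relevant for torsion $\phi$ gives a degree-$n$ self-map relating $X$ and $X^\phi$: multiplication by $n=\operatorname{ord}(\phi)$ on the relative group $\underline{\operatorname{Aut}}^0_{X/B}$ (or an isogeny-type correspondence) which kills $\phi$. It thereby produces, over a cover $\{U_i\}$ of $B$, local identifications $X|_{U_i}\cong X^\phi|_{U_i}$ that differ on overlaps by translations $\phi_{ij}$. Such translations lie in the identity component, so they act trivially on $\mathbf R\pi_\ast\underline{\mathbb Q}|_{U_{ij}}$ and commute with cup product. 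This is essentially how the identifications of Theorem \ref{intro:mixedhodge} arise. Consequently the local cup product maps $\mathbf R\pi_\ast\underline{\mathbb Q}^{\otimes 2}|_{U_i}\to\mathbf R\pi_\ast\underline{\mathbb Q}|_{U_i}$ for $X$ and $X^\phi$ agree under these gluings.

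First, we would fix splittings compatibly. Over the smooth locus $B^\circ$, where $\pi$ is an abelian scheme, we use the Deligne splitting determined by the eigenspaces of multiplication by $m$ (Lieberman's trick). On $X$ this is canonical, and on $X^\phi$ it is preserved by translations, so it glues. Globally we take splittings of $\mathbf R\pi_\ast\underline{\mathbb Q}$ and $\mathbf R\pi^\phi_\ast\underline{\mathbb Q}$ that are eigen-decompositions for the action of $[m]$ on $\underline{\operatorname{Aut}}^0$. Alternatively, we choose one splitting for $X$ and transport it along the non-canonical isomorphism of Theorem \ref{intro:mixedhodge}, adjusting by the $[m]$-weights so that it restricts to the Deligne splitting on $B^\circ$. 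With these choices, the two maps $\mu,\mu^\phi:\mathcal H_k\otimes\mathcal H_{k'}\to\mathcal H_{k+k'}$ coincide after restriction to $B^\circ$, because there both are computed sheaf-locally from the identified cup products and the local isomorphisms preserve the eigen-splitting.

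Second, we would extend the agreement from $B^\circ$ to all of $B$ using the hypothesis that one of the three terms is a local system. By the decomposition theorem and the support theorem for Lagrangian fibrations (all supports are $B$, as in \cite{shenyin}), each $\mathcal H_i$ is a shifted intermediate extension $IC(L_i)$ of a local system $L_i$ on $B^\circ$. The difference $\delta=\mu-\mu^\phi$ vanishes on $B^\circ$.
\begin{itemize}
\item If $\mathcal H_{k+k'}$ is the local system, we view $\delta$ by adjunction as a map $\mathcal H_k\to\mathcal H hom(\mathcal H_{k'},\mathcal H_{k+k'})$. The target is a complex whose restriction to $B^\circ$ controls it, and a map from an intermediate extension into such an object, vanishing generically, vanishes because $IC$ has no subobjects supported on the boundary.
\item If $\mathcal H_k$ (or $\mathcal H_{k'}$) is the local system, then $\mathcal H_k\otimes\mathcal H_{k'}$ is again an intermediate extension, up to shift, of $L_k\otimes L_{k'}$, since tensoring with a local system preserves $IC$. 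A map between two intermediate extensions that is zero on $B^\circ$ is zero, by full faithfulness of $j_{!\ast}$.
\end{itemize}

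The main obstacle is making the first step rigorous. The splittings and the identification of Theorem \ref{intro:mixedhodge} are non-canonical, and $\mu$ is a map of complexes rather than of sheaves, so the agreement on $B^\circ$ must come from a canonical mechanism. The plan is to rely on the $[m]$-eigen-decomposition, which is canonical and multiplicative on both sides, so that both the splitting and the product are forced. The torsion assumption is needed here to ensure that $X^\phi$ carries compatible $[m]$-actions, coming from the $n$-th power map together with translation invariance. The second-step degree bookkeeping, namely that $\operatorname{Hom}$ in the relevant perverse degrees reduces to $\operatorname{Hom}$ of the $IC$ sheaves, we expect to be routine.
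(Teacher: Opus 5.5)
The gap is in your second step. You assume every $\mathcal H_i$ is an intermediate extension from the smooth locus $B^\circ$, i.e.\ has full support, and you attribute this to \cite{shenyin}. No such support theorem holds in the generality of the statement, which allows non-integral fibers. Take an elliptic K3 surface $\pi:X\to\mathbb P^1$ with an $I_2$ fiber over $b$. Then $\mathcal H_0=\mathcal H_2=\underline{\mathbb Q}$ and $\mathcal H_1=j_\ast R^1\pi_\ast\underline{\mathbb Q}\oplus\underline{\mathbb Q}_b[-1]$, where the skyscraper accounts for the second fiber component. Now look at $\operatorname{prod}_{1,1}:\mathcal H_1\otimes\mathcal H_1\to\mathcal H_2$, a case covered by the statement and the one used in Corollary \ref{intersectionform}(b). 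Its component $\underline{\mathbb Q}_b[-2]\to\underline{\mathbb Q}$ lies in $H^2_{\{b\}}(\mathbb P^1;\mathbb Q)\cong\mathbb Q$ and records the local intersection form of the fiber components. It restricts to zero on $B^\circ$ whatever its value. So agreement on a dense open set cannot force agreement of the product maps. Neither the no-sub/no-quotient property of $\operatorname{IC}$ nor full faithfulness of $j_{!\ast}$ says anything about summands supported on the discriminant.

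When all fibers are integral, Ng\^o's support theorem does give full support. In that case your argument goes through, and it even shows the products are independent of the splittings. The reason is that over $B^\circ$, $\operatorname{prod}_{k,k'}$ is just the cup product $R^k\pi_\ast\underline{\mathbb Q}\otimes R^{k'}\pi_\ast\underline{\mathbb Q}\to R^{k+k'}\pi_\ast\underline{\mathbb Q}$ for any splitting compatible with the perverse filtration. In particular your first step is true for trivial reasons.

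What is needed is agreement on every $U_i$ of the cover, including over the discriminant. That requires splittings which are themselves compatible with the local identifications $\psi^i$ of Proposition \ref{decomp_TS}. The paper obtains this from Theorem \ref{torsion_rel_ample}: for torsion $\phi$ there are relatively ample $L$, $L^\phi$ with $L|_{\pi^{-1}U_i}\cong L^\phi|_{(\pi^\phi)^{-1}U_i}$. The canonical Deligne-type splittings attached to $L$ and $L^\phi$ commute with restriction to opens, so $\operatorname{prod}^L_{k,k'}$ and $\operatorname{prod}^{L^\phi}_{k,k'}$ agree on each $U_i$. The local-system hypothesis is used only to make the source $\mathcal H_k\otimes\mathcal H_{k'}$, or after adjunction the target $\mathbf R\underline{\operatorname{Hom}}(\mathcal H_k,\mathcal H_{k+k'})$, an object of $\operatorname{Perv}(B)[-\dim B]$, where morphisms glue from an open cover.

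Your first step does not supply such compatible splittings, for three reasons.
\begin{itemize}
\item There is no multiplication-by-$m$ endomorphism of $X$ over $B$. It does not exist over singular fibers, and over $B^\circ$ the fibration is in general only a torsor, not an abelian scheme. Hence there is no global $[m]$-eigensplitting of $\mathbf R\pi_\ast\underline{\mathbb Q}$.
\item Transporting a splitting along the isomorphism of Theorem \ref{intro:mixedhodge} is circular, since that isomorphism of complexes is itself built from a choice of splittings on both sides.
\item The construction of \cite{abashevaii} that matters here is the descent $L\mapsto L^\phi$, not a degree-$n$ self-map.
\end{itemize}
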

This is shown in \S\ref{ss:Hpi_twist} for a class of fibrations and twists satisfying certain conditions.
We explain in \S\ref{ss:rec_lagfib} why our setup (projective Lagrangian fibrations under torsion twists) is in this class.

Consequently, the intersection pairing as well as the action of $H^\ast(B;\mathbb Q)=H^\ast(\mathcal H_0)$ are preserved under the isomorphism $\mathbb H_\pi\cong\mathbb H_{\pi^\phi}$ of vector spaces.

We show in \S\ref{ss:lefschetz} that certain actions by relatively ample classes are also preserved under this isomorphism.
When combined, they show Theorem \ref{intro:HRform} as explained in \S\ref{ss:specialize_lag}.

As a consequence of Theorem \ref{intro:HRform}, we see that
\begin{corollary}
    There is a Hodge similitude $(H^2(X;\mathbb Q),q_X)\to (H^2(X^\phi;\mathbb Q),q_{X^\phi})$ where $q$ is the Beauville-Bogomolov-Fujiki (BBF) form.
\end{corollary}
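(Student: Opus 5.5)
The corollary should follow from Theorem \ref{intro:HRform} once we recover the BBF form from the Hodge--Riemann pairing via the Fujiki relation.

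Let $2n=\dim X=\dim X^\phi$, and fix the ample classes $\omega,\omega^\phi$ and the graded isomorphism $\Psi:H^\ast(X;\mathbb Q)\to H^\ast(X^\phi;\mathbb Q)$ given by Theorem \ref{intro:HRform}. By Theorem \ref{intro:hodgestr}, $\Psi$ respects Hodge structures. For $\alpha,\beta\in H^2$, only the degree-$4n$ part of the integrand survives, so
\[\langle\alpha,\beta\rangle_\omega=\int_X\omega^{2n-2}\alpha\beta .\]
By Theorem \ref{intro:HRform},
\[\int_X\omega^{2n-2}\alpha\beta=\int_{X^\phi}(\omega^\phi)^{2n-2}\Psi(\alpha)\Psi(\beta).\]

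Next we express this pairing through $q_X$. Polarizing the Fujiki relation $\int_X\gamma^{2n}=c_Xq_X(\gamma)^n$ gives
\[\int_X\omega^{2n-2}\alpha\beta=c'_X\Bigl(q_X(\omega)^{n-1}q_X(\alpha,\beta)+(2n-2)\,q_X(\omega)^{n-2}q_X(\omega,\alpha)q_X(\omega,\beta)\Bigr)\]
for a positive constant $c'_X$ depending only on $n$ and $c_X$. We restrict to the primitive part $\omega^\perp\subset H^2(X;\mathbb Q)$, taken with respect to $q_X$, or equivalently with respect to $\int\omega^{2n-1}(\cdot)$. There the pairing equals a nonzero multiple of $q_X$. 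On $\mathbb Q\omega$ it equals $c'_X(2n-1)q_X(\omega)^n$.

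The main obstacle is that $\Psi$ need not send $\omega$ to $\omega^\phi$, so it need not preserve these decompositions. I would first check whether the construction in Theorem \ref{intro:HRform} (via the Lefschetz actions of \S\ref{ss:lefschetz}) gives $\Psi(\omega)=\omega^\phi$. Failing that, I would use that $\Psi$ preserves the whole pairing, including the degree-$(0,4n)$ and $(2,4n-2)$ parts. Since $\Psi(1)$ spans $H^0$, we get $\Psi(1)=\lambda\cdot 1$, and pairing $1$ against $H^{4n-2}$ shows that $\Psi$ intertwines the functional $\int\omega^{2n-1}(\cdot)$ up to scalars. This forces $\Psi(\omega^\perp)=(\omega^\phi)^\perp$.

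With the decompositions preserved, $\Psi$ is a similitude on each orthogonal summand:
\begin{itemize}
\item on $\omega^\perp$, with multiplier $c'_Xq_X(\omega)^{n-1}\big/\bigl(c'_{X^\phi}q_{X^\phi}(\omega^\phi)^{n-1}\bigr)$;
\item on $\mathbb Q\omega$, with the corresponding ratio of $q(\cdot)^n$ terms.
\end{itemize}
These two multipliers need not agree, so I would compose $\Psi|_{H^2}$ with a rescaling of the line $\mathbb Q\omega^\phi$. This rescaling is a Hodge morphism because $\omega^\phi$ is of type $(1,1)$. The constraint is that it must rescale by a rational number: the multiplier ratio involves $q_X(\omega)/q_{X^\phi}(\omega^\phi)$ raised to an odd or even power, and if it is not a square we only get a similitude up to squares. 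In that case I would use the freedom to replace $\omega$ by a multiple, which changes the ratio by squares appropriately, or else change $\omega$ within the ample cone. The result is a Hodge similitude $(H^2(X;\mathbb Q),q_X)\to(H^2(X^\phi;\mathbb Q),q_{X^\phi})$.
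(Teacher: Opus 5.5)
Your reduction works up to the last step, and it runs parallel to the paper. $\Psi$ preserves $\int\omega^{2n-2}\alpha\beta$ on $H^2$, $\Psi(1)=\lambda\cdot 1$, and your argument that $\Psi(\omega^\perp)=(\omega^\phi)^\perp$ is correct. You also assert $\Psi(\mathbb Q\omega)=\mathbb Q\omega^\phi$ without proof. It holds because $\mathbb Q\omega$ is the orthogonal complement of $\omega^\perp$ for the form $\int\omega^{2n-2}\alpha\beta$, this form is nondegenerate on $H^2$, and $\Psi$ is an isometry for these forms.

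The gap is the final step. You claim the multipliers on $\omega^\perp$ and on $\mathbb Q\omega$ may differ, and you repair this by rescaling $\mathbb Q\omega^\phi$. That only works when their ratio is a rational square, and neither remedy handles the non-square case. Replacing $\omega$ by $t\omega$ forces replacing $\omega^\phi$ by $t\omega^\phi$ (to keep $\Psi$ compatible), which leaves the ratio unchanged; and multiplying a non-square by squares never gives a square. Nor can you move $\omega$ freely in the ample cone, since $\Psi$ is only known to respect the Hodge--Riemann forms of the specific pair $\omega,\omega^\phi$ from Theorem \ref{intro:HRform}. So as written the argument does not reach a similitude.

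The missing observation is that the two multipliers always coincide. By your own polarized Fujiki formula, with respect to the $q_X$-orthogonal splitting $H^2=\mathbb Q\omega\oplus\omega^\perp$,
\[\int_X\omega^{2n-2}\alpha\beta=c'_Xq_X(\omega)^{n-1}\bigl(q_X|_{\omega^\perp}\oplus(2n-1)\,q_X|_{\mathbb Q\omega}\bigr)(\alpha,\beta).\]
The same holds on $X^\phi$ with the \emph{same} weight $2n-1$, since it depends only on $\dim X=\dim X^\phi$. Comparing through $\Psi$ summand by summand, the factor $2n-1$ cancels. You get $q_{X^\phi}(\Psi\alpha,\Psi\beta)=m\,q_X(\alpha,\beta)$ on all of $H^2$, with the single constant $m=c'_Xq_X(\omega)^{n-1}/\bigl(c'_{X^\phi}q_{X^\phi}(\omega^\phi)^{n-1}\bigr)$. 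So $\Psi|_{H^2}$ is already the Hodge similitude, and no rescaling is needed.

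The paper skips the decomposition entirely. It uses $q^\omega(\alpha)=(2n-1)\langle 1,1\rangle_\omega\langle\alpha,\alpha\rangle_\omega-(2n-2)\langle 1,\alpha\rangle_\omega^2$. This is built only from Hodge--Riemann values, so $\Psi$ carries $q^\omega$ to $q^{\omega^\phi}$ up to the factor coming from $\Psi(1)$. It is also a positive rational multiple of $q_X$: in your notation $q^\omega=(c'_X)^2(2n-1)^2q_X(\omega)^{2n-1}q_X$, an identity that encodes exactly the equality of your two multipliers.
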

A natural question is whether we can compare the integral BBF lattices.
This turns out to be hard:
Indeed, the map in the corollary cannot preserve the integral lattices in general.
Nevertheless, we discuss in \S\ref{ss:bbf} some partial results in this front.
\subsection{Fibrations with \texorpdfstring{$C^\infty$}{Cinfty}-sections}
Let us add further constraints on our fibration.
From now on, we assume in addition that $B$ is smooth.
This is known to be true if $\dim B=2$ (see \cite{ou19,BK18,HX20}), and has been conjectured to be the case in general \cite{huybrechtsmauri}.

We say a fibration $\pi:X\to B$ between smooth varieties admits $C^\infty$-sections if there is a $C^\infty$-map $s:B\to X$ between the underlying real manifolds such that $\pi\circ s=\operatorname{id}_B$.
\begin{theorem}\label{intro:withsection}
    Suppose both $X\to B$ and $X^\phi\to B$ admit $C^\infty$-sections.
    Then Conjecture \ref{saccaconj} is true.
\end{theorem}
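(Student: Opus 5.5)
We will show that $X$ and $X^\phi$ are both deformation-equivalent to a common third space, namely the degenerate twistor deformation direction inside the Tate-Shafarevich family. Recall the structure of the family of twists. There is an exact sequence
\[0\to\Gamma\to\pi_\ast T_{X/B}\xrightarrow{\exp}\underline{\operatorname{Aut}}^0_{X/B}\to 0,\]
where $\Gamma$ is a sheaf of lattices. Since $X$ is hyper-K\"ahler, the symplectic form identifies $\pi_\ast T_{X/B}\cong\Omega^1_B$. Taking cohomology gives a map
\[H^1(B,\Omega^1_B)\to\Sha_{X/B}.\]
The classes in its image are exactly the twists reachable by degenerate twistor deformations: for $\alpha=\pi^\ast\beta$ with $\beta\in H^{1,1}(B)$, the family $\sigma+t\pi^\ast\beta$, $t\in\mathbb C$, of symplectic forms produces twists $X^{t\beta}$ that vary holomorphically over $\mathbb C$. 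Each member is deformation-equivalent to $X$, because the family is connected.

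\textbf{Step 1.} The first step is to show that $\phi$ lies in the identity component of $\Sha_{X/B}$, that is, in the image of $H^1(B,\Omega^1_B)$ modulo the discrete part $H^1(B,\Gamma)$, or at least that its image in $H^2(B,\Gamma)$ vanishes. This is where the $C^\infty$-sections enter. The obstruction class $\delta(\phi)\in H^2(B,\Gamma)$ is the obstruction to lifting $\phi$ to a continuous twist by translations. A $C^\infty$-section of $X$ trivializes the smooth torsor structure of $X$, and a $C^\infty$-section of $X^\phi$ does the same for $X^\phi$. I would argue that together they force $\delta(\phi)$ to vanish: comparing the two sections locally expresses $\phi$ as a coboundary in the sheaf of $C^\infty$ sections of $\underline{\operatorname{Aut}}^0$, i.e.\ of $C^\infty(T_{X/B})/\Gamma$. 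Since $C^\infty(T_{X/B})$ is fine, its $H^1$ and $H^2$ vanish, so the connecting map gives $H^1(B,C^\infty\text{-}\operatorname{Aut}^0)\cong H^2(B,\Gamma)$, and the class $\delta(\phi)$ dies there.

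\textbf{Step 2.} Once $\delta(\phi)=0$, $\phi$ comes from a class $\tilde\phi\in H^1(B,\pi_\ast T_{X/B})=H^1(B,\Omega^1_B)$. This class decomposes into its $H^{1,1}$-part, which is realized by degenerate twistor lines, plus a contribution from $H^1(B,\Gamma)$, which twists by a discrete class and is harder to reach. By Hodge theory on the smooth projective base $B$, which has $H^1(B,\mathcal O)=0$ and $H^2(B,\mathcal O)=0$ in the Lagrangian case, $H^1(B,\Omega^1_B)\cong H^2(B;\mathbb C)$. The plan is to connect $0$ to $\tilde\phi$ by a path of twistor parameters. Concretely, writing $\tilde\phi=t\beta$, the family $X^{s\beta}$ for $s\in[0,t]$ gives a holomorphic family of compact complex manifolds. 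The projective members of this family are dense, by Theorem \ref{intro:HRform} and the torsion results; this is where the K\"ahler condition needs care. It follows that $X$ and $X^\phi$ are deformation-equivalent.

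\textbf{Main obstacle.} The main obstacle is Step 1, together with the discrete ambiguity from $H^1(B,\Gamma)$. One must show that the lattice part of the twist does not change the deformation class, and that the sections genuinely kill the $H^2(B,\Gamma)$ obstruction, which requires care with singular fibers where $\Gamma$ is not locally constant. My first attempt would be to work on the smooth locus $B^\circ$ and extend across codimension two using normality, and to absorb $H^1(B,\Gamma)$ classes via monodromy and the isomorphism of rational cohomology from Theorem \ref{intro:hodgestr}.
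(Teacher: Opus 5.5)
\textbf{There is a genuine gap, and it is in Step 1.} Your argument is that the two $C^\infty$-sections make $\phi$ a coboundary in the sheaf of $C^\infty$ sections of $\underline{\operatorname{Aut}}^0_{X/B}$. That only works where $X$ is a torsor under $\underline{\operatorname{Aut}}^0_{X/B}$, i.e.\ over the smooth locus $B^\circ$.

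Over the discriminant, a fiber is in general a union of several orbits (for instance one open orbit per reduced component). Nothing prevents the section of $X^\phi$, transported to $X$ over $U_i$, from meeting a different orbit than $s$ does. In that case no local section $g_i$ of $\underline{\operatorname{Aut}}^0$ with $g_i\cdot s=s'_i$ exists, not even a continuous one.

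Your proposed fix does not apply either. $B\setminus B^\circ$ is a divisor, not a codimension-two set, so $H^2(B,\Gamma)\to H^2(B^\circ,\Gamma)$ need not be injective. Hence the vanishing of $\delta(\phi)|_{B^\circ}$ tells you nothing about $\delta(\phi)$.

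Nothing in the hypotheses is known to force $\phi\in\Sha^0$, and the paper never claims it. Note that if Step 1 held, $X^\phi$ would be a member of the Tate--Shafarevich (degenerate twistor) family of $X$, whose members are all K\"ahler. You would then be done at once, without projectivity or torsion. That is considerably more than is known, which is a hint that the claim is not available.

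Step 2 is also confused:
\begin{itemize}
\item $H^1(B,\pi_\ast T_{X/B})\cong H^{1,1}(B)\cong\mathbb C$ has no separate ``$H^1(B,\Gamma)$ contribution''; the image of $H^1(B,\Gamma)$ is killed in $\Sha$.
\item Theorem \ref{intro:HRform} says nothing about density of projective members, and such density is not needed anyway.
\item An isomorphism of rational cohomology cannot produce a deformation.
\end{itemize}

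\textbf{The missing idea} is to use the $C^\infty$-sections analytically, via Bogomolov--D\'eev--Verbitsky, rather than topologically.
\begin{itemize}
\item A degenerate twistor deformation makes the section holomorphic. By Theorem \ref{abashevarogov}, this deformation is a twist by some $\psi\in\Sha^0$.
\item A fibration with a holomorphic section is projective, so $\psi$ is torsion by Theorem \ref{projectiveimpliestorsion}. In the same way you get $\psi'$ for $X^\phi$.
\item Then $X^\psi$ and $X^{\phi+\psi'}=(X^\psi)^{\phi+\psi'-\psi}$ both have holomorphic sections. They differ by a torsion class, which is \'etale by Kim's comparison; this applies since $C^\infty$-sections force non-multiple fibers.
\end{itemize}

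Crucially, you do not need this class to vanish or to lie in $\Sha^0$. Over an open $B^0$ where both are abelian schemes and the sections are defined, $X^{\phi+\psi'}|_{B^0}$ is a torsor under $X^\psi|_{B^0}$ with a section, hence trivial. So the two spaces are birational, and birational irreducible hyper-K\"ahler manifolds are deformation-equivalent (Huybrechts). Only birationality is needed, which is what lets the argument discard the singular fibers, exactly where your global approach breaks down.
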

\begin{remark}
    In \cite{bkv}, it was asserted that Lagrangian fibrations admit $C^\infty$-sections if it is reduced in codimension $1$, and a general fiber of it has primitive integral homology class.
    It was pointed out to us by Koll\'ar and Sacc\`a that the proof in that manuscript does not go through.
\end{remark}

The main ingredient is a theorem of Bogomolov-D\'eev-Verbitsky \cite{BDV} on degenerate twistor deformations.
Their theorem says that, given any Lagrangian fibration admitting a $C^\infty$-section, there is a member in its degenerate twistor family making this section holomorphic.

Starting from our setup, we perform such a deformation on $X$ and $X^\phi$ respectively.
We show that the resulting spaces are still projective Lagrangian fibrations related by a torsion twist, which in the situation in question implies that the twist can be realized in the \'etale topology.

Since the new spaces now admit sections, we can produce open subschemes in them that are isomorphic by comparing the abelian schemes over a suitable open subscheme of the base.
As birational hyper-K\"ahler varieties are deformation-equivalent, we obtain Theorem \ref{intro:withsection}.
The details can be found in \S\ref{ss:twistsection}.

For now, we are unable to decide whether every Lagrangian fibration with integral fibers admits $C^\infty$-sections.
If this were true, then Conjecture \ref{intro:saccaconjorig} follows from Theorem \ref{intro:withsection}.
\begin{remark}
    Note that there are plenty of examples of Lagrangian fibrations that do not admit any $C^\infty$-section.
    Indeed, if a fibration admits a $C^\infty$-section, then it cannot have any nowhere-reduced fiber.
    Furthermore, the integral homology class of a general fiber of it must be primitive, \emph{i.e.}~indivisible.
    Now suppose $S$ is a K3 surface and $\ell$ is a primitive, basepoint-free, big and nef class on $S$.
    Then for any $r\in\mathbb Z_{\ge 1},\chi\in\mathbb Z$ coprime and a generic polarization, the moduli space $M$ of Gieseker-stable sheaves $\mathcal F$ on $S$ with
    \[c_1(\mathcal F)=r\ell,\chi(\mathcal F)=\chi\]
    is an irreducible hyper-K\"ahler manifold of K3${}^{[n]}$-type.
    There is a Lagrangian fibration $M\to |r\ell|$ taking such a sheaf to its Fitting support.
    And it has been shown in \cite{baifang} that a general fiber of this fibration has primitive integral homology class if and only if $r=1$.
    In fact, in the case $r>1$, it is shown in \emph{loc.~cit.}~that the fibration has a nowhere-reduced fiber.
\end{remark}

\subsection{Related work}
Tate-Shafarevich twists for Lagrangian fibrations can be formulated in various different ways.
We closely follow the constructions of Abasheva and Rogov \cite{AR21} (see also \cite{abashevaii}).
Recent work of Kim \cite{kim_neron} allows one to study Tate-Shafarevich twists that take value in various group schemes.
Koll\'ar \cite{kollar} studied, more generally, fibrations whose general fiber is an abelian variety, and showed the existence of ``untwisting'' under certain conditions.

For Lagrangian fibrations coming from K3 surfaces or cubic fourfolds, Tate-Shafarevich twists and degenerate twistor deformations can be studied concretely, often in relation to Brauer groups.
See the results of Markman \cite{markman2013}, Huybrechts-Mattei \cite{huybrechtsmattei1,huybrechtsmattei2}, and Dutta-Mattei-Shinder \cite{duttamatteishinder}.

\subsection*{Acknowledgements}
The author is grateful to Junliang Shen for fruitful discussions and for explaining the results of \cite{shenyin}, and to Giulia Sacc\`a and J\'anos Koll\'ar for useful comments, particularly regarding \cite{bkv}.
The author would also like to thank David Fang, Weite Pi, and Vladyslav Zveryk for helpful comments.

The author was partially supported by NSF grant DMS-2301474.

\section{Tate-Shafarevich twists and the decomposition theorem}
\subsection{Recollections on Tate-Shafarevich twists}\label{ss:rec_TS}
We are going to be interested in analytic Tate-Shafarevich twists as formulated in \cite{abashevaii}.

For us, a fibration is a proper holomorphic surjection $\pi:X\to B$ with connected fibers, where $X$ is a connected complex manifold, $B$ is a normal quasiprojective variety, and $0<\dim B<\dim X$.
\begin{definition}
    For a fibration $\pi:X\to B$, the sheaf $T_{X/B}$ of vertical vector fields is defined to be the kernel of $T_X\to \pi^\ast T_B/\operatorname{Tors}(\pi^\ast T_B)$, where $\operatorname{Tors}(\pi^\ast T_B)$ is the torsion subsheaf of $\pi^\ast T_B$.

    Denote by $\underline{\operatorname{Aut}}_{X/B}^0$ the image of the exponential map $\exp:\pi_\ast T_{X/B}\to \underline{\operatorname{Aut}}_{X/B}$.
\end{definition}
So $\underline{\operatorname{Aut}}_{X/B}^0$ consists of those automorphisms that are flows of vertical vector fields.
Note also that $\operatorname{Tors}(\pi^\ast T_B)=0$ if $B$ is smooth.

Given a $1$-cocycle $\phi=\{\phi_{ij}\}$ with values in $\underline{\operatorname{Aut}}^0_{X/B}$ over an open cover $\{U_i\}$ (in the analytic topology) of $B$, one can form a new fibration $\pi^\phi:X^\phi\to B$ by re-gluing $X_i=\pi^{-1}U_i$ along the automorphisms $\phi_{ij}$.

Clearly, $\pi^\phi$ is independent of modifications by a coboundary.
\begin{definition}
    We call $\pi^\phi:X^\phi\to B$ the Tate-Shafarevich twist of $\pi$ by $\phi$.
\end{definition}
\begin{example}\label{example:compactified_jac_twist}
    Suppose $c:\mathcal C\to B$ is a family of integral curves, then one can form the compactified Jacobian fibrations $X=\overline{\operatorname{Pic}}^d(\mathcal C/B)\to B$ of different degrees $d$.
    Let us assume that these spaces $\overline{\operatorname{Pic}}^d(\mathcal C/B)$ are smooth.

    All of them are related to each other via a Tate-Shafarevich twist.
    Indeed, the exponential map $\pi_\ast T_{X/B}\to\underline{\operatorname{Aut}}_{X/B}$ in this case factors through
    \[\mathbf R^1c_\ast\mathcal O\to \mathbf R^1c_\ast\mathcal \mathcal O^\times.\]

    On the other hand, these families of compactified Jacobians of different degrees are isomorphic as soon as the curve family admits a section.
    On the other hand, an arbitrary curve family of such kind always admits analytic local sections.
    So these compactified Jacobians are isomorphic when restricted to an analytic open covering $\{U_i\}$ of the base.

    On double intersections $U_i\cap U_j$, the difference between the choices of an analytic section on $U_i$ versus that on $U_j$ is measured precisely by an action of $\operatorname{Pic}_{\mathcal C_{U_i\cap U_j}/U_i\cap U_j}^0$, which lies in $\Gamma(U_i\cap U_j,\underline{\operatorname{Aut}}_{X/B}^0)$.
\end{example}
\begin{example}\label{example_projbundle}
    Consider $X=B\times\mathbb P^{r-1}\to B$.
    Note that in this case $\pi_\ast T_{X/B}\to\underline{\operatorname{Aut}}_{X/B}$ is simply the exponential map
    \[\mathfrak{pgl}_r\to\operatorname{PGL}_r,\]
    and is therefore surjective.
    So any $\mathbb P^{r-1}$-bundle $\mathbb P\to B$ is a Tate-Shafarevich twist of $X\to B$.

    Note that $\mathbb P$ is generally not even homeomorphic to $X$ (by looking at the singular cohomology ring, for example).
    Nevertheless, they do share the same Betti numbers.
    We will further discuss this phenomenon soon.
\end{example}
\begin{definition}
    The Tate-Shafarevich group for the fibration $\pi:X\to B$ is $\Sha_{X/B}=H^1(B,\underline{\operatorname{Aut}}^0_{X/B})$.
\end{definition}
\begin{remark}
    As shown by Example \ref{example_projbundle}, $\underline{\operatorname{Aut}}^0_{X/B}$ can be noncommutative, in which case $\Sha_{X/B}$ is to be understood in the framework of nonabelian cohomology, and is not a group in general.
    That said, $\underline{\operatorname{Aut}}^0_{X/B}$ is commutative whenever a generic fiber of the fibration is a complex torus, which is the case for our main situation of interest (namely, Lagrangian fibrations).
\end{remark}
\begin{example}\label{ex:elliptick3}
    Suppose $S\to B=\mathbb P^1$ is an elliptic K3 surface with integral fibers and let $X=\overline{\operatorname{Pic}}^0(S/B)$.
    Then the map $H^1(B,\pi_\ast T_{X/B})\to H^1(B,\underline{\operatorname{Aut}}_{X/B}^0)=\Sha_{X/B}$ is actually surjective.
    In fact, as computed in \cite[Ch.~11]{k3},
    \[\Sha_{X/B}\cong\operatorname{Br}(X)\cong\mathbb C/\mathbb Z^{22-\rho(X)}\]
    where $\rho(X)$ is the Picard rank of $X$.

    As an aside:
    Take any $p\in B$ and write $B^0=B\setminus\{p\}$ and $X^0=\pi^{-1}B^0$.
    Then $H^1(B^0,\pi_\ast T_{X^0/B^0})=0$, which means that $X^0\cong(\pi^\phi)^{-1}(B^0)$ as complex manifolds for any $\phi\in\Sha_{X/B}$.
    But of course a Tate-Shafarevich twist of $X$ is generally not birational to $X$:
    As exhibited in Example \ref{example:compactified_jac_twist}, one can reach $\overline{\operatorname{Pic}}^d(S/B)$ for any $d$, and they are usually not all birational to each other (unless $S\to B$ has a section).

    This gives a simple example of a non-proper complex manifold that admits multiple non-isomorphic algebraic structures.
\end{example}

\begin{remark}\label{ts_twice}
    Suppose $\underline{\operatorname{Aut}}^0_{X/B}$ is commutative.
    For any $\phi\in\Sha_{X/B}$, we can naturally identify $\underline{\operatorname{Aut}}_{X/B}^0$ with $\underline{\operatorname{Aut}}_{X^\phi/B}^0$, which gives an identification of $\Sha_{X/B}$ with $\Sha_{X^\phi/B}$.
    Under these isomorphisms, for any $\psi\in\Sha_{X/B}$, we have $(X^\phi)^\psi=X^{\phi+\psi}$.
\end{remark}

\subsection{Perverse sheaves}\label{ss:pervsheaves}
In \cite{abashevaii}, in the occasion of a Lagrangian fibration, the second Betti number of $X$ and $X^\phi$ are compared.
An observation made there is that even when $X$ is a Lagrangian fibration, $X^\phi$ can have a second Betti number different from that of $X$, as a result of a mismatch between the differentials in the Leray spectral sequence.
We refer the reader to \cite[Remark 5.3.13]{abashevaii} for an explicit example.

Our purpose in this section is to point out that the situation improves drastically when a projectivity assumption is placed on both the fibration and its twist.
The difference it makes is the decomposition theorem, which shows that the \emph{perverse} Leray spectral sequence takes a very simple form in this case -- namely, it degenerates at $E_2$.
\begin{theorem}[\cite{bbdg}, \cite{saito}]\label{decomp}
    Suppose $\pi:X\to B$ is a projective morphism between varieties, then there exists a (non-canonical) splitting
    \[\mathbf R\pi_\ast\operatorname{IC}_X=\bigoplus_{k=-r}^r{}^{\mathfrak p}\mathcal H^k(\mathbf R\pi_\ast\operatorname{IC}_X)[-k]\]
    in the category of mixed Hodge modules.
    Here, $r=r_\pi=\dim X\times_BX-\dim X$ is the defect of semismallness of $\pi$.
\end{theorem}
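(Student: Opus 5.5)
This statement is quoted from \cite{bbdg} and \cite{saito} and is not proved in the paper, so what follows is only an outline of the standard argument. It has three parts: purity via Saito's theory, splitting via relative hard Lefschetz and Deligne's lemma, and the amplitude bound via a fibre-dimension estimate. Throughout I take $X$ smooth, as in all our applications, so that $\operatorname{IC}_X=\underline{\mathbb Q}_X[\dim X]$ underlies a pure Hodge module of weight $\dim X$.

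\emph{Purity and relative hard Lefschetz.} First, Saito's direct image theorem for a projective morphism $\pi$ with a relatively ample class $\eta$ gives the following. Each ${}^{\mathfrak p}\mathcal H^k(\mathbf R\pi_\ast\operatorname{IC}_X)$ underlies a polarizable pure Hodge module of weight $\dim X+k$. Moreover, cup product with $\eta$ induces isomorphisms
\[\eta^k:{}^{\mathfrak p}\mathcal H^{-k}(\mathbf R\pi_\ast\operatorname{IC}_X)\xrightarrow{\ \sim\ }{}^{\mathfrak p}\mathcal H^{k}(\mathbf R\pi_\ast\operatorname{IC}_X)(k).\]
This is the deep input. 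Saito proves it by induction on $\dim\operatorname{Supp}$, using nearby and vanishing cycles along local functions. In the geometric proof of \cite{bbdg} it comes instead from reduction to finite fields and Deligne's Weil II.

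\emph{Splitting.} Second, I would apply Deligne's lemma. Let $K$ be an object of a triangulated category with a t-structure, carrying an endomorphism $\eta:K\to K[2]$ with $\eta^k:{}^{\mathfrak p}\mathcal H^{-k}K\cong{}^{\mathfrak p}\mathcal H^kK$. Then $K\cong\bigoplus{}^{\mathfrak p}\mathcal H^kK[-k]$. To prove it, one uses the primitive decomposition to build maps ${}^{\mathfrak p}\mathcal H^kK[-k]\to K$ for $k\le 0$ and maps $K\to{}^{\mathfrak p}\mathcal H^kK[-k]$ for $k\ge0$, and checks that the induced map on perverse cohomology is an isomorphism. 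Running this argument inside $D^b\mathrm{MHM}(B)$ rather than $D^b_c(B)$ gives the splitting in the category of mixed Hodge modules, as the theorem asserts. One can alternatively argue with weights: the complex is pure, and $\operatorname{Ext}^1$ between pure Hodge modules whose weights differ in the wrong direction vanishes.

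\emph{Amplitude.} Third, I would show ${}^{\mathfrak p}\mathcal H^k=0$ for $|k|>r$. By relative hard Lefschetz it suffices to prove this for $k>r$. Stratify $B$ by the fibre dimension $d$ of $\pi$, writing $B_d$ for the stratum. Over $B_d$ the stalks of $\mathbf R\pi_\ast\operatorname{IC}_X$ vanish in degrees greater than $2d-\dim X$. The support condition for ${}^{\mathfrak p}\tau_{\le r}$ asks for vanishing in degrees greater than $r-\dim B_d$. This holds because $\dim X\times_BX\ge \dim X+(d-\dim(X\text{ over }B_d))$ forces $2d+\dim B_d-\dim X\le r$, which is the de Cataldo--Migliorini estimate. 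I expect the first step to be the main obstacle; it is the genuine content of the theorem and is taken from the cited sources. The remaining two steps are formal or elementary dimension counts.
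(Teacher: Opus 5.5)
The paper gives no proof of this statement; it only cites \cite{bbdg} and \cite{saito}. Your three-step outline is the standard argument behind those references: Saito's purity and relative hard Lefschetz, then Deligne's criterion or the weight argument run in $D^b\mathrm{MHM}(B)$, then a fibre-dimension count for the range. The first two steps are fine. The amplitude step, however, contains an error as written, and two further points need comment.

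\textbf{The dimension estimate.} Since $\dim\pi^{-1}(B_d)=\dim B_d+d$, your inequality $\dim X\times_BX\ge\dim X+(d-\dim\pi^{-1}(B_d))$ reduces to $\dim X\times_BX\ge\dim X-\dim B_d$. That is vacuous and does not give $2d+\dim B_d-\dim X\le r$. What you need is that over $B_d$ the fibre product has fibres $\pi^{-1}(b)\times\pi^{-1}(b)$ of dimension $2d$. Hence $\dim X\times_BX\ge\dim B_d+2d$; in fact $\dim X\times_BX=\max_d(\dim B_d+2d)$. So $r=\max_d(2d+\dim B_d-\dim X)$, and your stalk bound $2d-\dim X\le r-\dim B_d$ is just the definition of $r$. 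With this fix, the vanishing of ${}^{\mathfrak p}\mathcal H^k$ for $|k|>r$ follows as you say.

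\textbf{The smoothness assumption.} Restricting to smooth $X$ is not merely a convenience. The first two steps work for singular $X$ using the pure Hodge module $\operatorname{IC}^H_X$, but the range $[-r,r]$ with $r=\dim X\times_BX-\dim X$ can then fail. Here is an example.
\begin{itemize}
\item Let $L$ be a line bundle of degree $\ge3$ on an elliptic curve $E$, and let $B$ be the affine cone over $E\times\mathbb P^2$ embedded by $L\boxtimes\mathcal O(1)$.
\item Let $X=\operatorname{Spec}_{\mathbb P^2}\bigoplus_{k\ge0}H^0(E,L^k)\otimes\mathcal O_{\mathbb P^2}(k)$, a family of cones over $E$ parametrized by $\mathbb P^2$.
\item The natural map $\pi:X\to B$ is projective, an isomorphism away from the vertex $v$, and contracts the vertex section $\cong\mathbb P^2$ to $v$. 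So $X\times_BX$ has dimension $4$, and $r=0$.
\item Along the vertex section, $\operatorname{IC}_X$ has cohomology sheaves $\mathbb Q$ in degree $-4$ and $H^1(E;\mathbb Q)\cong\mathbb Q^2$ in degree $-3$.
\item Hence $\mathcal H^1(\mathbf R\pi_\ast\operatorname{IC}_X)_v\cong H^4(\mathbb P^2)\otimes H^1(E;\mathbb Q)\neq0$, which is incompatible with ${}^{\mathfrak p}D^{\le 0}$. In fact ${}^{\mathfrak p}\mathcal H^{\pm1}\cong\mathbb Q_v^{2}$.
\end{itemize}
This does not affect the paper, whose fibrations have smooth total spaces. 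It does mean the statement should be read with $X$ smooth (or rationally smooth), which is exactly the case your argument covers.

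\textbf{Deligne's lemma.} Your sketch leaves out its only nontrivial point, namely how the maps for non-extreme $k$ are produced. The usual proof inducts on the amplitude $[-d,d]$. The composite ${}^{\mathfrak p}\mathcal H^{-d}K[d]\to K\xrightarrow{\eta^d}K[2d]\to{}^{\mathfrak p}\mathcal H^{d}K[d]$ (Tate twists suppressed) induces $\eta^d$ on ${}^{\mathfrak p}\mathcal H^{-d}$. So it is an isomorphism, and ${}^{\mathfrak p}\mathcal H^{-d}K[d]$ splits off. Dually, ${}^{\mathfrak p}\mathcal H^{d}K[-d]$ splits off, and the complement satisfies the same hypothesis with amplitude $[-d+1,d-1]$.

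Your weight alternative also works. However, the vanishing it needs is of $\operatorname{Ext}^{a-b+1}({}^{\mathfrak p}\mathcal H^a,{}^{\mathfrak p}\mathcal H^b)$ for $a>b$, so in degree at least $2$, not $\operatorname{Ext}^1$. This follows from Saito's weight formalism.
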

\begin{proposition}\label{decomp_TS}
    Suppose $\pi$ and $\pi^\phi$ are both projective.
    Then for each $k$, there are isomorphisms
    \[\Psi_k:{}^{\mathfrak p}\mathcal H^k(\mathbf R\pi_\ast\underline{\mathbb Z})\cong {}^{\mathfrak p}\mathcal H^k(\mathbf R\pi_\ast^\phi\underline{\mathbb Z})\]
    of perverse sheaves.
    Moreover, after restricting to each $U_i$, this isomorphism is the cohomology of
    \[\psi^i:\mathbf R\pi_\ast\underline{\mathbb Z}|_{U_i}\cong \mathbf R\pi_\ast^\phi\underline{\mathbb Z}|_{U_i}\]
    induced by the isomorphism $\pi^{-1}U_i\cong (\pi^\phi)^{-1}U_i$ relative to $U_i$.
    In particular, $\Psi_k$ is also an isomorphism of mixed Hodge modules. 
\end{proposition}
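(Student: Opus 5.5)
The plan is to glue the local isomorphisms $\psi^i$ after passing to perverse cohomology, where gluing is possible because perverse sheaves form a stack. Write $X_i=\pi^{-1}U_i$ and $X^\phi_i=(\pi^\phi)^{-1}U_i$. By construction of the twist there are isomorphisms $f_i:X_i\to X_i^\phi$ over $U_i$. On $U_{ij}=U_i\cap U_j$, the composite $f_j^{-1}\circ f_i$ equals $\phi_{ij}$ (up to convention), an automorphism of $\pi^{-1}U_{ij}$ over $U_{ij}$ lying in $\underline{\operatorname{Aut}}^0_{X/B}$. Let $\psi^i$ be the induced isomorphisms on $\mathbf R\pi_\ast\underline{\mathbb Z}|_{U_i}$. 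The discrepancy $(\psi^j)^{-1}\psi^i$ on $U_{ij}$ is then the automorphism $\phi_{ij}^\ast$ of $\mathbf R\pi_\ast\underline{\mathbb Z}|_{U_{ij}}$. In the derived category this need not be the identity, and in any case derived-category isomorphisms do not glue. So I first apply ${}^{\mathfrak p}\mathcal H^k$, obtaining isomorphisms $\Psi_k^i$ of perverse sheaves on $U_i$. Perverse sheaves form a stack in the analytic topology (BBD), so it suffices to show that ${}^{\mathfrak p}\mathcal H^k(\phi_{ij}^\ast)=\operatorname{id}$ on $U_{ij}$. The cocycle condition is then automatic, and the $\Psi_k^i$ glue to a global $\Psi_k$ with the stated local description.

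The key step, and the main obstacle, is this triviality. Since $\phi_{ij}$ is locally the time-one flow of a vertical vector field $v$ on $\pi^{-1}V$ for small $V\subset U_{ij}$, the family $\exp(tv)$, $t\in[0,1]$, is a homotopy over $V$ from $\operatorname{id}$ to $\phi_{ij}$ through fiber-preserving maps. I would show that a fiberwise homotopy $H:\pi^{-1}V\times[0,1]\to\pi^{-1}V$ over $V$ induces equal maps on $\mathbf R\pi_\ast$. Factor through $\mathbf R(\pi\circ\mathrm{pr})_\ast\underline{\mathbb Z}$ on $\pi^{-1}V\times[0,1]$. This is isomorphic to $\mathbf R\pi_\ast\underline{\mathbb Z}$ via $\mathrm{pr}$ by homotopy invariance: proper base change with contractible fiber $[0,1]$. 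The two inclusions at $t=0$ and $t=1$ then give the same inverse isomorphism. Hence $\phi_{ij}^\ast=\operatorname{id}$ on $\mathbf R\pi_\ast\underline{\mathbb Z}|_V$ for each small $V$. Taking ${}^{\mathfrak p}\mathcal H^k$, we get morphisms of perverse sheaves that agree locally, hence agree on $U_{ij}$, since Hom of perverse sheaves is a sheaf. If $\phi_{ij}$ is only locally an exponential, the same argument applies on a finer cover. Note that projectivity is not used here; it enters in the Hodge statement.

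For the mixed Hodge module statement, work with $\mathbb Q$-coefficients. Here $\pi$ and $\pi^\phi$ are projective, so ${}^{\mathfrak p}\mathcal H^k$ of both underlies mixed (indeed pure, up to shift, using Theorem \ref{decomp} with $X$ smooth) Hodge modules. Locally on $U_i$, $\Psi_k$ is induced by the algebraic, or at least analytic, isomorphism $f_i$. It therefore respects the Hodge module structure on analytic opens, because Saito's construction is compatible with restriction to open subsets and with pullback by isomorphisms. A morphism of underlying perverse sheaves is a morphism of mixed Hodge modules if it is so locally. This holds because the Hodge module structure is given by filtered $\mathcal D$-module data, which is local, and faithfulness of the forgetful functor makes the condition local. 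The subtle point is that $U_i$ is analytic rather than Zariski open. I would handle it by working with analytic Hodge modules (polarizable Hodge modules on the analytic space, which Saito's theory allows) and noting that, on a projective $B$, algebraic mixed Hodge modules that are pure embed fully faithfully into analytic ones via GAGA for polarizable Hodge modules. So the glued $\Psi_k$ is a morphism of Hodge modules.
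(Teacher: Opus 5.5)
Your proposal is correct and follows the paper's proof: you glue the local isomorphisms after applying ${}^{\mathfrak p}\mathcal H^k$, using that perverse sheaves form a stack, and reduce to showing that $\phi_{ij}$ acts trivially, which holds because $\phi_{ij}=\exp(v)$ is joined to the identity by the flow $\exp(tv)$. The paper only asserts that $t\mapsto\exp(tv)^\ast$, with values in a finitely generated abelian group, must be constant; your fiberwise homotopy-invariance argument (via $\mathbf R\mathrm{pr}_\ast\underline{\mathbb Z}\cong\underline{\mathbb Z}$ on $\pi^{-1}V\times[0,1]$) supplies that justification, and you also treat the ``only locally exponential'' issue and the mixed Hodge module claim more carefully than the paper does.
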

\begin{proof}
    Since perverse sheaves satisfy smooth descent (see \cite[\S3.7]{achar}), it suffices to show that the local isomorphisms ${}^{\mathfrak p}\mathcal H^k(\mathbf R\pi_\ast\underline{\mathbb Z}|_{U_i})\to {}^{\mathfrak p}\mathcal H^k(\mathbf R\pi_\ast^\phi\underline{\mathbb Z}|_{U_i})$ glue, meaning that the diagram
    \[\begin{tikzcd}
        {}^{\mathfrak p}\mathcal H^k(\mathbf R\pi_\ast\underline{\mathbb Z}|_{U_i})|_{U_i\cap U_j}\dar\rar&{}^{\mathfrak p}\mathcal H^k(\mathbf R\pi_\ast\underline{\mathbb Z}|_{U_j})|_{U_i\cap U_j}\dar\\
        {}^{\mathfrak p}\mathcal H^k(\mathbf R\pi_\ast^\phi\underline{\mathbb Z}|_{U_i})|_{U_i\cap U_j}\rar&{}^{\mathfrak p}\mathcal H^k(\mathbf R\pi_\ast^\phi\underline{\mathbb Z}|_{U_j})|_{U_i\cap U_j}
    \end{tikzcd}\]
    commutes.

    Denote by $\bar\pi:\pi^{-1}(U_i\cap U_j)=(\pi^\phi)^{-1}(U_i\cap U_j)\to U_i\cap U_j$ the common restriction of $\pi$ and $\pi^\phi$.
    The natural isomorphism $\underline{\mathbb Z}\to (\phi_{ij})_\ast\underline{\mathbb Z}$ induces an isomorphism ${}^{\mathfrak p}\mathcal H^k(\mathbf R\bar\pi_\ast\underline{\mathbb Z})\to {}^{\mathfrak p}\mathcal H^k(\mathbf R\bar\pi_\ast\underline{\mathbb Z})$.
    The commutativity of the diagram above amounts to the claim that this induced isomorphism is the identity.

    Write $\phi_{ij}=\exp(v)$ for some vertical vector field $v\in\Gamma(U_i\cap U_j,\pi_\ast T_{X/B})$.
    We then have a representation $\mathbb R\to\operatorname{End}({}^{\mathfrak p}\mathcal H^k(\mathbf R\bar\pi_\ast\underline{\mathbb Z}))$ sending $t$ to the map induced by $\exp(tv)$.
    The codomain being a finitely generated abelian group, the map must be constant.
    So $\phi_{ij}$ induces the same isomorphism as the identity.
\end{proof}
\begin{corollary}\label{isom_hodged_structure}
    Suppose $X$ and $X^\phi$ are both projective.
    Then there is an isomorphism $H^k(X;\mathbb Q)\cong H^k(X^\phi;\mathbb Q)$ of Hodge structures for each $k$.
\end{corollary}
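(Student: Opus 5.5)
Since $X$ and $X^\phi$ are smooth and projective, $\underline{\mathbb Q}_X[\dim X]=\operatorname{IC}_X$, and likewise for $X^\phi$. Moreover $\pi$ and $\pi^\phi$ are projective morphisms, because a morphism from a projective variety to a separated variety is projective. So Theorem \ref{decomp} applies to both maps and gives splittings in the category of mixed Hodge modules:
\[\mathbf R\pi_\ast\underline{\mathbb Q}\cong\bigoplus_k{}^{\mathfrak p}\mathcal H^k(\mathbf R\pi_\ast\underline{\mathbb Q})[-k],\qquad \mathbf R\pi^\phi_\ast\underline{\mathbb Q}\cong\bigoplus_k{}^{\mathfrak p}\mathcal H^k(\mathbf R\pi^\phi_\ast\underline{\mathbb Q})[-k].\]
Next, tensor Proposition \ref{decomp_TS} with $\mathbb Q$. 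Taking perverse cohomology commutes with $-\otimes\mathbb Q$, since it is exact on constructible complexes. This gives isomorphisms $\Psi_k$ between the rational perverse cohomology sheaves, and the proposition asserts that these are isomorphisms of mixed Hodge modules. Summing the $\Psi_k$ and combining with the two splittings yields a non-canonical isomorphism $\mathbf R\pi_\ast\underline{\mathbb Q}\cong\mathbf R\pi^\phi_\ast\underline{\mathbb Q}$ in $D^b\operatorname{MHM}(B)$.

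Now apply the derived global sections functor $a_\ast$ for $a:B\to\operatorname{pt}$. In Saito's theory, $H^k(X;\mathbb Q)=H^k(a_\ast\mathbf R\pi_\ast\underline{\mathbb Q}_X^H)$ carries its usual mixed Hodge structure. Since $X$ is smooth projective, this is the pure Hodge structure of weight $k$. The isomorphism in $D^b\operatorname{MHM}(B)$ therefore induces an isomorphism $H^k(X;\mathbb Q)\cong H^k(X^\phi;\mathbb Q)$ of Hodge structures. Concretely, each side is $\bigoplus_j H^{k-j}(B,{}^{\mathfrak p}\mathcal H^j)$, and we match the summands term by term via $\Psi_j$.

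The main obstacle is the step "in particular, $\Psi_k$ is a morphism of mixed Hodge modules". An isomorphism of underlying perverse sheaves need not lift to MHM in general. I would justify it as follows. Locally on $U_i$, $\Psi_k$ is induced by a biholomorphism over $U_i$, so it respects the Hodge module structure there. Morphisms of mixed Hodge modules form a subset of morphisms of the underlying perverse sheaves (the functor $\operatorname{rat}$ is faithful), and being a morphism of MHM is a local condition on $B$. Alternatively, the perverse cohomology sheaves here are pure, hence polarizable Hodge modules, and these decompose by strict support into intermediate extensions of polarizable VHS on strata. On each stratum, compatibility of a map of local systems with the VHS can be checked locally, and it holds because $\Psi_k$ comes from holomorphic identifications. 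Once this is granted, the remainder is formal.
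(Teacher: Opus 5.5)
Your proposal is correct and follows the paper's route: the paper likewise combines the decomposition theorem for $\pi$ and $\pi^\phi$ with the isomorphisms $\Psi_k$ of Proposition~\ref{decomp_TS}, and then takes global cohomology. Your extra justification that $\Psi_k$ underlies a morphism of mixed Hodge modules uses faithfulness of $\operatorname{rat}$, together with the fact that compatibility with the Hodge filtration can be checked analytically locally, where $\Psi_k$ comes from the biholomorphisms $\pi^{-1}U_i\cong(\pi^\phi)^{-1}U_i$; this just spells out what the paper compresses into the phrase ``in particular'' in that proposition.
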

\begin{proof}
    The preceding proposition shows that the factors in Theorem \ref{decomp} are the same for the fibration and its twist.
    Taking global cohomology shows the claim.
\end{proof}

\section{Cohomology rings of twists}
\subsection{Good twists}
We turn our focus to specific kinds of Tate-Shafarevich twists.
As before we are interested in a pair of data $(\pi:X\to B,\phi)$ consisting of a fibration $\pi:X\to B$ and a $1$-cocycle $\phi=(\phi_{ij})$ with values in $\underline{\operatorname{Aut}}_{X/B}^0$ over an open cover $\{U_i\}$ of $B$.
In this section we will only consider the case where $X$ is proper.
\begin{definition}
    A pair $(\pi:X\to B,\phi)$ is called \emph{good} if:

    (i) the morphism $\pi$ (hence $\pi^\phi$) is equidimensional;

    (ii) there exists relatively ample line bundles $L\in\operatorname{Pic}(X)$ and $L^\phi\in\operatorname{Pic}(X^\phi)$ such that $L|_{\pi^{-1}U_i}\cong L^\phi|_{(\pi^\phi)^{-1}U_i}$ for each $i$ (in particular, both $\pi$ and $\pi^\phi$ are projective);

    (iii) the perverse filtrations associated to $\pi$ and $\pi^\phi$ are multiplicative.
\end{definition}
Under the assumptions, we have $r_\pi=\dim X-\dim B$.

Let us explain the last condition.
Recall that given an equidimensional projective fibration $\pi:X\to B$, we get a natural map ${}^{\mathfrak p}\tau_{\le k}\mathbf R\pi_\ast\underline{\mathbb Q}\to \mathbf R\pi_\ast\underline{\mathbb Q}$ for each $k$, which induces an increasing filtration
\[P_kH^l(X;\mathbb Q)=\operatorname{Im}(H^l(B,{}^{\mathfrak p}\tau_{\le k+\dim B}\mathbf R\pi_\ast\underline{\mathbb Q})\to H^l(B,\mathbf R\pi_\ast\underline{\mathbb Q}))\]
on $H^l(B,\mathbf R\pi_\ast\underline{\mathbb Q})=H^l(X;\mathbb Q)$, known as the perverse filtration associated to $\pi$.
This filtration is said to be \emph{multiplicative} if
\[P_kH^l(X;\mathbb Q)\smile P_{k'}H^{l'}(X;\mathbb Q)\subset P_{k+k'}H^{l+l'}(X;\mathbb Q).\]

As a consequence of being multiplicative, the associated graded module of the perverse filtration on cohomology inherits a product structure from the cup product.
We therefore obtain a bigraded ring, with one grading given by the perverse filtration and the other given by the cohomology grading.
\begin{definition}
    Suppose the perverse filtration associated to a(n equidimensional projective) fibration $\pi:X\to B$ is multiplicative.
    The associated bigraded ring is defined by
    \[\mathbb H_\pi=\bigoplus_{k,l}\operatorname{Gr}_k^PH^l(X;\mathbb Q)\]
    with its product, which we will denote by $\smile^{\mathbb H}$, induced by the cup product.
\end{definition}
\begin{remark}
    It should be noted that the ring $\mathbb H_\pi$ may in general retain very little information about the cup product on $H^\ast(X;\mathbb Q)$.
    In particular, they are far from being isomorphic in general.
    Indeed, this can already be seen when the fibration is the projectivization of a vector bundle (over a smooth variety), as the Chern classes of the vector bundle can no longer be obtained from this associated graded ring.

    Such phenomenon also happens for generically abelian fibrations.
    In \cite{genbeauville}, the authors considered the compactified Jacobian over a family of integral curves with at worst simple nodes, and showed that the perverse filtration (which is still multiplicative in this case) does not admit a multiplicative splitting.

    In these kinds of contexts, the associated graded ring $\mathbb H_\pi$ usually behaves better than the cohomology ring $H^\ast(X;\mathbb Q)$.
    For example, in the case of universal compactified Jacobians over $\bar{\mathcal M}_{g,n}$ ($n\ge 1$), it was shown in \cite{bmsy25} that the associated graded ring does not depend on the degree nor the stability condition, even though the cohomology ring does.

    Nonetheless, there are situations where we can recover the cup product.
    See Theorem \ref{shenyinmain}.
\end{remark}
\subsection{The associated graded ring under a twist}\label{ss:Hpi_twist}
Let us examine the bigraded ring $\mathbb H$ under good twists.
Fix a good pair $(\pi:X\to B,\phi)$.

Let us write
\[\mathcal H_k={}^{\mathfrak p}\mathcal H^{k+\dim B}(\mathbf R\pi_\ast\underline{\mathbb Q})[-\dim B]\cong {}^{\mathfrak p}\mathcal H^{k+\dim B}(\mathbf R\pi_\ast^\phi\underline{\mathbb Q})[-\dim B]\]
where the identification is given by Proposition \ref{decomp_TS}.
Note that these are elements of $\operatorname{Perv}(B)[-\dim B]$.
The grading is chosen this way so that
\begin{equation}\label{eq}\tag{*}
    \mathbf R\pi_\ast\underline{\mathbb Q}=\bigoplus_{k=0}^{2r}\mathcal H_k[-k]=\mathbf R\pi^\phi_\ast\underline{\mathbb Q}
\end{equation}
where $r=\dim X-\dim B$.
So $\operatorname{Gr}_k^PH^l(X;\mathbb Q)$ is naturally identified with $H^{l-k}(\mathcal H_k)$.
Let us denote the induced isomorphism (of vector spaces) by $\Psi^{\mathbb H}:\mathbb H_\pi\to\mathbb H_{\pi^\phi}$.

\begin{theorem}[\cite{delignesplit}, \cite{fiveways}]
    A choice of a relatively ample line bundle $L$ induces a canonical splitting of the form in Theorem \ref{decomp}.
\end{theorem}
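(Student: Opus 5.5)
The plan follows Deligne's construction.
Fix a relatively ample $L$ with first Chern class $\eta=c_1(L)\in H^2(X;\mathbb Q)$. Cup product with $\eta$ gives a morphism $\eta:\mathbf R\pi_\ast\underline{\mathbb Q}\to\mathbf R\pi_\ast\underline{\mathbb Q}[2]$. Write $K=\mathbf R\pi_\ast\underline{\mathbb Q}[r+\dim B]$ and $P^k={}^{\mathfrak p}\mathcal H^k(K)$. The relative hard Lefschetz theorem (part of the package of \cite{bbdg}, \cite{saito}) says that $\eta^k:P^{-k}\to P^k$ is an isomorphism for each $k\ge 0$. This makes $\bigoplus_k P^k$ into a representation of $\mathfrak{sl}_2$, and hence gives primitive decompositions
\[P^{-k}=\bigoplus_{j\ge0}\eta^j\mathcal P^{-k-2j},\qquad \mathcal P^{-k}=\ker(\eta^{k+1}:P^{-k}\to P^{k+2}).\]
The goal is to produce a \emph{canonical} isomorphism $K\cong\bigoplus P^k[-k]$ that depends only on $\eta$.

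The construction I would follow is Deligne's (\cite{delignesplit}, in the form of \cite{fiveways}), by induction on the perverse amplitude.

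First, use the perverse truncation triangles ${}^{\mathfrak p}\tau_{\le k-1}K\to{}^{\mathfrak p}\tau_{\le k}K\to P^k[-k]\to$. Their connecting maps lie in $\operatorname{Hom}(P^k[-k],{}^{\mathfrak p}\tau_{\le k-1}K[1])$.

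Second, show that every such connecting map vanishes, so that a splitting exists. The argument factors the map through $\eta$: by hard Lefschetz, any morphism out of $P^{-k}[k]$ can be rewritten via $\eta^k$ as a morphism into the opposite degree. A degree count using $\operatorname{Hom}(P[a],Q[b])=0$ for perverse $P,Q$ and $b<a$ then kills the obstruction.

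Third, make the splitting canonical. Among all splittings, single out the unique one for which, in each degree, the primitive piece $\mathcal P^{-k}[k]\to K$ is characterized by two conditions: its composite with $\eta^{k+1}$ vanishes, and its projection to $P^{-k}$ is the inclusion of $\mathcal P^{-k}$. The $\eta$-translates of these pieces then give the whole decomposition. To see uniqueness, note that two maps $\mathcal P^{-k}[k]\to K$ with the same leading term differ by a map landing in ${}^{\mathfrak p}\tau_{\le -k-1}K$. Applying $\eta^{k+1}$ and using hard Lefschetz on the lower pieces, together with the vanishing of negative $\operatorname{Ext}$ between perverse sheaves, forces that difference to be zero. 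Existence is the same argument run inductively: correct an arbitrary lift by subtracting lower-order terms, one perverse degree at a time, until $\eta^{k+1}$ kills it.

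Because everything is determined by $\eta$ through the vanishing conditions, the resulting splitting is canonical. In particular, it is compatible with restriction to opens and with any isomorphism that intertwines the classes $\eta$. Finally, shift back from $K$ to $\mathbf R\pi_\ast\underline{\mathbb Q}$. This is harmless because ${}^{\mathfrak p}\mathcal H^k$ commutes with shifts up to reindexing.

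The main obstacle is the uniqueness and existence step in the third paragraph. There one needs a clean induction showing that the conditions ``$\eta^{k+1}$ kills the piece'' pin down the lift with no remaining ambiguity. This relies essentially on hard Lefschetz at every lower level, and on the fact that $\operatorname{Hom}$ from a perverse sheaf to a complex concentrated in strictly lower perverse degrees, shifted so that the degree difference is negative, vanishes. I would first carry this out in the $\mathfrak{sl}_2$ language, working degree by degree with the filtration ${}^{\mathfrak p}\tau_{\le\bullet}$. Only afterwards would I check compatibility with localization, since that compatibility is what the good-twist condition (ii) on $L$ will later be used for.
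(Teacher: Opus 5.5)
There is a genuine gap in your third step: the normalization you impose on the primitive lifts cannot be achieved in general. Suppose every lift $\phi_k\colon\mathcal P^{-k}[k]\to K$ satisfied $\eta^{k+1}\circ\phi_k=0$, and the other summands were the translates $\eta^j\circ\phi_k$. Then $\eta$ would act on $\bigoplus_i P^i[-i]$ exactly by ${}^{\mathfrak p}\mathcal H(\eta)$, so $(K,\eta)$ would be isomorphic to $\bigl(\bigoplus_i P^i[-i],{}^{\mathfrak p}\mathcal H(\eta)\bigr)$. This is false in general.

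To see this, take the equidimensional setting of the paper. There $K$ has perverse amplitude $[-r,r]$, so $\mathcal P^{-r}=P^{-r}$ and ${}^{\mathfrak p}\tau_{\le -r-1}K=0$. Hence the only lift with the prescribed leading term is the canonical map $\iota\colon{}^{\mathfrak p}\tau_{\le -r}K\to K$. Since $\underline{\mathbb Q}_B[\dim X]=\underline{\mathbb Q}_B[\dim B][r]$ lies in ${}^{\mathfrak p}D^{\le -r}$, the unit $1\in H^0(X;\mathbb Q)$ factors through $\iota$. Your condition $\eta^{r+1}\circ\iota=0$ would then force $c_1(L)^{r+1}=0$ in $H^{2r+2}(X;\mathbb Q)$. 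This fails whenever $L$ is ample on $X$, because $r+1\le\dim X$.

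A concrete case: take $X=B\times\mathbb P^1$ and $L=\mathcal O_{\mathbb P^1}(1)\boxtimes\mathcal O_B(1)$, and write $h,\kappa$ for the pulled-back hyperplane classes. Then $\eta^2\cdot 1=\kappa^2+2\kappa h\neq 0$, and there is no freedom in the lift at all.

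Your existence argument breaks exactly at this point. Hard Lefschetz makes $\eta^{k+1}\colon P^j\to P^{j+2k+2}$ split injective for $j\le -k-1$, which is why your uniqueness argument is correct. But it is not surjective, so the error term $\eta^{k+1}\circ f_0$ need not be absorbed by lower-order corrections.

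A canonical splitting has to use a weaker normalization, asking only that certain components of $\eta^n\circ(-)$ vanish. This is the kind of condition behind the decompositions of Deligne that the paper quotes without proof. One construction that works is the following.

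\begin{itemize}
\item Let $n$ be maximal with $P^{\pm n}\neq 0$, and put $\lambda={}^{\mathfrak p}\mathcal H(\eta^n)\colon P^{-n}\to P^n$.
\item The canonical map $\iota\colon P^{-n}[n]={}^{\mathfrak p}\tau_{\le -n}K\to K$ has the retraction $\lambda^{-1}\circ\mathrm{pr}\circ\eta^n$, where $\mathrm{pr}\colon K[2n]\to{}^{\mathfrak p}\tau_{\ge -n}(K[2n])=P^n[n]$.
\item Dually, the projection $K\to{}^{\mathfrak p}\tau_{\ge n}K=P^n[-n]$ has the section $\eta^n[-2n]\circ\iota[-2n]\circ\lambda^{-1}[-n]$.
\item A morphism together with a chosen retraction (resp.\ section) determines the direct-sum decomposition uniquely. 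So split off $P^{-n}[n]$, and then $P^n[-n]$ from the complement, canonically.
\item The middle summand, with the induced operator, still satisfies hard Lefschetz, so you can induct on $n$.
\end{itemize}

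This also replaces your vague second step, since the vanishing of the connecting maps is exactly the existence of these retractions and sections. Each step uses only $\eta$, the perverse t-structure and the hard Lefschetz isomorphisms. Hence the resulting splitting commutes with restriction to open subsets of $B$ and with isomorphisms intertwining the $\eta$'s, which is all the paper uses.

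A minor point: $K$ should be $\mathbf R\pi_\ast\operatorname{IC}_X=\mathbf R\pi_\ast\underline{\mathbb Q}[\dim X]$. This agrees with your shift $[r+\dim B]$ only when $r_\pi=\dim X-\dim B$, for instance in the equidimensional case.
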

\begin{remark}
    The cited sources actually provide multiple different ways to produce canonical splittings associated to a relatively ample bundle.
    (A consistent choice of) any one of them would work for our purpose.
\end{remark}
Such a splitting induces maps which we will denote by
\[\operatorname{inj}_k^L:\mathcal H_k[-k]\to\mathbf R\pi_\ast\underline{\mathbb Q}, \operatorname{proj}^L_k:\mathbf R\pi_\ast\underline{\mathbb Q}\to\mathcal H_k[-k].\]

One consequence of the canonicity of the splitting is that these maps are compatible with restrictions to an open subset of $B$.
So the isomorphism $\Psi=\Psi_{L,L^\phi}$ of the form (\ref{eq}) induced by the choice of $L,L^\phi$ does restrict to $\Psi^i$ (more precisely, $\Psi^i_{\mathbb Q}$) as described in Proposition \ref{decomp_TS} on each $U_i$.

In particular, the restriction of the diagram
\[
\begin{tikzcd}
    \mathbf R\pi_\ast\underline{\mathbb Q}\otimes\mathbf R\pi_\ast\underline{\mathbb Q}\rar\dar{\Psi\otimes\Psi}& \mathbf R\pi_\ast\underline{\mathbb Q}\dar{\Psi}\\
    \mathbf R\pi_\ast^\phi\underline{\mathbb Q}\otimes\mathbf R\pi_\ast^\phi\underline{\mathbb Q}\rar&\mathbf R\pi_\ast^\phi\underline{\mathbb Q}
\end{tikzcd}
\]
to each $U_i$ also commutes.
Note that we cannot conclude that the diagram commutes globally:
Maps between complexes are generally not determined locally.

Now, for each $k,k'$, we have a map
\[\begin{tikzcd}
    \mathcal H_k[-k]\otimes\mathcal H_{k'}[-k']\rar{\operatorname{inj}_k^L\otimes \operatorname{inj}_{k'}^L}&\mathbf R\pi_\ast\underline{\mathbb Q}\otimes\mathbf R\pi_\ast\underline{\mathbb Q}\rar&\mathbf R\pi_\ast\underline{\mathbb Q}\rar{\operatorname{proj}_{k+k'}^L}&\mathcal H_{k+k'}[-k-k']
\end{tikzcd}\]
which correspond to a map $\mathcal H_k\otimes\mathcal H_{k'}\to \mathcal H_{k+k'}$.
Let us call it $\operatorname{prod}_{k,k'}^L$.

Taking global cohomology on $\operatorname{prod}_{k,k'}^L$ gives a map
\[\mathbf R\Gamma(\mathcal H_k)\otimes \mathbf R\Gamma(\mathcal H_{k'})\to\mathbf R\Gamma(\mathcal H_k\otimes\mathcal H_{k'})\to\mathbf R\Gamma(\mathcal H_{k+k'})\]
which then induces a binary operation
\[b:H^{l-k}(\mathcal H_k)\otimes H^{l'-k'}(\mathcal H_{k'})\to H^{l+l'-k-k'}(\mathcal H_{k+k'})\]
for each $k,k',l,l'$.
\begin{proposition}
    The operation $b$ (restricted to pure tensors) coincides with the induced cup product $\operatorname{Gr}^P_kH^l(X;\mathbb Q)\times\operatorname{Gr}_{k'}^PH^{l'}(X;\mathbb Q)\to\operatorname{Gr}_{k+k'}^PH^{l+l'}(X;\mathbb Q)$.
\end{proposition}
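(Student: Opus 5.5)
Write $K=\mathbf R\pi_\ast\underline{\mathbb Q}$. We will compare both operations at the level of morphisms in $D^b_c(B)$ and only take global cohomology at the end. First we make the identification $\operatorname{Gr}_k^PH^l(X;\mathbb Q)\cong H^{l-k}(\mathcal H_k)$ explicit. Since ${}^{\mathfrak p}\tau_{\le k+\dim B}K\cong\bigoplus_{j\le k}\mathcal H_j[-j]$ via the splitting $\operatorname{inj}^L$, we have
\[P_kH^l=\sum_{j\le k}\operatorname{inj}^L_j\bigl(H^l(\mathcal H_j[-j])\bigr).\]
The projection $\operatorname{proj}^L_k$ kills $\operatorname{inj}^L_j$ for $j<k$. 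Hence $\operatorname{proj}^L_k$ induces the isomorphism $\operatorname{Gr}^P_kH^l\to H^{l-k}(\mathcal H_k)$, with inverse $\alpha\mapsto[\operatorname{inj}^L_k(\alpha)]$. We must also check that this agrees with the canonical identification via ${}^{\mathfrak p}\mathcal H$. This holds because $\operatorname{proj}^L_k$ restricted to ${}^{\mathfrak p}\tau_{\le k+\dim B}K$ is the canonical map to ${}^{\mathfrak p}\mathcal H^{k+\dim B}$, which is independent of the splitting.

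Now take $\alpha\in H^{l-k}(\mathcal H_k)$ and $\beta\in H^{l'-k'}(\mathcal H_{k'})$, with lifts $\tilde\alpha=\operatorname{inj}^L_k\alpha\in P_kH^l$ and $\tilde\beta=\operatorname{inj}^L_{k'}\beta\in P_{k'}H^{l'}$. The global cup product $H^l(B,K)\otimes H^{l'}(B,K)\to H^{l+l'}(B,K)$ is obtained by applying $\mathbf R\Gamma$ to the sheaf-level product $m:K\otimes K\to K$ together with the Künneth-type map $\mathbf R\Gamma(A)\otimes\mathbf R\Gamma(C)\to\mathbf R\Gamma(A\otimes C)$. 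This map is natural in $A$ and $C$, and we use that naturality to commute it past $\operatorname{inj}^L_k\otimes\operatorname{inj}^L_{k'}$. This gives
\[\tilde\alpha\smile\tilde\beta=\mathbf R\Gamma\bigl(m\circ(\operatorname{inj}^L_k\otimes\operatorname{inj}^L_{k'})\bigr)(\alpha\otimes\beta).\]
By multiplicativity (condition (iii)), $\tilde\alpha\smile\tilde\beta\in P_{k+k'}H^{l+l'}$. Its class in $\operatorname{Gr}_{k+k'}^P$ is therefore computed by applying $\operatorname{proj}^L_{k+k'}$, using the first paragraph. Naturality of $\mathbf R\Gamma$ in the target then shows that this class equals $b(\alpha\otimes\beta)$.

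The main obstacle is the first step: ensuring that $\operatorname{proj}^L_{k+k'}$ computes the graded piece correctly for an element that is merely known to lie in $P_{k+k'}$. The components of $\tilde\alpha\smile\tilde\beta$ along $\mathcal H_j$ with $j>k+k'$ need not vanish at the level of morphisms. They do vanish after passing to global cohomology, however. Indeed, the element is in the image of $H(\bigoplus_{j\le k+k'}\mathcal H_j[-j])$, and since the splitting is a direct sum, global cohomology of the decomposition is the direct sum of the global cohomologies. The components with $j>k+k'$ are thus zero, and the components with $j<k+k'$ are killed by $\operatorname{proj}^L_{k+k'}$. With this, the argument above needs no condition on local systems. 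That condition is only needed later, when comparing $\operatorname{prod}$ for $X$ and $X^\phi$.
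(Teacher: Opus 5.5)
Your proof is correct and follows essentially the same route as the paper. The paper packages the lifts via $\mathcal Q_k={}^{\mathfrak p}\tau_{\le k+\dim B}\mathbf R\pi_\ast\underline{\mathbb Q}$ and the retraction $\operatorname{ret}_{k+k'}^L$, using multiplicativity to factor the global cup product through $\mathbf R\Gamma(\mathcal Q_{k+k'})$ and then passing to $\mathcal Q_{k+k'}/\mathcal Q_{k+k'-1}=\mathcal H_{k+k'}$; your argument that the components with $j>k+k'$ vanish on global cohomology, followed by $\operatorname{proj}^L_{k+k'}$, is an unpacked version of exactly this step.
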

\begin{proof}
    Let $\mathcal Q_k=\mathcal H_0\oplus\cdots\oplus\mathcal H_k[-k]={}^{\mathfrak p}\tau_{\le k+\dim B}(\mathbf R\pi_\ast\underline{\mathbb Q})$.
    In this case, the natural map associated to the perverse truncation can be written as
    \[\operatorname{filt}_k:\operatorname{inj}_0^L\oplus\cdots\oplus\operatorname{inj}_k^L:\mathcal Q_k\to\mathbf R\pi_\ast\mathbb Q.\]
    
    Since $\operatorname{inj}^L$ and $\operatorname{proj}^L$ both come from a splitting, $\operatorname{filt}_k$ has a retraction given by
    \[\operatorname{ret}_k^L=\operatorname{proj}_0^L\oplus\cdots\oplus\operatorname{proj}_k^L:\mathbf R\pi_\ast\mathbb Q\to \mathcal Q_k\]
    which depends on $L$.

    Consider now the map $\operatorname{fprod}_{k,k'}$ given by the composition
    \[\begin{tikzcd}
        \mathcal Q_k\otimes\mathcal Q_{k'}\rar{\operatorname{filt}_k^L\otimes \operatorname{filt}_{k'}^L}&\mathbf R\pi_\ast\underline{\mathbb Q}\otimes\mathbf R\pi_\ast\underline{\mathbb Q}\rar&\mathbf R\pi_\ast\underline{\mathbb Q}.
    \end{tikzcd}\]
    This map does not necessarily factor through $\operatorname{filt}_{k+k'}:\mathcal Q_{k+k'}\to\mathbf R\pi_\ast\underline{\mathbb Q}$.
    However, if one were to pass this to cohomology, then
    \[\mathbf R\Gamma(\operatorname{filt}_k):\mathbf R\Gamma(\mathcal Q_k)\to \mathbf R\Gamma(\mathbf R\pi_\ast\underline{\mathbb Q})\]
    is exactly the inclusion of the filtered piece $P_kH^\ast(X;\mathbb Q)\subset H^\ast(X;\mathbb Q)$.
    Since we have assumed that the perverse filtration is multiplicative, the cohomological realization can then be factored as
    \[\begin{tikzcd}
        \mathbf R\Gamma(\mathcal Q_k)\otimes\mathbf R\Gamma(\mathcal Q_{k'})\rar\arrow[swap,dashed]{drr}{\operatorname{cup}_{k,k'}}&\mathbf R\Gamma(\mathbf R\pi_\ast\underline{\mathbb Q})\otimes\mathbf R\Gamma(\mathbf R\pi_\ast\underline{\mathbb Q})\rar&\mathbf R\Gamma(\mathbf R\pi_\ast\underline{\mathbb Q})\\
        &&\mathbf R\Gamma(\mathcal Q_{k+k'})\uar[swap]{\mathbf R\Gamma(\operatorname{filt}_{k+k'})}
    \end{tikzcd}\]
    
    Now since $\operatorname{ret}_{k+k'}$ is a retraction of $\operatorname{filt}_{k+k'}$, we know that $\mathbf R\Gamma(\operatorname{ret}_{k+k'})$ is a retraction of $\mathbf R\Gamma(\operatorname{filt}_{k+k'})$, so
    \begin{align*}
        \operatorname{cup}_{k,k'}&=\mathbf R\Gamma(\operatorname{ret}_{k+k'})\circ\mathbf R\Gamma(\operatorname{fprod}_{k,k'})=\mathbf R\Gamma(\operatorname{ret}_{k+k'}\circ\operatorname{fprod}_{k,k'})
    \end{align*}
    which shows the claim since we can naturally identify $\mathcal Q_\ast/\mathcal Q_{\ast-1}=\mathcal H_\ast$.
\end{proof}
Doing the same thing for the twisted fibration $\pi^\phi$ gives another map $\operatorname{prod}_{k,k'}^{L^\phi}$.
The key observation here is the following:
\begin{proposition}
    We have $\operatorname{prod}_{k,k'}^L=\operatorname{prod}_{k,k'}^{L^\phi}$ given that at least one of $\mathcal H_k,\mathcal H_{k'}$, and $\mathcal H_{k+k'}$ is a local system.
\end{proposition}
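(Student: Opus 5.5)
The plan is to compare the two maps $\operatorname{prod}_{k,k'}^L$ and $\operatorname{prod}_{k,k'}^{L^\phi}$ as morphisms $\mathcal H_k\otimes\mathcal H_{k'}\to\mathcal H_{k+k'}$. Here the $\mathcal H$'s on both sides are identified via the isomorphisms $\Psi_k$ of Proposition \ref{decomp_TS}. The first step is local. On each $U_i$, the canonical splittings attached to $L$ and $L^\phi$ are compatible with restriction, and $L|_{\pi^{-1}U_i}\cong L^\phi|_{(\pi^\phi)^{-1}U_i}$ by goodness (ii). Hence $\psi^i$ intertwines $\operatorname{inj}^L,\operatorname{proj}^L$ with $\operatorname{inj}^{L^\phi},\operatorname{proj}^{L^\phi}$. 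The cup product on $\mathbf R\pi_\ast\underline{\mathbb Q}|_{U_i}$ comes from the isomorphism of spaces $\pi^{-1}U_i\cong(\pi^\phi)^{-1}U_i$, so it is also intertwined by $\psi^i$. Composing these, $\operatorname{prod}^L_{k,k'}|_{U_i}=\operatorname{prod}^{L^\phi}_{k,k'}|_{U_i}$ for every $i$, once we use that $\Psi_k|_{U_i}$ is the cohomology of $\psi^i$.

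The second step is to globalize, and this is where the local system hypothesis enters. The difference $\delta=\operatorname{prod}^L-\operatorname{prod}^{L^\phi}$ is a morphism in $D^b(B)$ whose restriction to every member of an open cover vanishes. In general such a morphism need not vanish, since $\operatorname{Hom}$ is not a sheaf on $D^b(B)$. I would show that in the present situation the relevant $\operatorname{Hom}$ is a sheaf, namely that it equals $H^0$ of an $\mathbf R\mathcal Hom$ with no negative cohomology sheaves. By adjunction, a morphism $\mathcal H_k\otimes\mathcal H_{k'}\to\mathcal H_{k+k'}$ is a morphism $\mathcal H_k\to\mathbf R\mathcal Hom(\mathcal H_{k'},\mathcal H_{k+k'})$, or one of the analogous forms obtained by moving a local system factor to the other side.

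The main obstacle is the case analysis. If $\mathcal H_{k'}$ is a local system $\mathcal L$ (shifted so it sits in degree $0$), then $\mathbf R\mathcal Hom(\mathcal L,\mathcal H_{k+k'})=\mathcal L^\vee\otimes\mathcal H_{k+k'}$. So $\delta$ becomes a morphism $\mathcal H_k\otimes\mathcal L\to \mathcal H_{k+k'}$ between objects of $\operatorname{Perv}(B)[-\dim B]$, because tensoring with a local system preserves perversity. Morphisms between (shifted) perverse sheaves form a stack, since perverse sheaves glue (\cite[\S3.7]{achar}) and negative $\operatorname{Ext}$'s vanish. Local vanishing of $\delta$ therefore gives global vanishing. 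If $\mathcal H_k$ is the local system, the argument is symmetric. If $\mathcal H_{k+k'}$ is the local system, I would dualize: $\delta$ corresponds to a map $\mathcal H_k\to\mathbf R\mathcal Hom(\mathcal H_{k'},\mathcal H_{k+k'})=\mathbb D(\mathcal H_{k'})\otimes\mathcal H_{k+k'}^{\vee}$ up to shift. Verdier duality sends perverse sheaves to perverse sheaves, so the target again lies in $\operatorname{Perv}(B)[-\dim B]$, and the same stack argument applies.

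The delicate point to check is that the shifts line up. The $\mathcal H$'s are perverse up to the uniform shift $[-\dim B]$, so $\mathcal H_k\otimes\mathcal L$ is perverse up to that same shift. In the dual case, $\mathbb D$ introduces a shift of $2\dim B$, and I need to confirm this combines correctly with the $[-\dim B]$ normalization.
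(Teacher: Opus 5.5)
Your overall route is the paper's. You show the two maps agree on each $U_i$, then rewrite the morphism as one between objects of $\operatorname{Perv}(B)[-\dim B]$, where morphisms are determined locally. Your cases where $\mathcal H_k$ or $\mathcal H_{k'}$ is a local system are fine; the paper calls these clear.

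The gap is in the case where $\mathcal H_{k+k'}$ is a local system. You assert that $\mathbf R\underline{\operatorname{Hom}}(\mathcal H_{k'},\mathcal H_{k+k'})$ is, up to shift, $\mathbb D(\mathcal H_{k'})$ tensored with a local system. (The local system is $\mathcal H_{k+k'}$ itself, not its dual, but that slip is harmless.) Working locally with $\mathcal H_{k+k'}=\underline{\mathbb Q}$, this is the claim $\mathbf R\underline{\operatorname{Hom}}(K,\underline{\mathbb Q}_B)\cong\mathbb DK[-2\dim B]$. That holds precisely when the dualizing complex $\omega_B=\mathbb D\underline{\mathbb Q}_B$ is $\underline{\mathbb Q}_B[2\dim B]$, i.e.\ when $\underline{\mathbb Q}_B[\dim B]\cong\operatorname{IC}_B$.

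In this section $B$ is only assumed normal, and ``Verdier duality preserves perversity'' does not rescue the step otherwise. Take a normal surface $B$ with a point $x$ whose link $L$ has $H^1(L;\mathbb Q)\neq0$, for instance the vertex of a cone over an elliptic curve, and let $i_x\colon\{x\}\hookrightarrow B$. Then $K=i_{x\ast}\mathbb Q[-2]$ lies in $\operatorname{Perv}(B)[-2]$. But $\mathbf R\underline{\operatorname{Hom}}(K,\underline{\mathbb Q}_B)=i_{x\ast}i_x^!\underline{\mathbb Q}_B[2]$ has $H^2_{\{x\}}(B;\mathbb Q)\cong H^1(L;\mathbb Q)$ in degree $0$, so it is not in $\operatorname{Perv}(B)[-2]$.

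The point you flagged as delicate, the shift bookkeeping, is routine: $K\in\operatorname{Perv}(B)[-\dim B]$ gives $\mathbb DK[-2\dim B]\in\operatorname{Perv}(B)[-\dim B]$. The real content is identifying $\omega_B$.

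The missing input, which the paper's proof invokes explicitly, is that $B$ is a rational homology manifold because it is the base of an equidimensional projective fibration with smooth total space. One way to see this:
\begin{enumerate}
\item $\underline{\mathbb Q}_B$ is a direct summand of $\mathbf R\pi_\ast\underline{\mathbb Q}_X$. Take the adjunction unit and retract it, up to the nonzero scalar $\int_{X_b}\eta^r$, by $\eta^r$ followed by $\mathbf R\pi_\ast\underline{\mathbb Q}_X[2r]\to R^{2r}\pi_\ast\underline{\mathbb Q}_X\to\underline{\mathbb Q}_B$. The last map is defined over the smooth locus of $\pi$ and extended using normality of $B$.
\item Equidimensionality makes the defect of semismallness equal to $r=\dim X-\dim B$. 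Hence $\mathbf R\pi_\ast\underline{\mathbb Q}_X[\dim X]$ has perverse amplitude $[-r,r]$.
\item This forces $\underline{\mathbb Q}_B[\dim B]$ to be a semisimple perverse sheaf. Its only cohomology sheaf sits in degree $-\dim B$, so it has no summand supported on a proper closed subset, and therefore it equals $\operatorname{IC}_B$.
\end{enumerate}
With this added, your argument is complete and agrees with the paper's.
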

\begin{proof}
    We have already seen that the two maps are the same after restricting to $U_i$ for each $i$.
    As such, it suffices to show that any map $\mathcal H_k\otimes\mathcal H_{k'}\to \mathcal H_{k+k'}$ canonically corresponds to a map between perverse sheaves, for then it is determined locally.
    This is clear if either one of $\mathcal H_k,\mathcal H_{k'}$ is a local system.
    If $\mathcal H_{k+k'}$ is a local system, we identify:
    \begin{align*}
        \operatorname{Hom}(\mathcal H_k\otimes\mathcal H_{k'}, \mathcal H_{k+k'})&=\operatorname{Hom}(\mathcal H_{k'},\mathbf R\underline{\operatorname{Hom}}(\mathcal H_k,\mathcal H_{k+k'}))
    \end{align*}
    The claim would follow if $\mathbf R\underline{\operatorname{Hom}}(\mathcal H_k,\mathcal H_{k+k'})\in\operatorname{Perv}(B)[-\dim B]$.
    Without loss of generality, we may assume $\mathcal H_{k+k'}=\underline{\mathbb Q}$.
    Since $B$ is the base of an equidimensional fibration, we have $\operatorname{IC}_B[-\dim B]\cong\underline{\mathbb Q}$.
    Therefore
    \[\mathbf R\underline{\operatorname{Hom}}(\mathcal H_k,\mathcal H_{k+k'})=(\mathbb D\mathcal H_k)[-2\dim B]\in\operatorname{Perv}(B)[-\dim B].\qedhere\]
\end{proof}
Now fix an ample class $\kappa$ on $B$.
We may view it as an element of $H^2(\mathcal H_0)$ which we have identified with $\operatorname{Gr}^P_0H^2(X;\mathbb Q)$ as well as $\operatorname{Gr}^P_0H^2(X^\phi;\mathbb Q)$.
\begin{corollary}\label{intersectionform}
    The isomorphism of vector spaces $\Psi^{\mathbb H}:\mathbb H_\pi\to\mathbb H_{\pi^\phi}$ preserves:

    (a) the endomorphism $\kappa\smile^{\mathbb H}-$, and

    (b) the intersection pairing
    \[\langle\alpha,\beta\rangle_{\mathbb H}=\int\alpha\smile^{\mathbb H}\beta\]
    where the integral is the projection to $\operatorname{Gr}^P_{2r}H^{2\dim X}$.
\end{corollary}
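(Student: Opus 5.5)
Both (a) and (b) reduce to the preceding proposition, applied in the case where one of the perverse pieces involved is a local system. Then the binary operation $b$ computed from $\operatorname{prod}^L$ agrees with the one computed from $\operatorname{prod}^{L^\phi}$ under $\Psi^{\mathbb H}$. By the proposition identifying $b$ with the induced cup product on $\operatorname{Gr}^P$, this agreement is exactly the statement that $\Psi^{\mathbb H}$ intertwines $\smile^{\mathbb H}$ on the relevant graded pieces. It remains to check in each case that one of the three sheaves is a local system.

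For (a), $\kappa$ lives in $H^2(\mathcal H_0)$, and multiplication by $\kappa$ on $\mathbb H_\pi$ is the operation $b$ built from $\operatorname{prod}_{0,k}:\mathcal H_0\otimes\mathcal H_k\to\mathcal H_k$. So I need $\mathcal H_0$ to be a local system. Since $\pi$ is equidimensional with connected fibers and $B$ is normal, ${}^{\mathfrak p}\tau_{\le\dim B}\mathbf R\pi_\ast\underline{\mathbb Q}$ should be $\underline{\mathbb Q}_B$ (suitably shifted), i.e.\ $\mathcal H_0=\underline{\mathbb Q}$. This is also the fact $\operatorname{IC}_B[-\dim B]\cong\underline{\mathbb Q}$ used in the previous proof. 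I would justify it via $R^0\pi_\ast\underline{\mathbb Q}=\underline{\mathbb Q}$ together with the lowest perverse piece being the constant sheaf for equidimensional fibrations. With this in hand, $\operatorname{prod}^L_{0,k}=\operatorname{prod}^{L^\phi}_{0,k}$. The remaining check is that $\kappa$ corresponds to the same element on both sides. This holds because $\Psi$ restricted to $\mathcal H_0=\underline{\mathbb Q}$ is the identity: it is locally induced by isomorphisms over $U_i$, which act as the identity on $\pi^\ast$ of base classes. So $\kappa$ is simply $\pi^\ast\kappa=(\pi^\phi)^\ast\kappa$ in both $\operatorname{Gr}^P_0$.

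For (b), the pairing $\operatorname{Gr}^P_kH^l\times\operatorname{Gr}^P_{2r-k}H^{2\dim X-l}\to\operatorname{Gr}^P_{2r}H^{2\dim X}$ comes from $\operatorname{prod}_{k,2r-k}:\mathcal H_k\otimes\mathcal H_{2r-k}\to\mathcal H_{2r}$. By relative hard Lefschetz (from $L$), $\mathcal H_{2r}\cong\mathcal H_0(-r)$, a rank one local system, so the preceding proposition applies and the products agree. I also need the integration map $H^{2\dim B}(\mathcal H_{2r})\to\mathbb Q$ to match on both sides. For this, $\mathcal H_{2r}$ should be identified with $R^{2r}\pi_\ast\underline{\mathbb Q}$ restricted to where it is a local system, given by top-degree fiber cohomology, i.e.\ the fiber fundamental class. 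The twist isomorphisms preserve fiber orientations, so the local isomorphisms $\psi^i$ respect the trace map $\operatorname{tr}:\mathcal H_{2r}\to\underline{\mathbb Q}(-r)$. Composing with the trace on $B$ then shows that the projections to $\operatorname{Gr}^P_{2r}H^{2\dim X}=H^{2\dim B}(\mathcal H_{2r})\cong\mathbb Q$ correspond. Finally, gluing the traces: a map between rank one local systems is determined locally, so agreement on each $U_i$ suffices.

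The main obstacle is the identification of $\mathcal H_0$ and $\mathcal H_{2r}$ as constant (Tate-twisted) local systems when $B$ is only normal and fibers may be reducible. I would settle this first via the truncation description ${}^{\mathfrak p}\tau_{\le\dim B}$ and the fact that $\operatorname{IC}_B[-\dim B]=\underline{\mathbb Q}$ forced by equidimensionality, which the paper already uses. I would then deduce the $\mathcal H_{2r}$ case by Verdier duality together with relative hard Lefschetz.
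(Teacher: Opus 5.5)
Your proposal is correct and takes the same route as the paper. The paper's proof is the one-liner ``take $k=0$ for (a) and $k+k'=2r$ for (b) in the preceding proposition,'' which implicitly uses that $\mathcal H_0\cong\underline{\mathbb Q}$ and $\mathcal H_{2r}\cong\mathcal H_0(-r)$ are local systems. Your extra checks fill in details the paper leaves tacit but later relies on when it passes to the $\mathbb Q$-valued pairing $\int\alpha\smile\beta$: that $\kappa$ corresponds to $\pi^\ast\kappa$ and $(\pi^\phi)^\ast\kappa$, and that $\Psi$ on $\mathcal H_{2r}$ is compatible with the fiber trace to $\underline{\mathbb Q}(-r)$.
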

\begin{proof}
    Choosing $k=0$ (resp.~$k+k'=2r$) in the preceding proposition yields (a) (resp.~(b)).
\end{proof}
\subsection{The relative Lefschetz action}\label{ss:lefschetz}
There is another endomorphism that is preserved by $\Psi^{\mathbb H}$.
The relatively ample line bundle $L$ induces a map $\mathbf R\pi_\ast\underline{\mathbb Q}\to \mathbf R\pi_\ast\underline{\mathbb Q}[2]$ which gives a map $\eta:\mathcal H_0\to\mathcal H_2$.
Denote by $\mathbb L_k$ the composition
\[\begin{tikzcd}
    \mathcal H_k=\mathcal H_k\otimes\mathcal H_0\rar{\operatorname{id}\otimes\eta}&\mathcal H_k\otimes\mathcal H_2\rar{\operatorname{prod}_{k,2}}&\mathcal H_{k+2}.
\end{tikzcd}\]

Similarly, we get $\eta^\phi$ and $\mathbb L_k^\phi$ by doing this for $L^\phi$.

\begin{lemma}
    We have $\mathbb L_\bullet=\mathbb L_\bullet^\phi$.
\end{lemma}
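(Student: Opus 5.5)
The plan is to prove $\eta=\eta^\phi$ as maps $\mathcal H_0\to\mathcal H_2$. Combined with the preceding proposition this gives the lemma: $\mathcal H_0$ is a local system, so $\operatorname{prod}_{k,2}^L=\operatorname{prod}_{k,2}^{L^\phi}$, and hence $\mathbb L_k=\operatorname{prod}_{k,2}\circ(\operatorname{id}\otimes\eta)$ agrees with $\mathbb L_k^\phi$.

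Since $\pi$ has connected fibers, $\mathcal H_0=\pi_\ast\underline{\mathbb Q}=\underline{\mathbb Q}_B$ is the constant sheaf. Therefore
\[\operatorname{Hom}(\mathcal H_0,\mathcal H_2)=\operatorname{Hom}(\underline{\mathbb Q}_B,\mathcal H_2)=H^0(B,\mathcal H_2),\]
and sections of a sheaf are determined locally. More precisely, $\mathcal H_2[\dim B]$ is perverse, so $\mathcal H_2$ lies in nonnegative degrees, and $U\mapsto\operatorname{Hom}(\underline{\mathbb Q}_U,\mathcal H_2|_U)=H^0(U,\mathcal H_2)$ is the sheaf $\mathcal H^0(\mathcal H_2)$. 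It therefore suffices to check $\eta|_{U_i}=\eta^\phi|_{U_i}$ under the identification $\Psi$, which restricts to $\psi^i$ on each $U_i$ by Proposition \ref{decomp_TS} and the canonicity of the splittings.

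On $U_i$ the map $\eta$ is the composite
\[\mathcal H_0\xrightarrow{\operatorname{inj}_0^L}\mathbf R\pi_\ast\underline{\mathbb Q}\xrightarrow{c_1(L)\smile}\mathbf R\pi_\ast\underline{\mathbb Q}[2]\xrightarrow{\operatorname{proj}_2^L}\mathcal H_2,\]
and similarly for $\eta^\phi$. By condition (ii) of goodness, the isomorphism $\pi^{-1}U_i\cong(\pi^\phi)^{-1}U_i$ carries $L|_{\pi^{-1}U_i}$ to $L^\phi|_{(\pi^\phi)^{-1}U_i}$. Hence $\psi^i$ intertwines the two cup products with $c_1$, and it also matches the canonical splittings built from $L$ and $L^\phi$, since those splittings are canonical and compatible with restriction. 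The two composites therefore agree on $U_i$.

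The main subtlety is bookkeeping. One has to make sure that the identification of $\mathcal H_0$ and $\mathcal H_2$ used in the lemma is exactly the glued $\Psi$, and not some other local choice. Any ambiguity there is absorbed by Proposition \ref{decomp_TS}, which shows the transition automorphisms act trivially on the perverse cohomology sheaves.
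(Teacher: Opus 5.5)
\textbf{There is a gap.} Your local computation on each $U_i$ is fine. So is your gluing argument for $\eta$: $\operatorname{Hom}(\underline{\mathbb Q}_B,\mathcal H_2)=\Gamma(B,\mathcal H^0(\mathcal H_2))$ is indeed determined locally. The gap is in the first paragraph, where you pass from $\eta=\eta^\phi$ to $\mathbb L_k=\mathbb L_k^\phi$ by asserting $\operatorname{prod}_{k,2}^L=\operatorname{prod}_{k,2}^{L^\phi}$ ``because $\mathcal H_0$ is a local system''. The preceding proposition, applied to $\operatorname{prod}_{k,2}\colon\mathcal H_k\otimes\mathcal H_2\to\mathcal H_{k+2}$, requires one of $\mathcal H_k$, $\mathcal H_2$, $\mathcal H_{k+2}$ to be a local system. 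The sheaf $\mathcal H_0$ does not enter $\operatorname{prod}_{k,2}$ at all.

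In general none of those three sheaves is a local system. For $\dim B\ge 2$ and $k=1$, they restrict on the smooth locus to $\wedge^jR^1\pi_\ast\underline{\mathbb Q}$, which has nontrivial monodromy, and they may also carry summands supported on the discriminant. So $\mathcal H_k\otimes\mathcal H_2$ is not a shifted perverse sheaf, and $\operatorname{Hom}(\mathcal H_k\otimes\mathcal H_2,\mathcal H_{k+2})$ is not determined locally. Global agreement of the two products is therefore not available, which is exactly the point the paper stresses. As written, your argument only covers those $k$ where the proposition applies, such as $k=0$ or $k+2=2r$, not all of $\mathbb L_\bullet$.

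\textbf{Repair.} Glue the composite rather than its factors. Each $\mathbb L_k$ is a morphism $\mathcal H_k\to\mathcal H_{k+2}$ in $\operatorname{Perv}(B)[-\dim B]$. Since negative $\operatorname{Ext}$'s between perverse sheaves vanish, such morphisms form a sheaf on $B$, so $\mathbb L_k$ is determined by its restrictions to the $U_i$. On each $U_i$, $\operatorname{prod}_{k,2}^L$ and $\operatorname{prod}_{k,2}^{L^\phi}$ do agree: the cup-product square commutes over $U_i$, and the canonical splittings are compatible with restriction. Also $\eta|_{U_i}=\eta^\phi|_{U_i}$ by your own local argument. Hence $\mathbb L_k|_{U_i}=\mathbb L_k^\phi|_{U_i}$ for every $i$, and so $\mathbb L_k=\mathbb L_k^\phi$ globally. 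This is precisely the paper's proof. The only role of $\mathcal H_0=\underline{\mathbb Q}$ here is that the source $\mathcal H_k\otimes\mathcal H_0$ of $\mathbb L_k$ is again the (shifted) perverse sheaf $\mathcal H_k$.
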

\begin{proof}
    These are maps between objects of $\operatorname{Perv}(B)[-\dim B]$, so one can check their coincidence locally.
    The latter follows from the fact that $L|_{\pi^{-1}U_i},L^\phi|_{(\pi^\phi)^{-1}U_i}$ are isomorphic for each $i$.
\end{proof}
If one were to pass $\mathbb L_\bullet$ to cohomology, one would get an endomorphism of $\mathbb H_\pi$.
\begin{proposition}\label{lef_action}
    (a) The endomorphism that $\mathbb L_\bullet$ defines on $\mathbb H_\pi$ is $\eta\smile^{\mathbb H}-$, where $\eta$ is viewed as a class in $\operatorname{Gr}^P_2H^2(X;\mathbb Q)$ via
    \[\operatorname{Hom}(\mathcal H_0,\mathcal H_2)=H^0(\mathcal H_2)=\operatorname{Gr}^P_2H^2(X;\mathbb Q).\]

    (b) The isomorphism $\Psi^{\mathbb H}$ conjugates the operators $\eta\smile^{\mathbb H}-$ and $\eta^\phi\smile^{\mathbb H}-$.
\end{proposition}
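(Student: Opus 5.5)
Part (a) reduces to the identification of $b$ with the graded cup product, proved in the Proposition above. I would start by realizing $\eta$ as a morphism $\mathcal H_0\to\mathcal H_2$. The class $c_1(L)\in H^2(X;\mathbb Q)$ gives a map $\underline{\mathbb Q}_B\to\mathbf R\pi_\ast\underline{\mathbb Q}[2]$. Via the unit $\underline{\mathbb Q}_B\to\mathbf R\pi_\ast\underline{\mathbb Q}$ and $\operatorname{inj}^L_0$, we may identify $\mathcal H_0=\underline{\mathbb Q}_B$; this uses equidimensionality and connected fibers, so that $\mathcal H_0=\mathbf R^0\pi_\ast\underline{\mathbb Q}=\underline{\mathbb Q}$. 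The map $\eta$ is then $\operatorname{proj}^L_2\circ(\cup c_1(L))\circ\operatorname{inj}^L_0$, regarded as an element of $\operatorname{Hom}(\mathcal H_0,\mathcal H_2)=H^0(\mathcal H_2)$. This element is exactly the image of $c_1(L)$ under $P_2H^2\to\operatorname{Gr}^P_2H^2=H^0(\mathcal H_2)$; note that $c_1(L)\in P_2H^2$ automatically, since $P_2H^2=H^2$.

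Next I would compute $\mathbb L_k$ on global cohomology. By definition $\mathbb L_k=\operatorname{prod}_{k,2}\circ(\operatorname{id}\otimes\eta)$. Applying $\mathbf R\Gamma$ and using the factorization
\[\mathbf R\Gamma(\mathcal H_k)=\mathbf R\Gamma(\mathcal H_k)\otimes\mathbf R\Gamma(\mathcal H_0)\to\mathbf R\Gamma(\mathcal H_k)\otimes\mathbf R\Gamma(\mathcal H_2)\to\mathbf R\Gamma(\mathcal H_{k+2}),\]
the induced map on $H^{l-k}(\mathcal H_k)$ sends $\alpha\mapsto b(\alpha,\eta(1))$. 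Here $1\in H^0(\mathcal H_0)$ and $\eta(1)=\eta\in H^0(\mathcal H_2)$. One has to check that the map $\mathbf R\Gamma(A)\otimes\mathbf R\Gamma(B)\to\mathbf R\Gamma(A\otimes B)$ is natural in $B$ and is compatible with the unit $\mathbf R\Gamma(\underline{\mathbb Q})\ni 1$; this is routine lax monoidality of $\mathbf R\Gamma$. By the Proposition identifying $b$ with the induced cup product, $b(\alpha,\eta)=\alpha\smile^{\mathbb H}\eta$. Graded commutativity then gives $\alpha\smile^{\mathbb H}\eta=\eta\smile^{\mathbb H}\alpha$, since $\eta$ has even degree, and this proves (a).

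For (b), the Lemma gives $\mathbb L_\bullet=\mathbb L^\phi_\bullet$ as maps of objects identified via Proposition \ref{decomp_TS}. The same holds after applying $\mathbf R\Gamma$, and $\Psi^{\mathbb H}$ is by definition $\mathbf R\Gamma$ of these identifications. Hence $\Psi^{\mathbb H}\circ(\eta\smile^{\mathbb H}-)=(\eta^\phi\smile^{\mathbb H}-)\circ\Psi^{\mathbb H}$, applying (a) to both $\pi$ and $\pi^\phi$.

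I expect the main obstacle to be the compatibility bookkeeping in the middle step: that $\mathcal H_k\otimes\mathcal H_0\cong\mathcal H_k$ together with $\eta$ really corresponds to inserting the global class $\eta$ into $b$. In particular, $\operatorname{prod}_{k,0}$ must be the identity (up to the unit identification) for the canonical splitting. The first thing I would try is to verify that $\operatorname{inj}^L_0$ equals the adjunction unit. Since $\mathcal H_0$ is the lowest perverse piece, $\operatorname{Hom}(\mathcal H_0,\mathcal H_j[-j])=0$ for $j>0$ by perverse degree reasons. Consequently any splitting's $\operatorname{inj}_0$ is forced to be ${}^{\mathfrak p}\tau_{\le\dim B}\to\mathbf R\pi_\ast$, which is the unit.
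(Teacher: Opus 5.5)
Your proposal is correct and follows the paper's route. The paper's proof only says that both parts follow from the preceding discussion: (a) from the Proposition identifying $b$ with the graded cup product, and (b) from the Lemma $\mathbb L_\bullet=\mathbb L^\phi_\bullet$ combined with (a) for $\pi$ and $\pi^\phi$. You have made explicit the bookkeeping the paper leaves implicit: $\operatorname{inj}^L_0$ is the unit because negative Ext's vanish in the heart, $\mathbf R\Gamma$ is lax monoidal, and graded commutativity applies since $\eta$ has degree $2$. One small point: $\mathcal H_0\cong\underline{\mathbb Q}_B$ needs a little more than connected fibres, for instance relative hard Lefschetz $\mathcal H_0\cong\mathcal H_{2r}$, the latter being a single sheaf for dimension reasons, though the paper also takes this for granted.
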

\begin{proof}
    These already follow from the existing discussions.
\end{proof}
\section{Lagrangian fibrations}
\subsection{Recollections on Lagrangian fibrations}\label{ss:rec_lagfib}
Let us now specialize to the situation where $X$ is an irreducible hyper-K\"ahler variety.
\begin{definition}
    A compact K\"ahler manifold $X$ is irreducible hyper-K\"ahler if it is holomorphic symplectic and
    \[h^1(X)=0,h^{2,0}(X)=1.\]
    Fibrations whose total space is irreducible hyper-K\"ahler are called Lagrangian fibrations.
\end{definition}
\begin{remark}
    Any irreducible hyper-K\"ahler manifold as defined above is simply connected \cite{schwald}.
    So our definition here matches with the classical definition \cite{huy}.
\end{remark}
For the rest of this article, we fix a projective Lagrangian fibration $\pi:X\to B$ and a torsion class $\phi\in\Sha_{X/B}$.
Torsion Tate-Shafarevich twists usually coincide with Tate-Shafarevich twists in the \'etale topology (see \S\ref{ss:torsion}).

Let us show that $(\pi:X\to B,\phi)$ is a good pair.

Write $m$ for the order of $\phi$.
As observed in \cite[Lemma 3.0.2]{abashevaii}, one can choose $\phi$ so that $m$ kills the local cocycles already.
Based on this observation, it was proved that:
\begin{theorem}[{\cite[\S3]{abashevaii}}]\label{torsion_rel_ample}
    For any line bundle $L$ on $X$, possibly after replacing $L$ by a positive power, there exists a $1$-cocycle of isomorphisms
    \[f_{ij}:L|_{U_i\cap U_j}\to(\phi_{ij})_\ast L|_{U_i\cap U_j},\]
    unique up to $\mathcal O_{U_i\cap U_j}^\times$.
    In particular, they glue to a line bundle $L^\phi$ on $X^\phi$, well-defined up to $\operatorname{Pic}(B)$.

    This establishes an isomorphism
    \[\operatorname{Pic}(X)_{\mathbb Q}/\pi^\ast\operatorname{Pic}(B)_{\mathbb Q}\cong\operatorname{Pic}(X^\phi)_{\mathbb Q}/(\pi^\phi)^\ast\operatorname{Pic}(B)_{\mathbb Q}\]
    sending $\pi$-ample line bundles to $\pi^\phi$-ample line bundles.
\end{theorem}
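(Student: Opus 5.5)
The argument has three steps: construct local isomorphisms $L|_{U_i\cap U_j}\cong(\phi_{ij})_\ast L|_{U_i\cap U_j}$ after replacing $L$ by a power, arrange that they satisfy the cocycle condition so they glue to $L^\phi$ on $X^\phi$, and then pass to $\mathbb Q$-Picard groups modulo the base and check relative ampleness.

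\textbf{Step 1: local isomorphisms.} Following \cite[Lemma 3.0.2]{abashevaii}, choose the cover and the representative so that $\phi_{ij}^m=\operatorname{id}$ on each $U_i\cap U_j$, where $m$ is the order of $\phi$. Write $\phi_{ij}=\exp(v_{ij})$. The path $t\mapsto\exp(tv_{ij})$ connects $\phi_{ij}$ to the identity through automorphisms over the base. Hence $(\phi_{ij})^\ast L$ and $L$ have the same topological Chern class on $\pi^{-1}(U_i\cap U_j)$.

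Their difference therefore lies in the image of $H^1(\pi^{-1}(U_i\cap U_j),\mathcal O)$, so on fibres it is a point of $\operatorname{Pic}^0$. Shrinking the cover to contractible Stein opens, I expect to identify $\mathbf R^1\pi_\ast\mathcal O\cong\Omega_B$ (the Matsushita isomorphism). With this, the difference class becomes $d$ of something, i.e.\ it comes from a function on the base. The simplest way to see this is through the derivative $\frac{d}{dt}\exp(tv)^\ast L$. It is given by contracting $c_1(L)$ against $v$, and by the symplectic form this is a section of $\pi^\ast\Omega_B$ restricted to fibres, which is zero in $\operatorname{Pic}^0$ of the fibre up to pullback from the base.

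This would show that $(\phi_{ij})^\ast L\cong L\otimes\pi^\ast M_{ij}$, and on Stein opens $M_{ij}$ is trivial. So isomorphisms $f_{ij}$ exist. They are unique up to $H^0(\mathcal O^\times_{X_{ij}})=\mathcal O^\times(U_i\cap U_j)$, using connected fibres and properness.

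\textbf{Step 2: gluing.} On triple overlaps the cocycle defect $g_{ijk}=f_{ik}^{-1}\circ\phi_{jk}^\ast f_{ij}\circ f_{jk}$ lies in $\mathcal O^\times(U_{ijk})$. It therefore defines a class in $H^2(B,\mathcal O_B^\times)$, and this class is an obstruction (a gerbe) to gluing. This is the main obstacle.

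My plan is to use $\phi_{ij}^m=\operatorname{id}$ together with passing to $L^{\otimes m}$ or a higher power. Replacing $f_{ij}$ by its tensor power, the obstruction becomes $g^N$. To kill it, I would show that the obstruction class is torsion, since it arises from the torsion class $\phi$ through a bilinear pairing $\Sha_{X/B}\times\operatorname{Pic}(X)\to H^2(B,\mathcal O^\times)$. Then a suitable power of $L$ gives $g$ a coboundary, which we absorb into the $f_{ij}$. Once the cocycle condition holds, the $f_{ij}$ glue to a line bundle $L^\phi$ on $X^\phi$. It is well-defined up to the choice of scalars, i.e.\ up to $\operatorname{Pic}(B)$.

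\textbf{Step 3: the isomorphism and ampleness.} The construction is additive in $L$ and is compatible with pullbacks from $B$, which are sent to themselves. It therefore defines a homomorphism
\[\operatorname{Pic}(X)_{\mathbb Q}/\pi^\ast\operatorname{Pic}(B)_{\mathbb Q}\to \operatorname{Pic}(X^\phi)_{\mathbb Q}/(\pi^\phi)^\ast\operatorname{Pic}(B)_{\mathbb Q}.\]
The same construction applied to $-\phi$, using Remark \ref{ts_twice}, gives an inverse.

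Relative ampleness is local over $B$. Since $L^\phi|_{(\pi^\phi)^{-1}U_i}\cong L|_{\pi^{-1}U_i}$ by construction, $\pi$-ample bundles are sent to $\pi^\phi$-ample bundles.
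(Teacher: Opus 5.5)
Your Step 1 makes a false claim, and it is exactly the step where torsion and the passage to a power of $L$ are needed. For a general vertical flow $\phi_{ij}=\exp(v_{ij})$ one does \emph{not} have $(\phi_{ij})^\ast L\cong L\otimes\pi^\ast M_{ij}$. On a smooth fibre $F$, $\phi_{ij}$ is a translation $t_a$. The class $\varphi_{L|_F}(a):=t_a^\ast L|_F\otimes L|_F^{-1}\in\operatorname{Pic}^0(F)$ is nontrivial unless $a$ lies in the finite group $K(L|_F)$, while anything pulled back from the base is trivial on $F$. Infinitesimally, your derivative $\iota_v c_1(L)$ restricts to $d\varphi_{L|_F}(v|_F)$, which is nonzero in $H^1(F,\mathcal O_F)$ whenever $v|_F\neq 0$ and $L$ is relatively ample. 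The slip is treating a section of $\mathbf R^1\pi_\ast\mathcal O\cong\Omega_B$ as if it came from $H^1(U,\pi_\ast\mathcal O)$. Such a section is precisely the fibrewise $\operatorname{Pic}^0$ direction, and whether the $1$-form is exact is beside the point. Note also that Step 1 never uses the normalization $\phi_{ij}^m=\operatorname{id}$ you set up. If Step 1 were valid, every class of $\Sha_{X/B}$, torsion or not, would admit local lifts.

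The missing argument is short. An element $g\in\underline{\operatorname{Aut}}^0_{X/B}(U)$ is isotopic to the identity over $U$, so $g^\ast L\otimes L^{-1}$ lies in the image of $H^1(\pi^{-1}U,\mathcal O)$. The sheaf $\underline{\operatorname{Aut}}^0_{X/B}$ acts trivially on these classes: check this on the smooth locus, where it is topological, and use torsion-freeness of $\mathbf R^1\pi_\ast\mathcal O$. Hence $g\mapsto g^\ast L\otimes L^{-1}$ is a homomorphism to $\operatorname{Pic}(\pi^{-1}U)$, and $\phi_{ij}^m=\operatorname{id}$ yields $(\phi_{ij})^\ast L^m\cong L^m$. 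Fibrewise this is $\varphi_{L^m}(a)=\varphi_L(ma)=0$ for $a\in F[m]$. This is the observation the paper attributes to \cite[Lemma 3.0.2]{abashevaii}.

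Step 2 also needs repair. There is no bilinear pairing $\Sha_{X/B}\times\operatorname{Pic}(X)\to H^2(B,\mathcal O^\times)$: for non-torsion $\phi$ and relatively ample $L$, no power of $L$ admits local lifts at all. Even for torsion classes, the obstruction is additive in $\phi$ only up to a cup-product correction coming from commutators of lifts. A direct route uses the normalization in the remark after the theorem: after a further power, take $f_{ij}$ with $f_{ij}^+=1$. Commutators of lifts of commuting $m$-torsion automorphisms are $m$-th roots of unity, so the composite of $f_{ij}$ and $f_{jk}$ has $m$-fold iterate $\pm1$. Comparing with $f_{ik}$ shows that $g_{ijk}$ is a locally constant element of $\mu_{2m}$. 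Replacing $L$ by $L^{2m}$ then gives an honest cocycle, with no appeal to $H^2(B,\mathcal O^\times)$. Your uniqueness statement and Step 3 are fine.
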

\begin{remark}
    The ambiguity of $\operatorname{Pic}(B)_{\mathbb Q}$ can be removed:
    Call $f_{ij}^+$ the composition
    \[L|_{U_i\cap U_j}\to(\phi_{ij})_\ast L|_{U_i\cap U_j}\to\cdots\to (\phi_{ij}^m)_\ast L|_{U_i\cap U_j}=L_{U_i\cap U_j}\]
    which is an element of $\mathcal O_{U_i\cap U_j}^\times$.
    Then the maps
    \[\tilde{f}_{ij}=f_{ij}^m(f_{ij}^+)^{-1}:L^m|_{U_i\cap U_j}\to (\phi_{ij})_\ast L^m|_{U_i\cap U_j}\]
    satisfy $\tilde{f}_{ij}^+=1$.
    So there is always a choice of $f_{ij}$ with $f_{ij}^+=1$, possibly after replacing $L$ by a positive power of it.
    This additional requirement allows an isomorphism $\operatorname{Pic}(X)_{\mathbb Q}\cong\operatorname{Pic}(X^\phi)_{\mathbb Q}$.
\end{remark}
\begin{corollary}
    The twist $X^\phi$ is an irreducible hyper-K\"ahler variety, and one can find relatively ample $L\in\operatorname{Pic}(X),L^\phi\in\operatorname{Pic}(X^\phi)$ that are such that
    \[L|_{\pi^{-1}U_i}\cong L^\phi|_{(\pi^\phi)^{-1}U_i}\]
    for each $i$.
\end{corollary}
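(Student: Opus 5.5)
The plan is to build an ample line bundle on $X^\phi$ directly out of one on $X$, using Theorem \ref{torsion_rel_ample}, and then get the hyper-K\"ahler property from what is known about twists of Lagrangian fibrations.

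\textbf{Step 1: choosing $L$ and $L^\phi$.} Start with a $\pi$-ample line bundle $L$ on $X$; since $X$ is projective, an ample one will do. By Theorem \ref{torsion_rel_ample}, after replacing $L$ by a positive power there is a $1$-cocycle of isomorphisms $f_{ij}:L|_{U_i\cap U_j}\to(\phi_{ij})_\ast L|_{U_i\cap U_j}$. These glue the bundles $L|_{\pi^{-1}U_i}$ to a line bundle $L^\phi$ on $X^\phi$. By construction $L^\phi|_{(\pi^\phi)^{-1}U_i}$ is $L|_{\pi^{-1}U_i}$, transported along the identification $\pi^{-1}U_i\cong(\pi^\phi)^{-1}U_i$. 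This already gives the local isomorphisms in the statement. The theorem also says that $L^\phi$ is $\pi^\phi$-ample. If one wants a direct check: relative ampleness can be tested fiberwise, or locally over $B$. Over each $U_i$ the pair $((\pi^\phi)^{-1}U_i, L^\phi)$ is isomorphic to $(\pi^{-1}U_i,L)$, and the latter is relatively ample over $U_i$.

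\textbf{Step 2: $X^\phi$ is projective.} Here $B$ is projective. Pick an ample $A$ on $B$. For $N\gg0$ the bundle $L^\phi\otimes(\pi^\phi)^\ast A^N$ is ample on $X^\phi$, by the standard criterion that a relatively ample bundle twisted by a sufficiently high power of the pullback of an ample bundle from a projective base is ample. This criterion needs $\pi^\phi$ proper, which holds because $\pi^\phi$ is proper over each $U_i$. So $X^\phi$ is a projective, in particular K\"ahler, manifold. Replacing $L$ by $L\otimes\pi^\ast A^N$ and $L^\phi$ by $L^\phi\otimes(\pi^\phi)^\ast A^N$ does not affect the local isomorphisms, since both sides change by the same pullback from $U_i$.

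\textbf{Step 3: $X^\phi$ is irreducible hyper-K\"ahler.} Following the discussion in the introduction, every twist of a Lagrangian fibration is holomorphic symplectic, by \cite{abashevaii,AR21}. The reason is that the $\phi_{ij}$ are flows of vertical Hamiltonian vector fields, so they preserve the symplectic form $\sigma$ up to the appropriate correction. If this has to be argued directly, note that $\phi_{ij}^\ast\sigma-\sigma$ is exact along fibers, and the correction can be absorbed into pullbacks of $1$-forms from the base, as in \emph{loc.~cit.}

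For the invariants $h^1$ and $h^{2,0}$, use Corollary \ref{isom_hodged_structure}, which applies because both $X$ and $X^\phi$ are now projective. It gives $H^k(X^\phi;\mathbb Q)\cong H^k(X;\mathbb Q)$ as Hodge structures, hence $h^1(X^\phi)=0$ and $h^{2,0}(X^\phi)=1$. Together with K\"ahlerness from Step 2, this shows $X^\phi$ is irreducible hyper-K\"ahler.

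The main obstacle is the holomorphic symplectic part of Step 3. Everything else reduces formally to Theorem \ref{torsion_rel_ample} and Corollary \ref{isom_hodged_structure}. For that step I would first cite the relevant statement of \cite{abashevaii}, and fall back on the Hamiltonian-flow argument above only if needed.
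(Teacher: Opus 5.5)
Your proposal is correct. For the line bundles and for the holomorphic symplectic form it agrees with the paper: $L^\phi$ and the local isomorphisms come straight from Theorem~\ref{torsion_rel_ample}, and symplecticity is taken from \cite[Corollary 3.7]{AR21}.

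Where you genuinely diverge is the cohomological half of the irreducible hyper-K\"ahler condition. The paper simply points to the computations in \cite[\S5]{abashevaii}. As recalled in the introduction, those computations show that any K\"ahler twist of a Lagrangian fibration is irreducible hyper-K\"ahler, with no projectivity assumption and no decomposition theorem. You instead first make projectivity of $X^\phi$ explicit in Step 2, using $\pi^\phi$-ampleness of $L^\phi$ over the projective base. You then read off $h^1(X^\phi)=0$ and $h^{2,0}(X^\phi)=1$ from the paper's own Corollary~\ref{isom_hodged_structure}. This is legitimate and not circular: that corollary is proved in \S\ref{ss:pervsheaves} from the decomposition theorem and Proposition~\ref{decomp_TS} alone, with no hyper-K\"ahler input.

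Your route buys a proof that is self-contained within the paper, and it gives more, since all Hodge numbers match. The price is that it rests on the heavy machinery of \cite{bbdg,saito} and works only once the twist is known to be projective. The paper's cited route covers arbitrary K\"ahler twists.

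Your fallback sketch for the symplectic form is imprecise. If $\iota_v\sigma=\pi^\ast\alpha$, the flow of $v$ changes $\sigma$ by $\pi^\ast d\alpha$, which is a $2$-form pulled back from the base, not a correction by $1$-forms. Absorbing these corrections into the local forms requires a cohomological vanishing argument. Since your main line cites \cite{AR21} exactly as the paper does, this does not affect correctness.
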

\begin{proof}
    It is holomorphic symplectic by \cite[Corollary 3.7]{AR21} and the cohomological computations can be found in \cite[\S5]{abashevaii}.
\end{proof}
Lagrangian fibrations are equidimensional by \cite{matequid}.
One particularly nice property of Lagrangian fibrations is that the perverse filtration associated to it is as nice as possible as far as the cup product is concerned.
\begin{theorem}[{\cite[Appendix A]{shenyin}}]\label{shenyinmain}
    The perverse filtration associated to a Lagrangian fibration is multiplicative.
    Moreover, there is an isomorphism of graded rings $H^\ast(X;\mathbb Q)\cong\mathbb H_\pi$ (where only the cohomology grading is considered on the latter).
\end{theorem}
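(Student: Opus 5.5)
The statement is Theorem \ref{shenyinmain} (Shen--Yin). My plan follows the approach of \cite[Appendix A]{shenyin}. The key input is the identification of the perverse filtration with a filtration defined by a relatively ample class. This uses the relative hard Lefschetz for $\pi$ together with the global Hodge-theoretic structure of $H^\ast(X)$ under the Looijenga--Lunts--Verbitsky (LLV) $\mathfrak{so}$-action.

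\emph{Step 1: an $\mathfrak{sl}_2$ whose weights are the perverse degrees.} Let $\eta=c_1(L)$ with $L$ $\pi$-ample, and let $\beta=\pi^\ast\kappa$ with $\kappa$ ample on $B$. I would use Matsushita's results: $B$ has dimension $n=\frac12\dim X$, and $q(\beta)=0$, $q(\eta,\beta)\neq 0$. Then $\beta$ is an isotropic class in $H^2$, and its cup product $L_\beta$ satisfies the LLV relations. Moreover $L_\beta$ and $L_\eta$ generate $\mathfrak{sl}_2$-triples, and the grading operator $h_\beta$ of a suitable $\mathfrak{sl}_2$ built from $\beta$ and its Lefschetz dual acts semisimply on $H^\ast(X;\mathbb Q)$. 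I would then show that the weight filtration of the nilpotent operator $\beta\smile-$, suitably shifted, equals $P_\bullet$. This follows the de Cataldo--Migliorini characterization of the perverse filtration for a projective $\pi$ via the flag filtration of $\kappa$ on $B$, combined with $\beta^{n+1}=0$ and $\beta^n\neq 0$. Concretely, $P_k H^\ast$ should be the sum of the $h$-eigenspaces with eigenvalue $\le k-n$.

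\emph{Step 2: multiplicativity.} Since $h$ comes from the LLV algebra, it acts on $H^\ast(X)$ by derivations. The LLV Lie algebra acts via derivations in degree $0$ (the Verbitsky component's grading operators). Hence the eigenspace decomposition $H^\ast=\bigoplus_k H_k$ satisfies $H_k\smile H_{k'}\subset H_{k+k'}$. This gives multiplicativity of $P_\bullet=\bigoplus_{j\le k}H_j$. It also provides a multiplicative splitting, so $H^\ast(X;\mathbb Q)\cong\bigoplus_k H_k\cong\mathbb H_\pi$ as graded rings.

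\emph{Main obstacle.} The hard step is Step 1: proving that the eigenspace filtration of the derivation $h$ coincides with the perverse filtration. I would first try to realize $h$ as $[L_\eta,\Lambda]$-type combinations, using that $(\eta,\beta)$ span a hyperbolic plane. Then I would compare the resulting filtration with the de Cataldo--Migliorini description $P_k=\sum_i \ker(\beta^{\,\cdot})$ through the relative hard Lefschetz of Theorem \ref{decomp}. Rationality must be checked: $h$ has to be defined over $\mathbb Q$, which holds because $\eta$ and $\beta$ are rational classes.
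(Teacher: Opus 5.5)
Your proposal is correct and follows essentially the argument of \cite{shenyin} that the paper cites without reproducing: identify $P_\bullet$ (via de Cataldo--Migliorini and hard Lefschetz for $\kappa$ on each $H^\ast(B,\mathcal H_k)$) with the canonical filtration of the nilpotent $L_\beta$, realize it as the eigenspace grading of an $\mathfrak{sl}_2$-triple in the degree-zero part of the LLV algebra built from $\beta$ and an isotropic $\eta=c_1(L)+\lambda\beta$ (the same normalization $q_X(c_1(L))=0$ the paper later imposes), and conclude from the fact that $\mathfrak{so}(H^2,q)$ acts by derivations. Only the bookkeeping needs adjusting: the semisimple element is a derivation only up to an additive constant, and for the triple containing $L_\beta$ one has $P_kH^m=\bigoplus_{\lambda\ge m-n-k}H^m_\lambda$, whereas your threshold $\lambda\le k-n$ is the right one for the grading of the triple containing $L_\eta$ (the two gradings sum to the shifted degree operator), so either choice gives the multiplicative splitting.
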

Hence $(\pi,\phi)$ is a good pair, and the associated graded ring is isomorphic to the cohomology ring.
\subsection{Topological invariants under twists}\label{ss:specialize_lag}
The results we obtained in the previous section specialize, in the case of a Lagrangian fibration, to the invariance of certain topological information of the total space under a torsion twist.
\begin{proposition}\label{HRpairing_proved}
    There exist ample classes $\omega,\omega^\phi$ on $X,X^\phi$ such that the isomorphism $H^\ast(X;\mathbb Q)\cong H^\ast(X^\phi;\mathbb Q)$ as in Corollary \ref{isom_hodged_structure} may be chosen so that:

    (a) it conjugates the endomorphisms $\omega\smile-$ and $\omega^\phi\smile-$, and

    (b) it preserves the intersection pairing, \emph{i.e.}~the bilinear form given by
    \[\langle\alpha,\beta\rangle:=\int\alpha\smile\beta.\]
\end{proposition}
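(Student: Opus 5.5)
We pass through the associated graded rings. By Theorem \ref{shenyinmain} (the pair $(\pi,\phi)$ being good), there are graded ring isomorphisms $s:H^\ast(X;\mathbb Q)\cong\mathbb H_\pi$ and $s^\phi:H^\ast(X^\phi;\mathbb Q)\cong\mathbb H_{\pi^\phi}$. The candidate isomorphism is
\[(s^\phi)^{-1}\circ\Psi^{\mathbb H}\circ s:H^\ast(X;\mathbb Q)\to H^\ast(X^\phi;\mathbb Q).\]
Since $\Psi^{\mathbb H}$ is induced by the identification of the summands $\mathcal H_k$ from Proposition \ref{decomp_TS}, it is an isomorphism of Hodge structures in each bidegree. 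If $s$ and $s^\phi$ can be taken compatible with Hodge structures (e.g.\ because the splitting comes from mixed Hodge module data), this composite is of the form in Corollary \ref{isom_hodged_structure}. I would first check in \cite{shenyin} that the splitting there is of this kind. It is built from the relative Lefschetz data of a relatively ample class, so this should hold.

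Next, the ample classes. Take $\omega=\pi^\ast\kappa+\ell$, where $\ell=c_1(L)$ and $L,L^\phi$ are the relatively ample bundles from the corollary to Theorem \ref{torsion_rel_ample}, and set $\omega^\phi=(\pi^\phi)^\ast\kappa+c_1(L^\phi)$. After replacing $\kappa$ by a large multiple, both classes are ample. Under $s$, the class $\pi^\ast\kappa$ lands in $\operatorname{Gr}^P_0H^2$ as $\kappa\in H^2(\mathcal H_0)$. The class $\ell$ lies in $P_2H^2$, and its component in $\operatorname{Gr}^P_2H^2=H^0(\mathcal H_2)$ is $\eta$. It may also have a component in $\operatorname{Gr}_1^PH^2$ or $\operatorname{Gr}_0^PH^2$. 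I would absorb these by choosing the multiplicative splitting adapted to $L$, so that $s(\ell)=\eta$ exactly; the splitting in \cite{shenyin} is constructed in this way. Then $s$ carries $\omega\smile-$ to $(\kappa+\eta)\smile^{\mathbb H}-$. Corollary \ref{intersectionform}(a) and Proposition \ref{lef_action}(b) show that $\Psi^{\mathbb H}$ conjugates $(\kappa+\eta)\smile^{\mathbb H}-$ to $(\kappa+\eta^\phi)\smile^{\mathbb H}-$. Transporting back through $s^\phi$ gives (a).

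For (b), the ring isomorphism $s$ preserves degrees, so $\int\alpha\smile\beta$ corresponds to the top-degree component of $s(\alpha)\smile^{\mathbb H}s(\beta)$. This component lies in $H^{2\dim X}=\operatorname{Gr}^P_{2r}H^{2\dim X}$, because the perverse filtration on top cohomology is concentrated in perversity $2r$. I would also normalize so that $s$ is the identity on top degree, i.e.\ $s$ respects the fundamental class. Then $\langle\alpha,\beta\rangle=\langle s\alpha,s\beta\rangle_{\mathbb H}$, and Corollary \ref{intersectionform}(b) finishes the argument, provided $\Psi^{\mathbb H}$ matches the orientation classes. It does, since the identification $\mathcal H_{2r}\cong\mathcal H_{2r}$ is the canonical one on the local system, and the two sides agree locally.

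The main obstacle is the middle step: arranging that the multiplicative splitting sends $\ell$ to the pure class $\eta$, or at least that it conjugates $\ell\smile-$ to $\eta\smile^{\mathbb H}-$, while remaining Hodge-compatible. If a direct choice is not available, I would try modifying $\omega$ by classes in $P_1H^2$ and $P_0H^2$. These correspond on both sides via $\Psi^{\mathbb H}$, by the Proposition applied with $\mathcal H_0$ a local system and by Theorem \ref{torsion_rel_ample}, so the same modification can be made on $X^\phi$ to keep the two classes matched.
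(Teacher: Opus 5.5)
Your architecture matches the paper's. You transport through $\mathbb H_\pi$ and $\mathbb H_{\pi^\phi}$ via Shen--Yin splittings $s,s^\phi$, and you match $\kappa\smile^{\mathbb H}-$ and $\eta\smile^{\mathbb H}-$ using Corollary \ref{intersectionform} and Proposition \ref{lef_action}. You also take $\omega,\omega^\phi$ to be relatively ample classes plus a common large multiple of $\kappa$. However, the step you flag as the main obstacle is a genuine gap, and the resolution you propose for it is false as stated.

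No multiplicative splitting can send an arbitrary relatively ample class $\ell=c_1(L)$ to the pure class $\eta$. Suppose $s(\ell)=\eta\in\operatorname{Gr}^P_2H^2$. Then $s(\ell^{n+1})=\eta^{n+1}$ lies in $\operatorname{Gr}^P_{2n+2}H^{2n+2}=0$, since perversities are at most $2r=2n$. Hence $\ell^{n+1}=0$, so $\int\ell^{2n}=0$, and the Fujiki relation forces $q_X(\ell)=0$. So the splitting of \cite{shenyin} is not ``adapted to $L$'' for a general $L$. It is built from $\kappa$ together with a BBF-isotropic class pairing nontrivially with $\kappa$, and only such classes can sit in pure perversity $2$.

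The missing idea is the paper's normalization. Since $L$ is relatively ample, $q_X(c_1(L),\kappa)\neq 0$, so there is $\lambda\in\mathbb Q$ with $q_X(c_1(L)+\lambda\kappa)=0$. Replacing $c_1(L)$ by $c_1(L)+\lambda\kappa$ does not change its image $\eta$ in $\operatorname{Gr}^P_2H^2$, because $\kappa\in P_0H^2$. Do the same on $X^\phi$ with its own $\lambda^\phi$. The two normalizations are made independently; there is no need to make ``the same modification'' on both sides.

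With this normalization, the Shen--Yin splitting can be chosen with the isotropic class $c_1(L)$ in pure perversity $2$, so $s(c_1(L))=\eta$, and likewise $s^\phi(c_1(L^\phi))=\eta^\phi$. Then $\omega=c_1(L)+N\kappa$ and $\omega^\phi=c_1(L^\phi)+N\kappa$, with one $N$ large enough, map to $\eta+N\kappa$ and $\eta^\phi+N\kappa$. From there your argument goes through, and your treatment of (b) via the top-degree piece is fine.

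Your fallback points in this direction but does not identify the isotropy criterion. Its $P_1$ part would also leave the range of the paper's tools. Cup product with a class having nonzero image in $\operatorname{Gr}^P_1H^2=H^1(\mathcal H_1)$ is governed by $\operatorname{prod}_{1,k}$. The paper only shows $\operatorname{prod}_{1,k}$ agrees on both sides when one of $\mathcal H_1,\mathcal H_k,\mathcal H_{k+1}$ is a local system. Only modifications by multiples of $\kappa$ are safe, and they are exactly what is needed.
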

\begin{proof}
    Since $q_X(c_1(L),\kappa)\neq 0$, we can choose $\lambda\in\mathbb Q$ such that $q_X(c_1(L)+\lambda\kappa)=0$.
    Similarly, we can choose $\lambda^\phi\in\mathbb Q$ such that $q_{X^\phi}(c_1(L^\phi)+\lambda^\phi\kappa)=0$.
    This means that we can assume, without loss of generality, that our choice of relatively ample line bundles $L,L^\phi$ satisfies
    \[q_X(c_1(L))=0=q_{X^\phi}(c_1(L^\phi)).\]

    By the proof of the main theorem in \cite{shenyin}, this means that the splitting in Theorem \ref{shenyinmain} can be chosen so that the image of $c_1(L)$ in $\mathbb H$ generates exactly $\operatorname{Gr}^P_2H^2(X;\mathbb Q)$.
    Of course, this image is exactly the class $\eta$ described in Proposition \ref{lef_action}.

    Moreover, the $\kappa$ action on $H^\ast(X;\mathbb Q)$ coincides with the $\kappa$ action on $\mathbb H_\pi$.
    So for any $N$, the class $c_1(L)+N\kappa\in H^\ast(X;\mathbb Q)$ is sent to $\eta+N\kappa\in\mathbb H_\pi$ under an isomorphism of the form in Theorem \ref{shenyinmain}.

    All of these are true on the $X^\phi$ side as well.
    Hence we can just choose $N$ such that both $\omega=c_1(L)+N\kappa$ and $\omega^\phi=c_1(L^\phi)+N\kappa$ are ample.
    Both claims then follow from their analogues on the associated graded ring, which are true by Corollary \ref{intersectionform} and Proposition \ref{lef_action}.
\end{proof}
Given a K\"ahler class $\omega$ on a compact manifold $X$, we define the Hodge-Riemann form on $H^\ast(X;\mathbb Q)$ by setting
\[\langle\alpha,\beta\rangle_\omega:=\int\left(1+\omega+\omega^2+\cdots\right)\smile\alpha\smile\beta.\]
\begin{corollary}
    Under the isomorphism in the preceding proposition, the Hodge-Riemann form is preserved.
\end{corollary}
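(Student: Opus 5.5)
The Hodge--Riemann form is built entirely from cup products with powers of $\omega$ followed by integration, so the plan is to reduce it to the two invariants just established. For $\alpha,\beta\in H^\ast(X;\mathbb Q)$ we have
\[\langle\alpha,\beta\rangle_\omega=\sum_{j\ge 0}\int\omega^j\smile\alpha\smile\beta=\sum_{j\ge0}\langle \omega^j\smile\alpha,\beta\rangle,\]
where $\langle-,-\rangle$ is the intersection pairing of Proposition \ref{HRpairing_proved}(b). The sum is finite, since $\omega^j=0$ for $j>\dim X$. Let $\Phi:H^\ast(X;\mathbb Q)\to H^\ast(X^\phi;\mathbb Q)$ be the isomorphism of that proposition. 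We then need to show that $\langle\Phi\alpha,\Phi\beta\rangle_{\omega^\phi}=\langle\alpha,\beta\rangle_\omega$.

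First, by Proposition \ref{HRpairing_proved}(a), $\Phi$ conjugates $\omega\smile-$ to $\omega^\phi\smile-$. Iterating this gives $\Phi(\omega^j\smile\alpha)=(\omega^\phi)^j\smile\Phi(\alpha)$ for every $j\ge 0$, by a trivial induction on $j$. Second, by Proposition \ref{HRpairing_proved}(b), $\langle\Phi(\omega^j\smile\alpha),\Phi\beta\rangle=\langle\omega^j\smile\alpha,\beta\rangle$. Combining the two,
\[\langle\Phi\alpha,\Phi\beta\rangle_{\omega^\phi}=\sum_j\langle(\omega^\phi)^j\smile\Phi\alpha,\Phi\beta\rangle=\sum_j\langle\Phi(\omega^j\smile\alpha),\Phi\beta\rangle=\sum_j\langle\omega^j\smile\alpha,\beta\rangle=\langle\alpha,\beta\rangle_\omega.\]

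One bookkeeping point needs checking: $\int$ picks out the top-degree component, and the pairing $\langle\gamma,\beta\rangle=\int\gamma\smile\beta$ in (b) must apply to inhomogeneous classes. This follows by bilinearity, since $\Phi$ is graded and (b) holds degree by degree. Neither this nor anything else is a real obstacle: all the substantive work (the choice of $\omega,\omega^\phi$ making $\eta$ and $\kappa$ correspond, and the identification of the pairing via $\mathbb H_\pi$) was already done in Proposition \ref{HRpairing_proved}. This corollary is purely formal.
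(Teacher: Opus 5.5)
Your proposal is correct and is essentially the paper's approach. The paper gives no separate proof because the corollary is exactly this formal consequence: expand $\langle\alpha,\beta\rangle_\omega=\sum_j\langle\omega^j\smile\alpha,\beta\rangle$, then apply parts (a) and (b) of Proposition \ref{HRpairing_proved}, which hold for one and the same isomorphism.
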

\subsection{Beauville-Bogomolov-Fujiki lattice}\label{ss:bbf}
Write $2n$ for the dimension of $X$ (over $\mathbb C$).
For any ample class $\omega\in H^2(X;\mathbb Q)$, we define a quadratic form on $H^2(X;\mathbb Q)$ by the formula
\[q^\omega(\alpha):=(2n-1)\langle 1,1\rangle_\omega\langle\alpha,\alpha\rangle_\omega-(2n-2)\langle 1,\alpha\rangle_\omega^2\]
for any $\alpha\in H^2(X;\mathbb Q)$.
From our previous discussions, we immediately get:
\begin{corollary}\label{hodge_similitude}
    Let $\omega,\omega^\phi$ be as in Proposition \ref{HRpairing_proved}.
    Then there is a Hodge isometry
    \[(H^2(X;\mathbb Q),q^\omega)\cong (H^2(X^\phi;\mathbb Q),q^{\omega^\phi}).\]
\end{corollary}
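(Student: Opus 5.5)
The plan is to read the corollary off Proposition \ref{HRpairing_proved} and its corollary on the Hodge-Riemann form. Let $\Theta:H^\ast(X;\mathbb Q)\to H^\ast(X^\phi;\mathbb Q)$ be the isomorphism chosen there. It preserves the pairing $\langle-,-\rangle$, conjugates $\omega\smile-$ to $\omega^\phi\smile-$, and respects the cohomological grading. Its restriction to $H^2$ will be the desired map, and I have to check three things.

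\emph{First, $\Theta(1)=1$.} The class $1$ spans $H^0(X;\mathbb Q)$, so $\Theta(1)=c\cdot 1$ for some $c\in\mathbb Q^\times$. I expect $c=1$, because $\Theta$ is induced from the identification of $\mathcal H_0=\underline{\mathbb Q}_B$ on both sides, and that identification sends the unit to the unit. Even if this is not automatic, it suffices to rescale $\Theta$ in degree $0$ alone. The features used below (the grading, preservation of $\langle-,-\rangle$, and the $\omega$-conjugation) are only checked in degrees where the pairing between $H^0$ and $H^{4n}$ enters, so I will adjust $\Theta$ on $H^{4n}$ by $c^{-1}$ at the same time. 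This keeps every required compatibility intact, since $\omega^{2n}\smile-$ maps $H^0$ to $H^{4n}$ and both sides scale together.

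\emph{Second, the forms match.} Take $\alpha\in H^2$ and expand
\[\langle\alpha,\alpha\rangle_\omega=\int\omega^{2n-2}\alpha^2,\qquad \langle 1,\alpha\rangle_\omega=\int\omega^{2n-1}\alpha,\qquad \langle 1,1\rangle_\omega=\int\omega^{2n}.\]
Each of these is of the form $\langle \omega^j\smile x,y\rangle$ with $x,y\in\{1,\alpha\}$, except the term $\int \omega^{2n-2}\alpha^2$, which is instead $\langle\omega^{2n-2}\alpha,\alpha\rangle$ with $x=\alpha$, $y=\alpha$. The terms of the first kind are preserved because $\Theta$ preserves $\langle-,-\rangle$ and intertwines $\omega^j\smile-$ with $(\omega^\phi)^j\smile-$, with $\Theta(1)=1$ used where needed. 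The remaining term $\langle\omega^{2n-2}\alpha,\alpha\rangle$ is likewise preserved, since $\Theta(\omega^{2n-2}\alpha)=(\omega^\phi)^{2n-2}\Theta(\alpha)$. Together these give $q^{\omega^\phi}(\Theta\alpha)=q^\omega(\alpha)$.

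\emph{Third, Hodge compatibility.} I need $\Theta|_{H^2}$ to be a morphism of Hodge structures, and I expect this to be the only real obstacle. The isomorphism of Corollary \ref{isom_hodged_structure} comes from $\Psi_k$, which are isomorphisms of mixed Hodge modules. By contrast, the splitting in Theorem \ref{shenyinmain}, used to pass between $H^\ast$ and $\mathbb H_\pi$, must also be compatible with Hodge structures. My first approach is to note that the perverse filtration is a filtration by sub-Hodge structures and that $\operatorname{Gr}^P$ inherits Hodge structures matched by $\Psi^{\mathbb H}$. For the Shen-Yin splitting, I would use that it is built from cup products with the classes $c_1(L)$ and $\kappa$, which are Hodge classes; hence the splitting is a morphism of Hodge structures. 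On $H^2$ this is explicit: the pieces are $\mathbb Q\kappa$, $\mathbb Q c_1(L)$, and the $q_X$-orthogonal complement of $\langle\kappa,c_1(L)\rangle$, all of which are sub-Hodge structures. Composing Hodge isomorphisms then gives a Hodge isometry.
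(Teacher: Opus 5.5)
Your proposal is correct and is the paper's argument made explicit (the paper simply declares the corollary immediate from the preserved Hodge--Riemann data). Restricting $\Theta$ to $H^2$ works because $q^\omega$ only involves $\langle 1,1\rangle_\omega$, $\langle 1,\alpha\rangle_\omega$, $\langle\alpha,\alpha\rangle_\omega$, and $\Theta(1)=1$, since Shen--Yin's isomorphism is a unital ring map and $\Psi_0$ preserves the unit. Your $H^2$-level Hodge check (the pieces $\mathbb Q\kappa$, $\mathbb Q c_1(L)$, $\langle\kappa,c_1(L)\rangle^{\perp}$ are sub-Hodge structures and $\Psi^{\mathbb H}$ is Hodge) usefully supplies what the paper leaves implicit after Proposition~\ref{HRpairing_proved}; the general slogan that the splitting is ``built from cup products'' is inaccurate, but you don't need it.

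Your fallback for $c\neq 1$ is wrong: rescaling $\Theta$ on $H^0$ and $H^{4n}$ by $c^{-1}$ breaks both the $H^0\times H^{4n}$ pairing and the $\omega$-conjugation (e.g.\ $H^0\to H^2$). The correct remark is that $\Theta(1)=c\cdot 1$ would only give $q^{\omega^\phi}\circ\Theta=c^{-2}q^\omega$, which is fixed by rescaling $\Theta|_{H^2}$ by $c$ --- and this is moot anyway, since $c=1$.
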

\begin{theorem}[\cite{beauville}, \cite{fujiki}, \cite{verbitskythesis}, \cite{bogomolov}, \cite{huy}]
    The quadratic form $q^\omega$ is non-degenerate of signature $(3,b_2(X)-3)$.

    If $\omega'$ is a different ample class, then $q^\omega=\lambda q^{\omega'}$ for some $\lambda\in\mathbb Q^+$.
    Let $q_X=\lambda q^\omega$ be such that $\lambda\in\mathbb Q^+$ and $q_X$ is integral and indivisible on $H^2(X;\mathbb Z)$.
    Then there is a constant $c_X\in\mathbb Q^+$ such that
    \[\int\alpha^{2n}=c_X\frac{(2n)!}{2^nn!}q_X(\alpha)^n\]
    for every $\alpha\in H^2(X;\mathbb Z)$.
\end{theorem}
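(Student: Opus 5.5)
The plan is to go through Beauville's explicit form and reduce everything to the Fujiki relation; the three assertions then come out in the order Fujiki relation, proportionality, signature. Fix a holomorphic symplectic form $\sigma$ normalized by $\int(\sigma\bar\sigma)^n=1$. On $H^2(X;\mathbb C)$ consider Beauville's form
\[q_\sigma(\alpha)=\frac n2\int\alpha^2(\sigma\bar\sigma)^{n-1}+(1-n)\Big(\int\alpha\sigma^{n-1}\bar\sigma^n\Big)\Big(\int\alpha\sigma^n\bar\sigma^{n-1}\Big).\]
It is real on $H^2(X;\mathbb R)$. It satisfies $q_\sigma(\sigma)=0$ and $q_\sigma(\sigma,\bar\sigma)>0$. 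Using the Hodge decomposition $H^2=H^{2,0}\oplus H^{1,1}\oplus H^{0,2}$ together with $h^{2,0}=1$, one checks that $q_\sigma$ is positive on $\langle\operatorname{Re}\sigma,\operatorname{Im}\sigma\rangle$. On $H^{1,1}$ it is a positive multiple of $\int\alpha^2(\sigma\bar\sigma)^{n-1}$.

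\textbf{Step 1: the Fujiki relation $\int\alpha^{2n}=c\,q_\sigma(\alpha)^n$.} Consider the Kuranishi family of $X$, which is unobstructed by Bogomolov--Tian--Todorov. The period map $t\mapsto[\sigma_t]$ is a local isomorphism onto a complex hypersurface of $\mathbb P(H^2(X;\mathbb C))$ (local Torelli). For each nearby $t$ we have $\sigma_t^{n+1}=0$ for type reasons, so $\int\sigma_t^{2n}=0$. A further type computation gives $q_\sigma(\sigma_t)=0$, so the period image lies in the quadric $\{q_\sigma=0\}$. Comparing dimensions shows the image is an open subset of that quadric. Hence the degree-$2n$ polynomial $\alpha\mapsto\int\alpha^{2n}$ vanishes on an open subset of the quadric, hence on all of it.

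The quadric is irreducible, since $q_\sigma$ is nondegenerate of rank at least $3$. It follows that $q_\sigma$ divides $\int\alpha^{2n}$. I would then show that $q_\sigma$ divides the quotient again, by the same argument applied to the quotient or by differentiating along the quadric. Iterating gives $\int\alpha^{2n}=c\,q_\sigma^n$. Since the left side is rational on $H^2(X;\mathbb Q)$, $q_\sigma$ is proportional to a rational form. Rescaling to the primitive integral form $q_X$ gives the stated constant $c_X$, which is positive because $\int\omega^{2n}>0$.

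\textbf{Step 2: proportionality of $q^\omega$.} Expand $f(t)=\int(\omega+t\alpha)^{2n}=C\,(q(\omega)+2tq(\omega,\alpha)+t^2q(\alpha))^n$, where $q=q_X$. Note that $\langle1,1\rangle_\omega=\int\omega^{2n}$, $\langle1,\alpha\rangle_\omega=\int\omega^{2n-1}\alpha$ and $\langle\alpha,\alpha\rangle_\omega=\int\omega^{2n-2}\alpha^2$. Comparing the coefficients of $t^0,t^1,t^2$ in $f$ gives
\[\langle1,1\rangle_\omega=Cq(\omega)^n,\qquad \langle 1,\alpha\rangle_\omega=Cq(\omega)^{n-1}q(\omega,\alpha),\]
\[\langle\alpha,\alpha\rangle_\omega=\frac{C}{2n-1}\big(q(\omega)^{n-1}q(\alpha)+2(n-1)q(\omega)^{n-2}q(\omega,\alpha)^2\big).\]
Substituting these into the definition of $q^\omega$, the terms in $q(\omega,\alpha)^2$ cancel exactly, leaving $q^\omega=C^2q(\omega)^{2n-1}q_X$. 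The factor $C^2q(\omega)^{2n-1}$ is a positive rational, because $q(\omega)^n>0$ and $q(\omega)^{2n-1}=(\int\omega^{2n})^2/(C^2q(\omega))$. The same holds for any other ample $\omega'$, so $q^\omega$ and $q^{\omega'}$ are positive rational multiples of each other.

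\textbf{Step 3: signature.} It remains to compute the signature of $q_\sigma$. We already know it is positive on $\langle\operatorname{Re}\sigma,\operatorname{Im}\sigma\rangle$ and that $q_\sigma(\omega)>0$. Both $H^{2,0}\oplus H^{0,2}$ and the primitive part of $H^{1,1}$ are $q_\sigma$-orthogonal to $\omega$. On primitive $(1,1)$-classes, $q_\sigma$ is a positive multiple of $\int\alpha^2(\sigma\bar\sigma)^{n-1}$. After writing $\omega^{2n-2}$ in terms of $(\sigma\bar\sigma)^{n-1}$ via Step 1, I would show this is negative definite, using the Hodge--Riemann bilinear relations for the primitive class $\alpha$. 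This gives signature $(3,b_2-3)$.

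\textbf{Main obstacle.} I expect Step 1 to be the hardest part. The two delicate points are, first, showing that the period image is genuinely open in the quadric, which needs local Torelli and a dimension count. Second, the divisibility argument must upgrade vanishing on the quadric to vanishing to order $n$, i.e.\ to $q_\sigma^n$ dividing $\int\alpha^{2n}$.
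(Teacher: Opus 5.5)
The paper gives no proof of this statement; it quotes it from Beauville, Fujiki, Bogomolov, Verbitsky and Huybrechts. Your outline follows that classical route, and Step 2 is correct: the $q(\omega,\alpha)^2$ terms do cancel. Step 3 also works once you apply the same coefficient formula, with $q(\omega,\alpha)=0$, to $\omega$-primitive $(1,1)$-classes and then invoke Hodge--Riemann.

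The genuine gap is in Step 1, exactly where you flag it. You weaken $\sigma_t^{n+1}=0$ to $\int\sigma_t^{2n}=0$. From the mere vanishing of $f(\alpha)=\int\alpha^{2n}$ on $Q=\{q_\sigma=0\}$ you can only ever get $q_\sigma\mid f$. The values of $f$ on $Q$ carry no information about the quotient $f/q_\sigma$, so ``the same argument applied to the quotient'' has nothing to run on. Derivatives of $f$ tangent to $Q$ also vanish for trivial reasons and give nothing new. What you need is vanishing of the \emph{transverse} derivatives of $f$ along $Q$ up to order $n-1$. That requires the full cohomological identity, not just its top-degree integral.

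Concretely, $\alpha\mapsto\alpha^{n+1}\in H^{2n+2}(X;\mathbb C)$ is a polynomial map vanishing on the cone over an open subset of the irreducible quadric $Q$. Hence $\alpha^{n+1}=0$ for every $\alpha$ with $q_\sigma(\alpha)=0$. For such $\alpha$ and any $\beta\in H^2(X;\mathbb C)$,
\[f(\alpha+s\beta)=\sum_{k=0}^{n}\binom{2n}{k}s^{2n-k}\int\alpha^k\beta^{2n-k},\]
which is divisible by $s^n$. So all derivatives of $f$ of order $<n$ vanish on $Q$. Write $f=q_\sigma^m g$ with $q_\sigma\nmid g$, and look at a general point of $Q$, where $g\neq0$ and $dq_\sigma\neq0$. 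This gives $m\ge n$, hence $f=c\,q_\sigma^n$ by degree.

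Two smaller repairs are also needed.

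\emph{Irreducibility of $Q$.} You use it in Step 1, before nondegeneracy is proved in Step 3. Get rank $\ge 3$ directly from the Gram matrix on $\langle\sigma,\bar\sigma,\omega\rangle$. This needs $q_\sigma(\omega)=\tfrac{n}{2}\int\omega^2(\sigma\bar\sigma)^{n-1}>0$, a pointwise positivity fact you assert in Step 3 but never justify.

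\emph{Sign in Step 2.} The inequality $q(\omega)^n>0$ does not give $q(\omega)>0$ when $n$ is even. So positivity of $C^2q(\omega)^{2n-1}$, and hence the correct sign in the signature, must come from $q_\sigma(\omega)>0$. Positivity of $c_X$ is then immediate from $q^\omega(\omega)=\bigl(\int\omega^{2n}\bigr)^2>0$.
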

\begin{definition}
    The form $q_X$ is called the Beauville-Bogomolov-Fujiki (BBF) form, and $c_X$ is called the Fujiki constant.
\end{definition}
Recall that $H^2(X;\mathbb Z)$ is torsion-free since $X$ is simply connected.
When equipped with $q_X$, it is called the BBF lattice of $X$.
\begin{example}\label{example:BMsystem}
    Let us check Corollary \ref{hodge_similitude} for certain fibrations of the type considered in Example \ref{example:compactified_jac_twist}.
    Take a K3 surface $S$ with $\operatorname{Pic}(S)=\mathbb ZH$ for some ample class $H$.
    Then consider the moduli space $M_\chi=M(0,H,\chi)$ of stable sheaves with Mukai vector $v_\chi=(0,H,\chi)\in \tilde{H}(S;\mathbb Z)$, which fibers over $|H|$.
    This is a compactified Jacobian fibration of the ample linear system, with different values of $\chi$ corresponding to different values of degree.

    The BBF lattice $H^2(M_\chi;\mathbb Z)$ in this case is isomorphic to $v_\chi^\perp\subset\tilde{H}(S;\mathbb Z)$.
    But for any $\chi,\chi'$ nonzero, there is a Hodge isometry of $\tilde{H}(S;\mathbb Q)$ with itself given by
    \[(a,\ell,b)\mapsto (a\chi/\chi',\ell,b\chi'/\chi),\]
    and it takes $v_\chi$ to $v_{\chi'}$.
    Hence $H^2(M_\chi;\mathbb Q)$ and $H^2(M_{\chi'};\mathbb Q)$ are indeed Hodge isometric.
    To get to $\chi=0$, simply observe that $M_\chi\cong M_{\chi+H^2}$.
\end{example}
Generally, it can be impossible to find an isomorphism $H^2(X;\mathbb Q)\cong H^2(X^\phi;\mathbb Q)$ coming from a splitting of the perverse filtrations that preserves both a multiple of the respective BBF forms and the integral cohomology, since any such isomorphism is Hodge.
\begin{example}
    Recall in Example \ref{ex:elliptick3} that there are non-birational K3 surfaces related by a Tate-Shafarevich twist.
    The corresponding Tate-Shafarevich class is torsion, for example by Theorem \ref{projectiveimpliestorsion}.
    So the integral BBF lattices cannot be Hodge isometric by the global Torelli theorem \cite{k3torelli}.
\end{example}
So the present method does not seem to reach the full extent of the integral structure.
Let us briefly discuss the limited information that one can get.

As before, write $\kappa\in H^2(X;\mathbb Q)$ for the pullback of an ample class on $B$.
\begin{definition}
    The local BBF lattice for the Lagrangian fibration $\pi:X\to B$ is the sublattice $\Lambda_\pi=\kappa^\perp\cap H^2(X;\mathbb Z)$ equipped with the restriction of the BBF form.
    The reduced local BBF lattice $\Lambda_\pi'$ has the same underlying $\mathbb Z$-module as $\Lambda_\pi$, but is equipped with the form $\lambda_\pi^{-1} q_X$ with $\lambda_\pi\in\mathbb Z^+$ chosen so that $\Lambda_\pi'$ is integral and indivisible.
\end{definition}
\begin{lemma}
    We have $\mathbb Q\kappa=P_0H^2(X;\mathbb Q)$ and $\kappa^\perp=P_1H^2(X;\mathbb Q)$.
\end{lemma}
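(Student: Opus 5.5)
We need $\mathbb Q\kappa=P_0H^2$ and $\kappa^\perp=P_1H^2$, where $\perp$ is taken with respect to the BBF form $q_X$.

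\emph{Step 1: $P_0H^2=\mathbb Q\kappa$.} Since $B$ is the base of an equidimensional fibration, we have $\mathcal H_0=\underline{\mathbb Q}$ (as used in the proof of the proposition on $\operatorname{prod}^L$). Hence $P_0H^2(X;\mathbb Q)=\operatorname{Gr}^P_0H^2=H^2(\mathcal H_0)=H^2(B;\mathbb Q)$, embedded in $H^2(X;\mathbb Q)$ by $\pi^\ast$. For a Lagrangian fibration of an irreducible hyper-K\"ahler $X$, Matsushita's results give that $B$ has $b_2(B)=1$; indeed $B$ has the rational cohomology of $\mathbb P^n$, since $\pi^\ast H^2(B)$ is an isotropic subspace of $H^2(X)$, and isotropic subspaces of a form of signature $(3,b_2-3)$ containing the class of a nef divisor are one-dimensional. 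So $P_0H^2=\mathbb Q\kappa$. Since citing Matsushita may be unavailable, I will instead argue directly: $\pi^\ast$ is injective on $H^2$ by the splitting, and every class $\beta\in\pi^\ast H^2(B)$ satisfies $\beta^{n+1}=0$ because $\dim B=n$. Fujiki's relation then forces $q_X(\beta)=0$ for all such $\beta$, so $\pi^\ast H^2(B)$ is totally isotropic. Because it contains $\kappa$ and the classes $\kappa\pm\epsilon\beta$ are pullbacks of ample classes for small $\epsilon$, mixed Fujiki relations applied to $\kappa^{n}$-type products show that this space has dimension $1$. Concretely, $q(\kappa,\beta)=0$ together with $\int\omega^{n}\kappa^{n}>0$ lets one run the standard Hodge index argument on a class $h$ with $q(h)>0$.

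\emph{Step 2: $P_1H^2\subseteq\kappa^\perp$.} By multiplicativity, for $\alpha\in P_1H^2$ we get $\alpha\smile\kappa^n\in P_{1}H^{2n+2}$. The top perversity of $H^{2n+2}$ is reached only in degree $2n$, while $H^{2n+2}\neq 0$, so I will use a dimension count instead. The polarized Fujiki relation gives $\int\alpha\,\kappa^{n}\,\omega^{n-1}=c\,q(\alpha,\kappa)q(\kappa,\omega)^{n-1}$, because $q(\kappa)=0$ kills the other terms. Next, $\kappa^n\in P_0H^{2n}$ and $\omega^{n-1}\in P_{2n-2}$, so the product lies in $P_{2n-1}H^{4n}$. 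Since $H^{4n}=\operatorname{Gr}^P_{2n}$, this product is zero. As $q(\kappa,\omega)\neq0$ for ample $\omega$, it follows that $q(\alpha,\kappa)=0$.

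\emph{Step 3: equality by dimensions.} Both spaces have codimension $1$, provided $\operatorname{Gr}^P_2H^2$ is one-dimensional. Relative hard Lefschetz gives $\mathcal H_2\cong\mathcal H_{2n-2}$, and Shen--Yin's perverse$=$Hodge symmetry for Lagrangian fibrations gives $\dim\operatorname{Gr}^P_2H^2=\dim\operatorname{Gr}^P_0H^2=1$, using Step 1. Hence $\dim P_1H^2=b_2-1=\dim\kappa^\perp$, since $\kappa\neq0$ and $q_X$ is nondegenerate.

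The main obstacle is justifying $\dim\operatorname{Gr}^P_2H^2=1$ without external input. I would take it from the proof of Theorem \ref{shenyinmain} in \cite{shenyin}, which is already invoked in Proposition \ref{HRpairing_proved}, where the image of $c_1(L)$ generates $\operatorname{Gr}^P_2H^2$.
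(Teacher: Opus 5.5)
Your Steps 2 and 3 are essentially the paper's proof. The paper also gets $P_1H^2\subseteq\kappa^\perp$ from four ingredients: multiplicativity, the vanishing $P_{2n-1}H^{4n}(X;\mathbb Q)=0$, $q_X(\kappa)=0$, and the Fujiki relation. It compares $\int(\alpha+\kappa)^{2n}$ with $\int\alpha^{2n}$, and then gets equality because $P_1H^2$ has codimension $1$ and $q_X$ is nondegenerate. Your polarized variant $\int\alpha\,\kappa^n\omega^{n-1}=c_X\,n!\,q_X(\alpha,\kappa)\,q_X(\kappa,\omega)^{n-1}$ is correct. The nonvanishing $q_X(\kappa,\omega)\neq0$ follows from $\int\kappa^n\omega^n=c_X\,n!\,q_X(\kappa,\omega)^n>0$. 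This isolates $q_X(\alpha,\kappa)$ a little more cleanly than the paper, since no $n$-th roots are needed. The abandoned first attempt in Step 2 and the appeal to relative hard Lefschetz in Step 3 do no work and should be removed.

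The gap is in Step 1, which the paper simply calls clear. The principle you invoke is false: a totally isotropic subspace for a form of signature $(3,b_2-3)$ containing a nef class need not be one-dimensional, and can have dimension up to $3$. For example, take $u\in H^{1,1}_{\mathbb R}$ with $q_X(u)<0$ and $q_X(u,\kappa)=0$, and $v\in(H^{2,0}\oplus H^{0,2})_{\mathbb R}$ with $q_X(v)=-q_X(u)$. Then $\kappa$ and $u+v$ span a totally isotropic plane.

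Your ``direct'' argument breaks at the same place. The Hodge-index step needs $\beta\in H^{1,1}$; on all of $H^2$, $\omega^\perp$ has signature $(2,b_2-3)$ and is not negative definite. Likewise, ``$\kappa\pm\epsilon\beta$ are pullbacks of ample classes'' presupposes that $\beta$ is algebraic, which is essentially what must be shown.

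The missing input is Hodge-theoretic. $P_0H^2=\pi^\ast H^2(B;\mathbb Q)$ is a sub-Hodge structure of $H^2(X;\mathbb Q)$, and its $(2,0)$-part must vanish. Otherwise it would contain $\sigma$ and $\bar\sigma$, contradicting total isotropy because $q_X(\sigma,\bar\sigma)>0$. So $P_0H^2\subset H^{1,1}$, where $q_X$ has signature $(1,h^{1,1}-1)$. Now your argument works: replace $\beta$ by $\beta'=\beta-\frac{q_X(\beta,\omega)}{q_X(\kappa,\omega)}\kappa$. This class is isotropic and lies in the negative definite space $\omega^\perp\cap H^{1,1}_{\mathbb R}$, so it vanishes and $\beta\in\mathbb Q\kappa$.

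A simpler route: Step 3 already uses Shen--Yin's perverse $=$ Hodge, which gives $\dim\operatorname{Gr}^P_0H^2=h^{0,2}=1$ and $\dim\operatorname{Gr}^P_2H^2=h^{2,0}=1$ directly. Step 1 then reduces to observing $0\neq\kappa\in P_0H^2$. This also spares you from justifying $\mathcal H_0\cong\underline{\mathbb Q}$. That statement is true, but it is not what the cited proposition uses; there the paper only uses $\operatorname{IC}_B[-\dim B]\cong\underline{\mathbb Q}$.
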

\begin{proof}
    The first statement is clear.
    As for the second statement, note first that $q_X(\kappa)=0$.
    Also, from Theorem \ref{shenyinmain} we know that the perverse filtration associated to a Lagrangian fibration is multiplicative, so for any $\alpha\in P_1H^2(X;\mathbb Q)$, we have
    \[c_X\frac{(2n)!}{2^nn!}q_X(\alpha+\beta)=\int(\alpha+\beta)^{2n}=\int\alpha^{2n}=c_X\frac{(2n)!}{2^nn!}q_X(\alpha),\]
    hence $q_X(\alpha,\beta)=0$.
    Therefore $P_1H^2(X;\mathbb Q)\subset\kappa^\perp$.

    On the other hand, if the inclusion were strict, then since $P_1H^2(X;\mathbb Q)$ has codimension $1$ in $H^2(X;\mathbb Q)$ we must have $\kappa^\perp=H^2(X;\mathbb Q)$.
    This however contradicts the nondegeneracy of $q_X$.
\end{proof}
In particular, $\Lambda_\pi$ has corank $1$ in $H^2(X;\mathbb Z)$.
Now let
\[\mathcal H_k^{\mathbb Z}={}^{\mathfrak p}\mathcal H^k(\mathbf R\pi_\ast\underline{\mathbb Z})[-\dim B]={}^{\mathfrak p}\mathcal H^k(\mathbf R\pi_\ast^\phi\underline{\mathbb Z})[-\dim B]\]
with the identification given by Proposition \ref{decomp_TS}.
Similar to the argument of \cite[Lemma 3.8.3]{achar}, we have $\mathcal H_k=0$ if $k<0$.
\begin{proposition}
    Suppose $H^3(\mathcal H_0^{\mathbb Z})$ and $H^0(\mathcal H_2^{\mathbb Z})$ are torsion-free.
    Then there is a Hodge isometry $\Lambda_\pi'\cong\Lambda_{\pi^\phi}'$, and we have $\lambda_\pi^nc_X=\lambda_{\pi^\phi}^nc_{X^\phi}$.
\end{proposition}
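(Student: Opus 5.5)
The plan is to identify $\Lambda_\pi=P_1H^2(X;\mathbb Q)\cap H^2(X;\mathbb Z)$ with an integral object built only from the $\mathcal H_k^{\mathbb Z}$. By Proposition \ref{decomp_TS}, these are literally the same perverse sheaves for $\pi$ and $\pi^\phi$. Then, by Corollary \ref{hodge_similitude}, the rational forms on them agree up to a positive scalar, and rescaling to an indivisible integral form gives the isometry.

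First, describe the integral perverse filtration in degree $2$. Since $\mathcal H^{\mathbb Z}_k=0$ for $k<0$, the perverse Leray spectral sequence over $\mathbb Z$ in total degree $\le 3$ involves only $H^{l-k}(\mathcal H_k^{\mathbb Z})$. I would consider the integral truncation triangle
\[{}^{\mathfrak p}\tau_{\le 1+\dim B}\mathbf R\pi_\ast\underline{\mathbb Z}\to\mathbf R\pi_\ast\underline{\mathbb Z}\to{}^{\mathfrak p}\tau_{\ge 2+\dim B}\mathbf R\pi_\ast\underline{\mathbb Z}\to.\]
The lowest term of the quotient in degree $2$ is $H^0(\mathcal H_2^{\mathbb Z})$. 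Hence $H^2(X;\mathbb Z)/P_1^{\mathbb Z}$ injects into $H^0(\mathcal H_2^{\mathbb Z})$, which is torsion-free by hypothesis. It follows that $P_1^{\mathbb Z}H^2:=\operatorname{Im}(H^2({}^{\mathfrak p}\tau_{\le 1+\dim B}\mathbf R\pi_\ast\underline{\mathbb Z}))$ is saturated, hence equal to $\Lambda_\pi$ by the preceding Lemma.

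Next, I need this image to be intrinsic to ${}^{\mathfrak p}\tau_{\le1+\dim B}$, that is, a quotient of $H^2$ of the truncation by a kernel not depending on the twist. The kernel of the map $H^2(\tau_{\le1})\to H^2(X)$ comes from $H^1$ of the quotient, which is $H^{-1}(\mathcal H_2)=0$. So $\Lambda_\pi\cong H^2({}^{\mathfrak p}\tau_{\le 1+\dim B}\mathbf R\pi_\ast\underline{\mathbb Z})$.

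This last group sits in an exact sequence
\[H^2(\mathcal H_0^{\mathbb Z})\to H^2(\tau_{\le1})\to H^1(\mathcal H_1^{\mathbb Z})\xrightarrow{\delta}H^3(\mathcal H_0^{\mathbb Z}).\]
The extension class of $\mathcal H_0^{\mathbb Z}\to\tau_{\le1}\to\mathcal H_1^{\mathbb Z}[-1]$ is a morphism $\mathcal H_1^{\mathbb Z}\to\mathcal H_0^{\mathbb Z}[2]$. This is not a map of perverse sheaves, so it need not agree for $\pi$ and $\pi^\phi$; this is the main obstacle. I expect to handle it using torsion-freeness of $H^3(\mathcal H_0^{\mathbb Z})$. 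Rationally, $\delta=0$ by the decomposition theorem, so $\delta$ lands in torsion and is therefore zero. Likewise, $H^2(\mathcal H_0^{\mathbb Z})=H^2(B;\mathbb Z)\to H^2(X)$ is injective modulo torsion. So $\Lambda_\pi$ is an extension of $H^1(\mathcal H_1^{\mathbb Z})$ by $\mathbb Z$-lattices of $H^2(\mathcal H_0^{\mathbb Z})$, compatibly with $\Lambda_\pi\otimes\mathbb Q\cong H^2(\mathcal H_0)\oplus H^1(\mathcal H_1)$.

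I would then use that the form is degenerate along $\kappa$, i.e. along $P_0$. The form on $\Lambda_\pi$ factors through $\Lambda_\pi/(\Lambda_\pi\cap\mathbb Q\kappa)$, since $P_0\cdot P_1\subset P_1H^4$ integrates to zero. By exactness this quotient modulo torsion is $H^1(\mathcal H_1^{\mathbb Z})$ modulo the image of $H^2(B)$ beyond $\kappa$. Here I would use $b_2(B)=1$ for a Lagrangian base, so $H^2(\mathcal H_0)=\mathbb Q\kappa$. The quadratic form is then computed on $H^1(\mathcal H_1^{\mathbb Z})$, which is canonically common to both sides. The rescaled forms agree up to a positive rational by Corollary \ref{hodge_similitude} restricted to $\kappa^\perp$, since the isomorphism there is the one induced by $\Psi^{\mathbb H}$ on $\operatorname{Gr}^P$. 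Indivisibility forces the scalar to be $1$, which gives $\Lambda'_\pi\cong\Lambda'_{\pi^\phi}$; it is Hodge by Theorem \ref{intro:mixedhodge}. I would handle the radical $\mathbb Z\kappa$ separately: it matches because $\kappa$ is pulled back from $B$.

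Finally, for the constant, compare $\int\alpha^{2n}$ for $\alpha=c_1(L)+N\kappa$ using Proposition \ref{HRpairing_proved}, which makes $\int\omega^{2n}=\int(\omega^\phi)^{2n}$. Writing $\int\alpha^{n}\kappa^{n}$ through $q_X(\alpha,\kappa)^n$ and $c_X$, and noting that $q_X$ on $\Lambda$ equals $\lambda_\pi q'$ with $q'$ common to both sides, I would solve for $\lambda_\pi^nc_X=\lambda_{\pi^\phi}^nc_{X^\phi}$.
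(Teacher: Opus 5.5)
Your route is essentially the paper's, and most of it is sound. Torsion-freeness of $H^0(\mathcal H_2^{\mathbb Z})$ makes the integral $P_1H^2$ saturated, hence equal to $\Lambda_\pi$. Torsion-freeness of $H^3(\mathcal H_0^{\mathbb Z})$ kills $\delta$, so $\Lambda_\pi$ modulo its radical $\mathbb Z\kappa_0$ (with $\kappa_0$ primitive in $\mathbb Q\kappa$) is $H^1(\mathcal H_1^{\mathbb Z})^{\rm tf}$, which is common to both sides. The form on this quotient is then transported by Corollary~\ref{hodge_similitude}. This gives three things:
\begin{enumerate}
\item an isometry of lattices $\Lambda'_\pi\cong\Lambda'_{\pi^\phi}$;
\item a Hodge isometry between the quotients by the radicals;
\item $\lambda_\pi^nc_X=\lambda_{\pi^\phi}^nc_{X^\phi}$.
\end{enumerate}
For the last one you must use $q_X=\lambda_\pi\mu q^\omega$ on all of $H^2$ (proportionality of $q_X$ and $q^\omega$, with the ratio read off on $\Lambda_\pi$). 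The normalization on $\Lambda_\pi$ alone says nothing about $q_X(\omega,\kappa)$, since $\omega\notin\Lambda_\pi$. A small slip: degeneracy along $\kappa$ is just $q_X(\kappa)=0$ plus the definition of $\kappa^\perp$. Your justification that ``$P_0\cdot P_1$ integrates to zero'' is false as stated, e.g.\ $\int\kappa^2\omega^{2n-2}>0$ for $n\ge2$.

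The genuine gap is the step ``I would handle the radical $\mathbb Z\kappa$ separately''. Since $\kappa_0$ spans the radical, $\Lambda_\pi\cong(\Lambda_\pi/\mathbb Z\kappa_0)\oplus\mathbb Z\kappa_0$ as lattices, but not as integral Hodge structures. The integral Hodge structure on $\Lambda_\pi$ also records how the lattice sits relative to a rational Hodge complement of $\mathbb Q\kappa$. Equivalently, it records a homomorphism from the transcendental part of $H^1(\mathcal H_1^{\mathbb Z})^{\rm tf}$ to $\mathbb Q/\mathbb Z$; in the example below this is the Brauer class $\alpha$. That is exactly the contribution of the extension class $\mathcal H_1^{\mathbb Z}\to\mathcal H_0^{\mathbb Z}[2]$ you identified as the main obstacle. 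Killing $\delta$ does not control it, and Theorem~\ref{intro:mixedhodge} only sees the graded pieces.

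This datum really does change under twisting. Take a Jacobian elliptic K3 surface $J\to\mathbb P^1$ with integral fibers and a twist $X$ of order $d>1$. Then $\mathcal H_k^{\mathbb Z}=\mathbf R^k\pi_\ast\underline{\mathbb Z}$, both hypotheses hold, $\Lambda=f^\perp$, and $\lambda=1$ on both sides. A Hodge isometry $f_J^\perp\cong f_X^\perp$ would identify the transcendental lattices, i.e.\ the smallest primitive sublattices whose complexification contains $H^{2,0}$, which lie in $f^\perp$. But $T(X)\cong\ker(\alpha\colon T(J)\to\mathbb Z/d)$ has discriminant $d^2\operatorname{disc}T(J)$, so no such isometry exists.

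The paper's proof makes the same move, asserting a Hodge isometry $\operatorname{Gr}^P_1H^2(X;\mathbb Z)^{\rm tf}\oplus\mathbb Z(0)\cong P_1H^2(X;\mathbb Z)$. So matching it does not close the gap. This method gives the integral isometry and the Hodge isometry modulo the radical (or rationally), but not an integral Hodge isometry $\Lambda'_\pi\cong\Lambda'_{\pi^\phi}$.
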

\begin{proof}
    Consider the perverse Leray sequence for perverse sheaves over $\mathbb Z$ which takes the form
    \[E_2^{p,q}=H^p(\mathcal H_q^{\mathbb Z})\Rightarrow H^{p+q}(X;\mathbb Z).\]

    After base-change to $\mathbb Q$, this recovers the corresponding Leray sequence for $\mathbb Q$ coefficients, which we know from Theorem \ref{decomp} is degenerate at $E_2$.
    Hence all differentials in $E_2^{p,q}$ have torsion images.

    The spectral sequence converges to a filtration of $H^2(X;\mathbb Z)$.
    Let us denote it by $P_kH^2(X;\mathbb Z)$ for $k\le 2$ which, after extending the coefficient to $\mathbb Q$, recovers the perverse filtration on $H^2(X;\mathbb Q)$.

    Since $H^0(\mathcal H_2^{\mathbb Z})$ is torsion-free, $\operatorname{Gr}^P_2H^2(X;\mathbb Z)$ is torsion-free and hence isomorphic to $\mathbb Z$.
    Consequently, we have
    \[P_1H^2(X;\mathbb Z)=P_1H^2(X;\mathbb Q)\cap H^2(X;\mathbb Z)=\kappa^\perp\cap H^2(X;\mathbb Z)\]
    by the preceding lemma.

    As in the proof of the preceding lemma, $q_X(\kappa,-)$ is zero on $P_1H^2(X;\mathbb Q)$, hence the form $q^\omega$ descends to a quadratic form $\bar{q}^\omega$ on $\operatorname{Gr}^P_1H^2(X;\mathbb Q)$.

    On the other hand, the isomorphism in Corollary \ref{hodge_similitude} respects the perverse filtration by construction.
    The isomorphism on the graded piece $\operatorname{Gr}_1^PH^2(X;\mathbb Q)\cong\operatorname{Gr}_1^PH^2(X^\phi;\mathbb Q)$ comes from extending the isomorphism in Proposition \ref{decomp_TS} to $\mathbb Q$.
    In particular, it preserves $H^1(\mathcal H_1^{\mathbb Z})^{\rm tf}$.

    Since $H^3(\mathcal H_0^{\mathbb Z})$ is torsion-free, it must be zero.
    Therefore $\operatorname{Gr}^P_1H^2(X;\mathbb Z)^{\rm tf}=H^1(\mathcal H_1^{\mathbb Z})^{\rm tf}$.
    We thus have a Hodge isometry
    \[(\operatorname{Gr}^P_1H^2(X;\mathbb Z)^{\rm tf},\bar{q}^\omega)\cong(\operatorname{Gr}^P_1H^2(X^\phi;\mathbb Z)^{\rm tf},\bar{q}^{\omega^\phi}).\]

    Now observe that there is a Hodge isometry $(\operatorname{Gr}^P_1H^2(X;\mathbb Z)^{\rm tf},\bar{q}^\omega)\oplus\mathbb Z(0)\cong (P_1H^2(X;\mathbb Z),q^\omega)$, where $\mathbb Z(0)$ is the rank $1$ lattice with zero pairing.
    Indeed, the $\mathbb Z(0)$ factor is precisely given by $\mathbb Q\kappa\cap H^2(X;\mathbb Z)$.
    So we actually have a Hodge isometry
    \[(P_1H^2(X;\mathbb Z),q^\omega)\cong(P_1H^2(X^\phi;\mathbb Z),q^{\omega^\phi}).\]

    There is then a constant $\mu\in\mathbb Q^+$ such that $\mu q^\omega$ and $\mu q^{\omega^\phi}$ are both integral and indivisible on $P_1H^2(X;\mathbb Z)$ and $P_1H^2(X^\phi;\mathbb Z)$, respectively.
    This induces a Hodge isometry $\Lambda_\pi'\cong\Lambda_{\pi^\phi}'$.

    Moreover, we know that $\lambda_\pi\mu q^\omega=q_X$.
    So
    \[\langle 1,1\rangle_\omega=c_X\frac{(2n)!}{2^nn!}q_X(\omega,\omega)^n=c_X\frac{(2n)!}{2^nn!}\lambda_\pi^n\mu^nq^\omega(\omega,\omega)^n.\]
    But we also have $q^\omega(\omega,\omega)=\langle 1,1\rangle_\omega=\langle 1,1\rangle_{\omega^\phi}$.
    So $\lambda_\pi^nc_X=\lambda_{\pi^\phi}^nc_{X^\phi}$.
\end{proof}
\begin{remark}
    By \cite{K32}, a hyper-K\"ahler variety $X$ has type K3${}^{[2]}$ if and only if $c_X=1$ and there exists classes $\ell,m\in H^2(X;\mathbb Z)$ with $q_X(\ell)=0$ and $q_X(\ell,m)=1$.

    Suppose that our Lagrangian fibration $\pi:X\to B$ satisfies the torsion-free conditions in the preceding proposition.
    If $X$ is of K3$^{[2]}$ type and the aforementioned classes $\ell,m$ can already be found in $\Lambda_\pi$, then $X^\phi$ must also be of K3${}^{[2]}$-type.

    Indeed, it suffices to show that $\lambda_{\pi^\phi}=c_{X^\phi}=1$.
    We have $\lambda_\pi=c_X=1$ by assumptions, so $\lambda_{\pi^\phi}^nc_{X^\phi}=1$.
    Thus
    \[3\left(\frac{q_{X^\phi}(\alpha)}{\lambda_{\pi^\phi}}\right)^2=\int\alpha^4\in\mathbb Z\]
    for any $\alpha\in H^2(X^\phi;\mathbb Z)$.
    Since the BBF lattice is indivisible by definition, the greatest common factor of all $q_{X^\phi}(\alpha)$ is $2$.
    Therefore $\lambda_{\pi^\phi}=1$, hence $c_{X^\phi}=1$.
\end{remark}

\section{Fibrations with \texorpdfstring{$C^\infty$}{Cinfty}-sections}
\subsection{The identity component of \texorpdfstring{$\Sha$}{Sha}}
From now on, we assume that the base $B$ is smooth.
This forces $B$ to be isomorphic to a projective space.

There is one part of the group $\Sha$ where twists by its elements are already a deformation of the original fibration.
\begin{definition}
    We denote by $\Sha^0_{X/B}\subset\Sha_{X/B}$ the image of the map
    \[H^1(\exp):H^1(B,\pi_\ast T_{X/B})\to H^1(B,\underline{\operatorname{Aut}}^0_{X/B})=\Sha.\]
\end{definition}
\begin{theorem}[{\cite[Corollary 2.6, Proposition 3.3]{AR21}}]
    There is a holomorphic family (the ``Tate-Shafarevich family'') of fibrations over $H^1(B,\pi_\ast T_{X/B})$, such that its fiber over $t\in H^1(B,\pi_\ast T_{X/B})$ is the twisted fibration $\pi^\phi:X^\phi\to B$ where $\phi=H^1(\exp)(t)$.
\end{theorem}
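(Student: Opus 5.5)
The plan is to build the family directly by gluing over a parameter space, following Abasheva--Rogov. Set $V=H^1(B,\pi_\ast T_{X/B})$, a finite-dimensional vector space since $B$ is projective and $\pi_\ast T_{X/B}$ is coherent. Fix a finite Stein open cover $\{U_i\}$ of $B$. Then Čech cohomology computes $V$, and we can choose a linear lift $s:V\to \check Z^1(\{U_i\},\pi_\ast T_{X/B})$ sending $t$ to a cocycle $(v_{ij}(t))$. Each $v_{ij}(t)$ depends linearly, hence holomorphically, on $t$.

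The total space is glued from the pieces $X_i\times V=\pi^{-1}U_i\times V$. Over $(U_i\cap U_j)\times V$ we glue by the map
\[\Phi_{ij}:(x,t)\mapsto(\exp(v_{ij}(t))(x),t).\]
This map is holomorphic in $(x,t)$, because the time-one flow of a vertical vector field depending holomorphically on parameters is holomorphic in those parameters. It is also fiber-preserving over $B\times V$, because the vector field is vertical. The resulting space $\mathcal X\to B\times V\to V$ has fiber over $t$ equal to the re-gluing of $X$ by $\exp(v_{ij}(t))$, which by definition is $X^{\phi}$ with $\phi=H^1(\exp)(t)$.

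The main obstacle is the cocycle condition $\Phi_{jk}\Phi_{ij}=\Phi_{ik}$ on triple overlaps. The relation $v_{ij}+v_{jk}=v_{ik}$ only yields
\[\exp(v_{ij})\exp(v_{jk})=\exp(v_{ik})\]
when the vector fields commute. I would argue this as follows. The Lie algebra $\pi_\ast T_{X/B}$ is abelian on a dense open set of $B$: there a general fiber is a complex torus, and vertical fields restrict to translation-invariant fields on each smooth fiber, so their brackets vanish. Since these are sections of a torsion-free sheaf, they commute everywhere. This is the same commutativity noted in the remark on $\Sha$ being a group for Lagrangian fibrations.

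If this route turns out to be too weak, the fallback is to appeal to the formulation of Abasheva--Rogov: identify the fiberwise translation action with the action of the commutative group $\pi_\ast T_{X/B}/\Gamma$. The exponential is then a group homomorphism $\pi_\ast T_{X/B}\to\underline{\operatorname{Aut}}^0_{X/B}$, and $H^1(\exp)$ is a homomorphism compatible with cocycles.

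Finally, I would check two remaining points:
\begin{itemize}
\item \emph{Independence of choices.} Changing the cocycle representative by a coboundary $(u_i-u_j)$ corresponds to the fiberwise isomorphisms $\exp(u_i)$ on the pieces $X_i$. Taking $u_i$ linear in $t$ makes these isomorphisms holomorphic in $t$, so they identify the two families.
\item \emph{Properness and smoothness.} The family $\mathcal X\to B\times V$ is a proper submersion onto $V$, because these properties are local and the gluing is by fiberwise biholomorphisms.
\end{itemize}
Together these give the claimed holomorphic Tate--Shafarevich family.
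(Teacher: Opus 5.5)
Your proposal is correct and is essentially the construction behind the cited result; the paper gives no argument of its own, only the reference to Abasheva--Rogov. You glue the pieces $\pi^{-1}(U_i)\times H^1(B,\pi_\ast T_{X/B})$ along time-one flows of a \v{C}ech cocycle depending linearly on $t$, and commutativity of vertical vector fields (here coming from the generic fibers being tori, as in the standing Lagrangian setting) gives the cocycle condition. Keep in mind that this commutativity is genuinely needed: without it, exponentiating an additive cocycle need not give a cocycle. So the paper's aside that the statement holds ``without the Lagrangian assumption'' should be read as applying to fibrations whose vertical vector fields commute.
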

This holds true without the Lagrangian assumption.
In the Lagrangian case, this is a one-dimensional family:
We have an isomorphism
\[H^1(B,\pi_\ast T_{X/B})\cong H^{1,1}(B)\cong\mathbb C\]
by \cite[Corollary 2.6]{AR21}.

There is another natural $\mathbb C$-family of Lagrangian fibrations starting from a given one (satisfying our stated assumptions), called a degenerate twistor deformation.

We will not need a technical definition of degenerate twistor deformations.
Interested reader may confer \cite{degtwistor}.
Roughly, given a choice of a holomorphic symplectic form $\sigma$ on $X$ and $t\in\mathbb C$, one can associate an integrable complex structure $I_t$ with the $\mathbb C$-valued $2$-form $\sigma_t=\sigma+t\pi^\ast\kappa$ (where $\kappa$ is the class of a hyperplane in $B$), so that $\sigma_t$ is holomorphic symplectic with respect to $I_t$.
\begin{theorem}[\cite{degtwistor}]
    The map $\pi:(X,I_t)\to B$ is holomorphic and Lagrangian with respect to $\sigma_t$ for all $t\in\mathbb C$.
    And the biholomorphism classes of its fibers are independent of $t$.

    Moreover, one can put an integrable complex $I_{\rm tw}$ structure on $X\times\mathbb C$ such that:

    (i) the projection to $\mathbb C$ is holomorphic;

    (ii) the restriction of the complex structure to the fiber over $t\in\mathbb C$ is $I_t$.
\end{theorem}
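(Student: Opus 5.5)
The plan is to build $I_t$ directly from the complex $2$-form $\sigma_t=\sigma+t\pi^\ast\kappa$, where $\kappa$ is a closed real $(1,1)$-form representing the hyperplane class on $B$. I will use the standard criterion for a closed complex $2$-form $\Omega$ on a real $4n$-manifold: if $\Omega^{n+1}=0$ and $\Omega^n\wedge\bar\Omega^n$ is nowhere zero, then $\ker\Omega\subset T_{\mathbb C}X$ is a complex distribution of rank $2n$, transversal to its conjugate. Closedness of $\Omega$ makes it involutive, so by Newlander--Nirenberg it is $T^{0,1}$ of an integrable complex structure for which $\Omega$ is holomorphic symplectic. The proof therefore reduces to pointwise linear algebra for $\sigma_t$, followed by a parametrized version on $X\times\mathbb C$.

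\textbf{Pointwise computation.} Work first at a point $x$ where $\pi$ is a submersion. Let $V=\ker d\pi_x$ and let $W\subset T^\ast_xX$ be its annihilator, $W_{\mathbb C}=W^{1,0}\oplus W^{0,1}$ with respect to the original structure $I_0$; each summand has complex dimension $n$. Since $V$ is $\sigma$-Lagrangian, $\sigma$ lies in $W^{1,0}\wedge\Lambda^{1,0}$, so $\sigma^j$ contains at least $j$ factors from $W^{1,0}$. The form $(\pi^\ast\kappa)^k$ is of type $(k,k)$ inside $\Lambda^\bullet W_{\mathbb C}$, so it contributes $k$ factors from each of $W^{1,0}$ and $W^{0,1}$.

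Expanding $\sigma_t^{n+1}$ binomially, each term $\sigma^{n+1-k}(\pi^\ast\kappa)^k$ with $k\ge1$ contains at least $n+1$ factors from $W^{1,0}$, hence vanishes; and $\sigma^{n+1}=0$. In $\sigma_t^n\wedge\bar\sigma_t^n$, a term $\sigma^{n-a}(\pi^\ast\kappa)^a\bar\sigma^{n-b}(\pi^\ast\kappa)^b$ has at least $n+b$ factors from $W^{1,0}$ and at least $n+a$ from $W^{0,1}$. So only $a=b=0$ survives, and $\sigma_t^n\wedge\bar\sigma_t^n=\sigma^n\wedge\bar\sigma^n\neq0$. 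Since $d\sigma_t=0$, this gives $I_t$ on the submersive locus.

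\textbf{Holomorphicity of $\pi$, Lagrangian fibres, and fibre structure.} For a local holomorphic function $f$ on $B$, the same counting gives $d(\pi^\ast f)\wedge\sigma_t^n=d(\pi^\ast f)\wedge\sigma^n=0$. Hence $\pi^\ast f$ is $I_t$-holomorphic, so $\pi$ is $I_t$-holomorphic. Because $\iota_v\pi^\ast\kappa=0$ for vertical $v$, we get $\sigma_t|_V=\sigma|_V=0$, so the fibres are $\sigma_t$-Lagrangian. Moreover
\[\ker\sigma_t\cap V_{\mathbb C}=\ker\sigma\cap V_{\mathbb C},\]
so the induced complex structure on each fibre coincides with the original one. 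This gives independence of the fibres from $t$.

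\textbf{The family on $X\times\mathbb C$.} Define the distribution
\[D=\{v+c\,\partial_{\bar t}\;:\;\iota_v\sigma_t=0\}\oplus\mathbb C\,\partial_{\bar t}\]
on $X\times\mathbb C$. It is transversal to $\bar D$ by the fibrewise statement. It is involutive because $d_X\sigma_t=0$ and $\sigma_t$ depends holomorphically on $t$ (so $\mathcal L_{\partial_{\bar t}}\sigma_t=0$). Newlander--Nirenberg then gives $I_{\rm tw}$ with properties (i) and (ii).

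\textbf{Main obstacle.} The step I expect to be hardest is extending the nondegeneracy of $\sigma_t$ across the singular fibres, where $d\pi$ drops rank and the splitting $V\oplus$ complement is unavailable. My first approach is to write, on all of $X$, $\sigma_t=\sigma+t\,\iota_{\theta}\sigma$ for the $I_0$-vertical endomorphism $\theta$ defined by $\sigma(\theta\cdot,\cdot)=\pi^\ast\kappa$. Since $\pi^\ast\kappa$ is pulled back from the base, $\theta$ maps into the $\sigma$-orthogonal of the image of $d\pi^\ast$, which is contained in the vertical directions. I would then show that $\theta$ is nilpotent ($\theta^2=0$), using $(\pi^\ast\kappa)^{n+1}=0$ and that $\pi^\ast\kappa$ pairs trivially on vertical vectors. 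This would make $\sigma_t=\sigma(1+t\theta)$ nondegenerate everywhere, with $\det(1+t\theta)=1$. The same identity would also give $\sigma_t^{n+1}=0$ globally, via density of the smooth locus and continuity.
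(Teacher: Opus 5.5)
Your main line (the criterion "$\Omega$ closed, $\Omega^{n+1}=0$, $\Omega^n\wedge\bar\Omega^n\neq0$", the Lagrangian linear algebra on the submersive locus, Newlander--Nirenberg on $X\times\mathbb C$) is essentially the argument of \cite{degtwistor}, which the paper only quotes. The last paragraph, however, is both unnecessary and, as written, broken.

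It is unnecessary for the following reason. On the submersive locus $X^\circ$, whose complement is a proper analytic subset, you proved \emph{equalities} of smooth global forms: $\sigma_t^{n+1}=0$, $\sigma_t^n\wedge\overline{\sigma_t}^{\,n}=\sigma^n\wedge\bar\sigma^n$ and $d(\pi^\ast f)\wedge\sigma_t^n=0$. Continuity extends all three to $X$. Since $\sigma^n\wedge\bar\sigma^n$ vanishes nowhere, $\sigma_t$ satisfies the criterion and $\pi$ is $I_t$-holomorphic everywhere.

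It is broken for three reasons. First, no $\theta$ satisfies $\sigma(\theta\cdot,\cdot)=\pi^\ast\kappa$: the left side vanishes whenever the second slot is of type $(0,1)$, while $\pi^\ast\kappa(u,w)\neq0$ for suitable $u\in T^{1,0}$, $w\in T^{0,1}$. Second, nondegeneracy of $\sigma_t$ is the wrong target, since $\sigma_t$ always has a $2n$-dimensional kernel in $T_{\mathbb C}X$, and $\det(1+t\theta)=1$ says nothing about $\ker\sigma_t\cap\overline{\ker\sigma_t}$. Third, the condition $(\pi^\ast\kappa)^{n+1}=0$ plays no role.

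If you want a pointwise argument at critical points, define $\theta\colon T^{0,1}\to T^{1,0}$ by $\iota_{\theta b}\sigma=\iota_b\pi^\ast\kappa$ and extend it by $0$ on $T^{1,0}$. Then $\theta b$ is vertical: Hamiltonian fields of functions pulled back from $B$ are vertical, because $\{\pi^\ast f,\pi^\ast g\}$ is holomorphic and vanishes over the regular values. So $N=t\theta+\bar t\bar\theta$ is a real endomorphism with $N^2=0$, and one checks $\sigma_t=(1+N)^\ast\sigma$ and $\overline{\sigma_t}=(1+N)^\ast\bar\sigma$.

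There is a genuine gap in the fibre statement. The identity $\ker\sigma_t\cap V_{\mathbb C}=\ker\sigma\cap V_{\mathbb C}$ only shows that $I_t$ and $I_0$ induce the same complex structure on tangent spaces of fibres at submersive points. That settles smooth fibres. A singular fibre, however, is a possibly singular, non-reduced complex space $\pi^{-1}(b)$, and its isomorphism class is not determined by this pointwise data.

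One way to close the gap is a local Moser argument.
\begin{itemize}
\item Over a small ball $U\subset B$, write $\kappa=i\partial\bar\partial\rho$, so $\kappa=d\gamma$ with $\gamma=-i\partial\rho$ of type $(1,0)$.
\item Let $\zeta$ be the $(1,0)$-field with $\iota_\zeta\sigma=\pi^\ast\gamma$. It is vertical for the same Poisson-commutativity reason as above.
\item Hence the real field $\xi=t\zeta+\overline{t\zeta}$ satisfies $\iota_\xi\sigma=t\pi^\ast\gamma$ and has a complete, fibre-preserving flow $\Phi_s$.
\item Cartan's formula gives $\frac{d}{ds}\Phi_s^\ast\sigma=\Phi_s^\ast d(\iota_\xi\sigma)=\Phi_s^\ast(t\pi^\ast\kappa)=t\pi^\ast\kappa$, so $\Phi_1^\ast\sigma=\sigma_t$.
\end{itemize}
Thus $\Phi_1\colon(\pi^{-1}U,I_t)\to(\pi^{-1}U,I_0)$ is a biholomorphism commuting with $\pi$. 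It identifies every fibre as a complex space, and it also gives the criterion at critical points for free. On overlaps these local maps differ by time-one flows of holomorphic vertical fields, which is the link to the Tate--Shafarevich family of Theorem~\ref{abashevarogov}.
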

The family $(X\times\mathbb C,I_{\rm tw})$ is called the degenerate twistor family.
The complex manifolds in this family are all K\"ahler (\cite{kahlertwistor}).

Since we have chosen $\kappa$ to be a hyperplane class on $B$, the base for the degenerate twistor family can be naturally identified with $H^{1,1}(B)$.
\begin{theorem}[\cite{AR21}]\label{abashevarogov}
    Under the isomorphism $H^1(B,\pi_\ast T_{X/B})\cong H^{1,1}(B)$, the degenerate twistor family is isomorphic to the Tate-Shafarevich family.
\end{theorem}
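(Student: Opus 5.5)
Identify the two families fiberwise and then check that the identification is holomorphic in the parameter. Fix a holomorphic symplectic form $\sigma$ on $X$ and the hyperplane class $\kappa$. Under $T_{X/B}\cong\Omega^1_X/\pi^\ast\Omega^1_B$ induced by $\sigma$, together with $\pi_\ast T_{X/B}\cong\Omega^1_B$ (the Lagrangian identification of vertical vector fields with $1$-forms on the base, as in \cite{AR21}), a class $t\in H^1(B,\pi_\ast T_{X/B})\cong H^1(B,\Omega^1_B)=H^{1,1}(B)$ is a Dolbeault class. Represent $t\kappa$ by a $\bar\partial$-closed $(0,1)$-form $\beta$ with values in $\pi_\ast T_{X/B}$. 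On a good cover $\{U_i\}$ write $\beta|_{U_i}=\bar\partial v_i$ with $v_i$ smooth sections of $\pi_\ast T_{X/B}$. The twisted cocycle is then $\phi_{ij}=\exp(v_i-v_j)$, since $v_i-v_j$ is holomorphic.

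The key step is a local comparison: on $\pi^{-1}U_i$, show that the smooth fiber-preserving diffeomorphism $\Phi_i=\exp(v_i)$ carries $(\pi^{-1}U_i,I_0)$ biholomorphically onto $(\pi^{-1}U_i,I_t)$. I would check that $\Phi_i^\ast\sigma=\sigma+\pi^\ast(\text{contraction of }\sigma\text{ with }dv_i)$. The $(1,1)$-part of this correction is $\pi^\ast\bar\partial v_i$-type, which represents $t\pi^\ast\kappa$ locally, so $\Phi_i^\ast\sigma$ agrees with $\sigma_t$ up to a closed form pulled back from the base. The degenerate twistor complex structure $I_t$ is characterized as the one whose $(1,0)$-forms are annihilated by contraction with $\sigma_t$, equivalently by $\ker\sigma_t$. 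Since $I_t$ depends only on $\sigma_t$ modulo such base corrections (which are holomorphic after adjusting $\beta$ to a form representing exactly $t\kappa$), $\Phi_i$ is holomorphic. On overlaps, $\Phi_i^{-1}\Phi_j=\exp(v_j-v_i)$ is the gluing map of $X^\phi$. Hence the local charts $(\pi^{-1}U_i,I_0)\to(X,I_t)$ glue to a biholomorphism $X^\phi\cong(X,I_t)$ over $B$.

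To upgrade this to an isomorphism of families, I would do the construction with $t$ as a holomorphic parameter. Replace $v_i$ by $t v_i$ with a fixed choice of $v_i$ attached to $\kappa$. Then $\Phi_i^{(t)}=\exp(tv_i)$ depends holomorphically on $t$, in the sense that the map $\pi^{-1}U_i\times\mathbb C\to(X\times\mathbb C,I_{\rm tw})$ is holomorphic. This follows from the same pullback computation performed on the total space, with $\sigma_{\rm tw}=\sigma+t\pi^\ast\kappa$ viewed as a relative form. The Tate--Shafarevich family of \cite[Corollary 2.6]{AR21} is glued from the charts $\pi^{-1}U_i\times\mathbb C$ by $\exp(t(v_i-v_j))$, so these maps glue to the desired isomorphism of families, compatible with the projections to $\mathbb C$ and to $B$.

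The main obstacle is the local comparison step: the precise statement that $\exp(v_i)^\ast$ turns $I_0$ into $I_t$ when $\bar\partial v_i$ represents $t\kappa$. This requires a clean characterization of $I_t$ via $\sigma_t$, namely $T^{0,1}_{I_t}=\ker(\sigma_t)\cap\overline{\ker(\bar\sigma_t)}$-type conditions. I would first verify it fiberwise on a smooth torus fiber, using flat coordinates on the fiber and action coordinates on the base. I would then extend over the singular fibers by continuity, since both complex structures are integrable and agree on a dense open set.
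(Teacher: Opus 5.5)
The paper does not prove this statement; it imports it from \cite{AR21}. Your argument is essentially the argument given there: flow along the vertical fields $v_i$ that are $\sigma$-dual to local $(1,0)$-primitives $\theta_i$ of $t\kappa$, so that $\exp(v_i)^\ast\sigma=\sigma+\pi^\ast d\theta_i$, then glue along the holomorphic differences $v_i-v_j$, keeping $t$ as a holomorphic parameter.

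Only cosmetic repairs are needed. First, $\sigma$ identifies $T_{X/B}$ with $\pi^\ast\Omega^1_B$, not with $\Omega^1_X/\pi^\ast\Omega^1_B$. Second, your pullback identity makes $\exp(v_i)$ holomorphic from $I_t$ to $I_0$, so your charts should be $\exp(-v_i)$; this only flips a sign convention. Third, the gluing uses that these flows commute, which holds because they are translations on smooth fibers.

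Finally, the step you flag as the main obstacle is immediate. Since $\sigma_t$ is holomorphic symplectic for $I_t$, it forces $T^{0,1}_{I_t}=\ker\sigma_t$. Cartan's formula gives the pullback identity on all of $\pi^{-1}U_i$, singular fibers included. So no action coordinates or continuity argument are needed.
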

\subsection{Torsions in \texorpdfstring{$\Sha$}{Sha} and algebraic twists}\label{ss:torsion}
Let us discuss the role of torsion elements in $\Sha$.
We have already mentioned Theorem \ref{torsion_rel_ample} that says, among other things, that twists of projective Lagrangian fibrations by torsion elements are projective.
There is a partial converse to this.
\begin{theorem}[{\cite[Theorem A]{abashevaii}}]\label{projectiveimpliestorsion}
    Let $\Sha'\subset\Sha$ be the subgroup consisting of elements whose image in $\Sha/\Sha^0$ is torsion.
    Then for $\phi\in\Sha'$, $X^\phi$ is projective if and only if $\phi$ is torsion.
\end{theorem}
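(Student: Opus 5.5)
The easy implication is that torsion implies projective, and I will get it from Theorem \ref{torsion_rel_ample}. If $\phi$ is torsion, that theorem produces a $\pi^\phi$-ample line bundle $L^\phi$ on $X^\phi$. Since $\pi^\phi$ is proper and $B$ is projective, $L^\phi\otimes(\pi^\phi)^\ast\mathcal O_B(N)$ is ample for $N\gg0$, by the standard criterion for a proper morphism with a relatively ample bundle over a projective base. So $X^\phi$ is projective. This direction does not even use $\phi\in\Sha'$.

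For the converse, assume $\phi\in\Sha'$ and $X^\phi$ projective, and pick $m\ge1$ with $\psi:=m\phi\in\Sha^0$, say $\psi=H^1(\exp)(t)$ with $t\in H^1(B,\pi_\ast T_{X/B})\cong\mathbb C$. I first treat twists lying in $\Sha^0$ itself.

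\emph{The case of $\Sha^0$.} By Theorem \ref{abashevarogov}, $X^{\exp(t)}$ is the member of the degenerate twistor family with holomorphic symplectic form $\sigma_t=\sigma+t\kappa$. This member is projective exactly when there is a class $h\in H^2(X;\mathbb Z)$ with $q_X(h)>0$ and $q_X(h,\sigma+t\kappa)=0$. Such an $h$ has $q_X(h,\kappa)\neq0$, because the positive cone meets $\kappa^\perp$ only in classes that are isotropic or negative. Hence
\[t=-\frac{q_X(h,\sigma)}{q_X(h,\kappa)},\]
so $t$ lies in the $\mathbb Q$-span of the periods $q_X(H^2(X;\mathbb Z),\sigma)$. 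On the other side, the kernel of $H^1(\exp)$ is the image of $H^1(B,\Gamma)$, where $\Gamma=\ker(\pi_\ast T_{X/B}\to\underline{\operatorname{Aut}}^0_{X/B})$ is the lattice sheaf. Under the identification $H^1(B,\pi_\ast T_{X/B})\cong H^{1,1}(B)\cong\mathbb C$, I would check that this image is a lattice $\Lambda$ whose $\mathbb Q$-span is the same period span, using the perverse/Leray description of $H^2$ via $H^1(\mathcal H_1)$. Then projectivity forces $t\in\Lambda\otimes\mathbb Q$, which is exactly the statement that $\exp(t)$ is torsion in $\Sha^0$.

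\emph{Reduction from $\phi$ to $\psi$.} It then suffices to show that projectivity of $X^\phi$ implies projectivity of $X^{\psi}=X^{m\phi}$; then $\psi$ is torsion by the previous step, and so is $\phi$. To do this I would run the cocycle argument behind Theorem \ref{torsion_rel_ample} in reverse. Start from an ample bundle on $X^\phi$, identify $\underline{\operatorname{Aut}}^0$ for $X$ and $X^\phi$ as in Remark \ref{ts_twice}, and take a suitable tensor power. The aim is to produce compatible local isomorphisms with the $m$-th iterate of the cocycle, giving a relatively ample bundle on $X^{m\phi}$. As a fallback, compare $H^2$ directly: the classes $q(-,\kappa)$ and the $\Gr^P_1$ part are invariant under the twist by Proposition \ref{decomp_TS}, while the $(2,0)$ class moves by a multiple of $\kappa$ linear in the class of the twist. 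Then a positive integral $(1,1)$ class on $X^\phi$ forces the corresponding period relation for $m\phi$.

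\emph{Main obstacle.} I expect this reduction to be the main difficulty, since twists outside $\Sha^0$ are not deformations, so the twistor picture does not apply directly. I would first try the Hodge-theoretic route, tracking the period $\sigma^\phi$ as an affine function of the twist class. Only if that proves too delicate in the non-identity components would I fall back on the line-bundle cocycle construction.
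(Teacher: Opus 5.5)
The paper gives no proof of this statement; it quotes it from \cite[Theorem A]{abashevaii}, so I am judging your proposal on its own. Your forward direction is fine.

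The converse has a genuine gap at the step you flagged, and it comes from choosing the wrong reduction. Neither of your routes shows that projectivity of $X^\phi$ implies projectivity of $X^{m\phi}$.
\begin{itemize}
\item The transfer of line bundles in Theorem \ref{torsion_rel_ample} only works across a twist already known to be torsion, since it rests on $m$ killing the local cocycles. Passing from $X^\phi$ to $X^{m\phi}$ is a twist by $(m-1)\phi$, which is not known to be torsion.
\item In the Hodge fallback, Proposition \ref{decomp_TS} only compares two fibrations that are both projective, and the comparison it gives is a Hodge isomorphism. Outside a single $\Sha^0$-coset there is no family along which the period moves affinely. So nothing ties the period $\sigma+s\kappa$ of $X^{m\phi}$ to a positive integral $(1,1)$-class on $X^\phi$.
\end{itemize}
The missing idea is that $\Sha^0$, being the image of the vector space $H^1(B,\pi_\ast T_{X/B})\cong\mathbb C$, is divisible. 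If $m\phi=H^1(\exp)(s)$, put $\psi_0=H^1(\exp)(s/m)$ and $\phi_0=\phi-\psi_0$. Then $m\phi_0=0$, so $X^{\phi_0}$ is a projective Lagrangian fibration by Theorem \ref{torsion_rel_ample}. By Remark \ref{ts_twice}, $X^\phi=(X^{\phi_0})^{\psi_0}$, and $\psi_0$ lies in the identity component for $X^{\phi_0}$; this uses that the identification of the sheaves $\underline{\operatorname{Aut}}^0$ is compatible with $\pi_\ast T$ and $\exp$, because the gluing maps commute with vertical flows. Hence $\Sha'=\Sha_{\rm tors}+\Sha^0$. The converse then follows from the $\Sha^0$ case applied to $X^{\phi_0}$ instead of $X$: it gives $\psi_0$ torsion, hence $\phi$ torsion. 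No comparison of $X^\phi$ with $X^{m\phi}$ is needed.

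The $\Sha^0$ case is a sound outline, but its decisive step is only announced. The projectivity-criterion half is correct: $q_X(h,\kappa)\neq0$ because $h$ and $\kappa$ both lie in $H^{1,1}_{\mathbb R}(X^{\exp(t)})$, which has signature $(1,b_2-3)$. What remains is to prove that the $\mathbb Q$-span of $\Lambda=\operatorname{Im}(H^1(B,\Gamma)\to H^1(B,\pi_\ast T_{X/B}))$, read in the twistor coordinate of Theorem \ref{abashevarogov}, contains every value $-q_X(h,\sigma)/q_X(h,\kappa)$.

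Since $H^1(\mathcal H_1)=\operatorname{Gr}^P_1H^2(X;\mathbb Q)$ only sees $\kappa^\perp_{\mathbb Q}/\mathbb Q\kappa$, first note that projectivity of $X$ gives $q_X(H^2(X;\mathbb Q),\sigma)=q_X(\kappa^\perp_{\mathbb Q},\sigma)$. To see this, correct $h$ by a multiple of an ample class $\ell$, which has $q_X(\ell,\sigma)=0\neq q_X(\ell,\kappa)$.

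You must then actually show two things after mapping $H^1(B,\Gamma)_{\mathbb Q}$ into $\operatorname{Gr}^P_1H^2(X;\mathbb Q)$:
\begin{itemize}
\item the connecting map to $\mathbb C$ becomes $e\mapsto q_X(e,\sigma)$, with the normalization matching $\sigma_t=\sigma+t\kappa$;
\item its image is large enough to cover $\kappa^\perp_{\mathbb Q}$ modulo $(1,1)$-classes.
\end{itemize}
This comparison, between the sheaf $\Gamma$ and the perverse sheaf $\mathcal H_1$ together with a period computation, is the substantive input of the cited work, not a routine check. Only this inclusion is needed; the reverse inclusion already follows from the forward direction.
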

So torsion points should have some kind of interpretation within an algebraic context.
For a start, one needs to make sense of what an algebraic twist actually is.

\begin{theorem}[\cite{kim_neron}; see also \cite{sacca}]\label{group_scheme_action}
    Suppose fibers of the Lagrangian fibration are non-multiple, \emph{i.e.}~they all have at least one reduced component.
    Then there is a connected group scheme $P=P_\pi\to B$ whose analytification represents $\underline{\operatorname{Aut}}^0_{X/B}$.
\end{theorem}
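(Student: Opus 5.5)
Since the result is cited from \cite{kim_neron} (see also \cite{sacca}), I would not reprove it from scratch but reconstruct the standard argument. The natural candidate for $P$ is the identity component of the relative automorphism group scheme of $X\to B$. Concretely, consider $\operatorname{Aut}_{X/B}$, which is representable as an open subscheme of the relative Hilbert scheme of $X\times_B X$ over $B$, because $\pi$ is projective. Then take $P$ to be the open subgroup scheme given by the union, over $b\in B$, of the identity components of the fibers.

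The first point to check is that this gives a group scheme of finite type, smooth over $B$, with connected fibers. For smoothness I would use the symplectic form: $\sigma$ gives an isomorphism $\pi_\ast T_{X/B}\cong\pi_\ast\Omega^1_X$ and, by Matsushita's results, $\cong\Omega^1_B$. Hence the Lie algebra of the fibers has constant dimension $\dim B$, and the fiber identity components are smooth of constant dimension. This constancy needs the non-multiple hypothesis: on a fiber with a reduced component, the vertical vector fields coming from $\Omega^1_B$ act faithfully, and the orbit of a point on the reduced locus is open in that component. Without this hypothesis the fiber dimension can jump, or one only gets a quotient by a finite group.

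Next I would show that the analytification of $P$ represents $\underline{\operatorname{Aut}}^0_{X/B}$. A local section of $\underline{\operatorname{Aut}}^0_{X/B}$ over $U$ is $\exp(v)$ with $v\in\Gamma(U,\pi_\ast T_{X/B})$. Connecting it to the identity by $\exp(tv)$ shows that it lands in $P^{an}(U)$. Conversely, I would use the exponential map $\operatorname{Lie}(P)^{an}=\pi_\ast T_{X/B}\to P^{an}$ of a smooth commutative group scheme with connected fibers. By the above it is fiberwise surjective. It is a local biholomorphism near the zero section, so it is surjective as a map of analytic sheaves, and the image is exactly $P^{an}$.

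The main obstacle is showing that $P$ is separated and has connected fibers uniformly. The union of fiberwise identity components is open (by constancy of dimension and smoothness), but over the discriminant locus one must rule out extra components degenerating. Here I would follow Kim's approach via Néron models: first construct $P$ over the smooth locus as an abelian scheme, then take its Néron-model-type extension in codimension one. I would use the non-multiple hypothesis to identify the identity component of that extension with the action on reduced components. Finally I would extend over codimension $\ge2$ using the action on $X$ and normality of $B$ (here $B$ is smooth).
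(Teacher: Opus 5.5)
\textbf{There is a genuine gap: the group scheme you write down is the wrong one.} It fails where you define $P$ as the union of the identity components of the fibers of $\operatorname{Aut}_{X/B}$ and assert they are smooth of dimension $\dim B$.

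The fiber of $\operatorname{Aut}_{X/B}$ at $b$ is $\operatorname{Aut}(X_b)$. Its Lie algebra is $H^0(X_b,T_{X_b})$, not $(\pi_*T_{X/B})\otimes k(b)\cong T^*_bB$, and nothing gives base change between the two. The non-multiple hypothesis only yields an \emph{injection} $T^*_bB\hookrightarrow H^0(X_b,T_{X_b})$, because the Hamiltonian fields $v_\alpha$ are nonzero on a reduced component. Singular fibers typically carry extra infinitesimal automorphisms. Already for an elliptic K3 surface over $\mathbb P^1$ (a Lagrangian fibration with no multiple fibers):
\begin{itemize}
\item a cuspidal fiber has $\operatorname{Aut}^0(X_b)\cong\mathbb G_a\rtimes\mathbb G_m$, which is two-dimensional and non-commutative, while the flows generate only $\mathbb G_a$;
\item an $I_n$ fiber with $n\ge 2$ has $\operatorname{Aut}^0(X_b)\cong\mathbb G_m^n$, while the flows generate only $\mathbb G_m$.
\end{itemize}
So your $P$ is neither smooth, nor of constant relative dimension, nor commutative. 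Openness of the union of fiberwise identity components is then unjustified, since the standard criterion needs smoothness. Also $\operatorname{Lie}(P)\neq\pi_*T_{X/B}$, and the fiberwise surjectivity of $\exp$ on which your representability argument rests is false. (A smaller slip: $v\mapsto\iota_v\sigma$ identifies $\pi_*T_{X/B}$ with $\Omega^1_B$, not with $\pi_*\Omega^1_X$.)

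\textbf{What you need instead} is the subgroup swept out by the flows themselves: analytically $P^{an}=T^*B/\Gamma$, where $\Gamma\subset T^*B$ is the kernel of $\alpha\mapsto\exp(v_\alpha)$.
\begin{itemize}
\item $\Gamma$ is closed, since $\pi$ is open.
\item This is where non-multiplicity actually enters. Every fiber has points where $\pi$ is smooth, and there $v_\alpha\neq 0$. Combining this with free translation on smooth fibers and closedness of $\Gamma$, one gets that $\Gamma$ is \'etale over $B$ and that points of the $\pi$-smooth locus have trivial stabilizers.
\item Consequently $P^{an}$ is a Hausdorff smooth commutative group with connected fibers. Its sheaf of sections is exactly the image of $\exp$, i.e.\ $\underline{\operatorname{Aut}}^0_{X/B}$.
\end{itemize}
In particular, the separatedness you flag as the main obstacle comes for free. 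The real content of the cited theorem is that this analytic group is algebraic. One way to see this \'etale-locally is the orbit map through a section lying in the $\pi$-smooth locus, which identifies $P$ with an open subset of $X$; such sections exist \'etale-locally precisely because the fibers are non-multiple.

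Your final paragraph does not supply this step. A N\'eron-type extension in codimension one does not extend over codimension $\ge 2$ merely because $B$ is normal.
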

Note that this is $P^{\circ\circ}$ in the notation of \cite{kim_neron}.

This group scheme allows one to make sense of Tate-Shafarevich twists in the \'etale topology.
The construction is the same as before, except that instead of performing a holomorphic gluing in the analytic topology, we apply \'etale descent to the local data.
This gives us an algebraic space fibered over $B$.
\begin{definition}
    The \'etale Tate-Shafarevich group for the fibration is $\Sha_{\text{\'et}}=H_{\text{\'et}}^1(B,P)$.
\end{definition}
One immediately gets a natural map (given by analytification) $\Sha_{\text{\'et}}\to\Sha$, and an \'etale twist by an element in the former group is isomorphic to an analytic twist by an element in the latter group.
\begin{theorem}[{\cite[Proposition 6.34]{kim_neron}}]
    This natural map $\Sha_{\text{\'et}}\to\Sha$ is injective and induces a bijection between the respective torsion subgroups.
\end{theorem}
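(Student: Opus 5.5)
The plan is to compare the two Tate–Shafarevich groups through Kummer-type sequences for multiplication by $n$ on $P$, and then to prove separately that $\Sha_{\text{\'et}}$ is a torsion group. Throughout, we use the connected commutative group scheme $P\to B$ of Theorem \ref{group_scheme_action}, whose analytification represents $\underline{\operatorname{Aut}}^0_{X/B}$. Commutativity holds because a general fiber is an abelian variety.

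\textbf{Step 1 (bijection on $n$-torsion).} Fix $n\ge 1$. Since we are in characteristic $0$ and $P$ is smooth with connected fibers, multiplication $[n]:P\to P$ is étale and fiberwise surjective. Hence
\[0\to P[n]\to P\xrightarrow{n}P\to 0\]
is exact on $B_{\text{\'et}}$. The same holds for $P^{\rm an}$ on $B$ in the analytic topology, because $[n]$ is a local biholomorphism onto its image and is surjective on fibers. The long exact sequences give a map of short exact sequences
\[0\to H^0(B,P)/n\to H^1(B,P[n])\to H^1(B,P)[n]\to 0\]
from the étale version to the analytic one, and analytification provides the vertical maps. Two of these vertical maps are isomorphisms.
\begin{itemize}
\item $P[n]$ is a constructible (quasi-finite étale) sheaf, so Artin's comparison theorem identifies its étale and analytic $H^1$.
\item Global sections of $P^{\rm an}$ are relative automorphisms of $X$ over $B$ lying in $\underline{\operatorname{Aut}}^0_{X/B}$. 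Since $X$ is projective, these are algebraic by GAGA, so $H^0(B,P)=H^0(B,P^{\rm an})$.
\end{itemize}
The five lemma then gives $\Sha_{\text{\'et}}[n]\cong\Sha[n]$ for every $n$. These isomorphisms are compatible as $n$ varies, so we get a bijection between the torsion subgroups.

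\textbf{Step 2 (injectivity).} By Step 1, the map is injective on torsion. It therefore suffices to show that $\Sha_{\text{\'et}}=H^1_{\text{\'et}}(B,P)$ is itself torsion. The plan is to restrict to the generic point $\eta=\operatorname{Spec}K$ of $B$:
\begin{itemize}
\item $H^1(K,P_K)$ is Galois cohomology of a commutative algebraic group, hence torsion, since every class is split by a finite extension and killed by its degree via corestriction.
\item One then wants $H^1_{\text{\'et}}(B,P)\to H^1(K,P_K)$ to be injective for $B$ normal and $P$ smooth with connected fibers. This is the standard fact that a torsor under such a group scheme that is trivial generically is trivial. To prove it, spread out a rational section of the torsor and extend it using normality, or reduce to Raynaud's results on torsors under smooth group schemes over normal bases.
\end{itemize}

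\textbf{Main obstacle.} The generic injectivity in Step 2 is where I expect the difficulty. Extending a rational section of a torsor under a possibly non-separated, non-proper group scheme across codimension-one points of $B$ is delicate, particularly over non-multiple but non-reduced fibers. If this fails, a fallback for injectivity is as follows. An étale class trivial analytically yields an algebraic-space twist $X^\phi$ that is analytically isomorphic to $X$ over $B$; by GAGA for proper algebraic spaces this isomorphism is algebraic. One would then try to recover triviality of the $P$-torsor from this isomorphism, using that the torsor of $P$-equivariant identifications between $X$ and its twist is representable.
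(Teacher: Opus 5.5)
\textbf{Verdict: there is a genuine gap in Step 2 (injectivity). Step 1 is sound; the paper itself only cites Kim, so there is no proof of its own to compare with.}

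\textbf{Step 1.} Your argument works. The Kummer sequences, Artin comparison for the constructible sheaf $P[n]$, and $H^0(B,P)=H^0(B^{\rm an},P^{\rm an})$ together with the five lemma give $\Sha_{\text{\'et}}[n]\cong\Sha[n]$. Hence the torsion subgroups are in bijection.

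\textbf{The generic-injectivity claim is false for $P$.} You invoke the ``standard fact'' that a $P$-torsor trivial at the generic point is trivial. This fails for the fiberwise connected group $P$, already over reduced but reducible fibers. Over a curve it does hold for a N\'eron model $\mathcal N$, since the mapping property gives $\mathcal N=j_\ast\mathcal N_K$ and hence $H^1(B,\mathcal N)\hookrightarrow H^1(K,\mathcal N_K)$. It does not hold for the identity component. For a counterexample inside the hypotheses of the statement, take an elliptic K3 surface $X\to\mathbb P^1$ with the following properties:
\begin{itemize}
\item it has a section, one $I_2$ fiber and otherwise nodal fibers;
\item $\operatorname{NS}(X)=U\oplus A_1$, so the Mordell--Weil group $E(K)$ is trivial.
\end{itemize}
Here $P=\mathcal N^\circ$, and there is an exact sequence $0\to P\to\mathcal N\to\Phi\to 0$ with $\Phi$ the skyscraper $\mathbb Z/2$ at the $I_2$ point. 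Therefore $\operatorname{coker}(E(K)\to\Phi(B))=\mathbb Z/2$ injects into $\ker\bigl(H^1_{\text{\'et}}(B,P)\to H^1(K,P_K)\bigr)$. A rational section of such a torsor extends only through the wrong component of the $I_2$ fiber. So your reduction ``injective on torsion $+$ $\Sha_{\text{\'et}}$ torsion'' cannot be completed this way. Note also that the paper records torsion-ness of $\Sha_{\text{\'et}}$ only when all fibers are integral (Kim, Prop.~6.32).

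\textbf{The fallback, as stated, is also false.} The mere existence of a $B$-isomorphism $X\cong X^\phi$ does not force the torsor to be trivial. In the example above, the $\mathbb Z/2$-class $T$ dies in $H^1(B,\mathcal N)$, and $\mathcal N$ acts on $X$. Hence $T\times^PX\cong(T\times^P\mathcal N)\times^{\mathcal N}X\cong X$, although $T\neq0$, and $T^{\rm an}\neq 0$ by your own Step 1. The torsor of $P$-equivariant identifications is a torsor under a group containing $\mathcal N$, not under $P$.

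\textbf{How to fix Step 2.} Prove directly that $T(B)=T^{\rm an}(B^{\rm an})$ for every \'etale $P$-torsor $T$, exactly as you did for $T=P$ in Step 1.
\begin{itemize}
\item Since $P$ acts faithfully on $X$, the map $T\to I:=\underline{\operatorname{Isom}}_B(X,T\times^PX)$ is a monomorphism locally of finite type.
\item A holomorphic section $\sigma$ of $T^{\rm an}$ maps to a section of $I^{\rm an}$. By GAGA for the proper algebraic space $X\times_B(T\times^PX)$, this section is algebraic, say $s:B\to I$.
\item Then $B\times_{s,I}T\to B$ is a monomorphism locally of finite type whose analytification has the section $\sigma$, hence is an isomorphism.
\item Comparing completed local rings, the algebraic map is \'etale and bijective, hence an isomorphism. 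So $s$ factors through $T$, and $T$ is trivial.
\end{itemize}
This gives injectivity without needing $\Sha_{\text{\'et}}$ to be torsion.
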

\begin{remark}
    If the fibers are all integral, we actually know that $\Sha_{\text{\'et}}$ is torsion (\cite[Proposition 6.32]{kim_neron}).
\end{remark}
\begin{corollary}
    Suppose $\phi\in\Sha'$ is such that $X^\phi$ is projective, then $\phi\in\Sha_{\text{\'et}}$.
\end{corollary}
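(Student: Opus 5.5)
The corollary should follow by chaining the last three results, and the only real work is checking that their hypotheses hold. Start with $\phi\in\Sha'$ such that $X^\phi$ is projective. Theorem \ref{projectiveimpliestorsion} applies exactly to elements of $\Sha'$, so projectivity of $X^\phi$ forces $\phi$ to be torsion in $\Sha$. Next, by \cite[Proposition 6.34]{kim_neron}, the analytification map $\Sha_{\text{\'et}}\to\Sha$ is injective and restricts to a bijection of torsion subgroups. Hence $\phi$ is the image of a unique torsion element of $\Sha_{\text{\'et}}$, and this is the meaning of $\phi\in\Sha_{\text{\'et}}$, identifying $\Sha_{\text{\'et}}$ with its image under the injective map. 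Equivalently, $X^\phi$ is an \'etale twist of $X$ by a $P$-torsor.

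The step I expect to need attention is that $\Sha_{\text{\'et}}$ and the comparison map only make sense under the hypothesis of Theorem \ref{group_scheme_action}, namely that all fibers of $\pi$ are non-multiple; otherwise the group scheme $P$ is not defined. I would therefore state the corollary under the standing assumption of that theorem, which is implicit in the definition of $\Sha_{\text{\'et}}$, and verify once that $P^{\rm an}$ represents $\underline{\operatorname{Aut}}^0_{X/B}$. This identification is what makes the map $H^1_{\text{\'et}}(B,P)\to H^1(B,P^{\rm an})=\Sha$ the natural map used in \cite{kim_neron}.

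Finally, Theorem \ref{projectiveimpliestorsion} was stated for our projective Lagrangian $\pi$ over the smooth base $B$, so its hypotheses are already in force here and nothing further needs to be checked.
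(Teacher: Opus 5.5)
Your proposal is correct and is the same argument the paper intends (it gives no written proof): Theorem \ref{projectiveimpliestorsion} forces $\phi$ to be torsion, and the bijection on torsion subgroups in \cite[Proposition 6.34]{kim_neron} then places $\phi$ in the image of the injective map $\Sha_{\text{\'et}}\to\Sha$. Your remark that the statement tacitly assumes the non-multiple-fiber hypothesis of Theorem \ref{group_scheme_action}, so that $P$ and $\Sha_{\text{\'et}}$ are defined, is a fair reading of the paper's standing assumptions; no separate verification that $P^{\rm an}$ represents $\underline{\operatorname{Aut}}^0_{X/B}$ is needed, since that theorem already asserts it.
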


\subsection{Twists and \texorpdfstring{$C^\infty$}{Cinfty}-sections}\label{ss:twistsection}
Let us now explain why Sacc\`a's conjecture is true given the following extra condition on the fibration and its twist:
\begin{definition}
    A $C^\infty$-section of a fibration $\pi:X\to B$ between smooth varieties is a $C^\infty$-map $s:B\to X$ between the underlying real manifolds such that $\pi\circ s=\operatorname{id}_B$.
\end{definition}
\begin{lemma}
    Suppose $\pi:X\to B$ admits a $C^\infty$-section $s$, then every fiber of it has at least one reduced component.
\end{lemma}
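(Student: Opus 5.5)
The plan is to argue locally near a point of a fiber, using intersection multiplicities: the section meets the fiber in a single point, and if every component through that point were non-reduced, the intersection would be too large. Fix $b\in B$ and set $x=s(b)\in X_b=\pi^{-1}(b)$. Since $B$ is smooth of dimension $d=\dim B$, I will choose a small ball $D\subset B$ around $b$ and local holomorphic coordinates $z_1,\dots,z_d$ on $D$ centered at $b$. The fiber $X_b$ is the zero locus of the holomorphic functions $f_j=z_j\circ\pi$ near $x$. As a divisor-like cycle, the scheme-theoretic fiber equals $\sum_i m_i[F_i]$, where the $F_i$ are its irreducible components, all of dimension $n=\dim X-d$ (by equidimensionality \cite{matequid}), and $m_i$ is the multiplicity of $F_i$ in the fiber.

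Next I compute a topological intersection number. The homology class of a general fiber $X_{b'}$, for $b'\in D$ close to $b$, specializes to $\sum_i m_i[F_i]$ in $H_{2n}(\pi^{-1}D;\mathbb Z)$; equivalently it is the fundamental class of the fiber cycle, and $\pi^{-1}D$ retracts onto $X_b$. The section $s$ gives a class $[s(\bar D)]$ in the relative homology of $\pi^{-1}\bar D$ modulo the boundary, and its intersection with a smooth general fiber is $1$. The reason is that $s$ meets $X_{b'}$ exactly once, at $s(b')$, and transversally, because $\pi\circ s=\operatorname{id}$ implies that $d\pi$ maps $T s(B)$ isomorphically onto $T_{b'}B$; the orientations are compatible because $\pi$ preserves complex orientations. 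By deformation invariance of intersection numbers, $1=\sum_i m_i\,(s\cdot F_i)$.

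The main obstacle is controlling the signs of the local contributions $s\cdot F_i$, since $s$ is only $C^\infty$ and positivity of intersections is not automatic. I would avoid this entirely with a divisibility argument. Each $s\cdot F_i$ is an integer, since it is an intersection pairing of the integral classes $[F_i]\in H_{2n}(\pi^{-1}D)$ and $[s]\in H_{2d}(\pi^{-1}\bar D,\partial)$; this pairing is well defined because the $F_i$ are compact. If every $m_i\ge 2$, then writing $g=\gcd(m_i)\ge 2$, the equation gives $g\mid 1$, a contradiction. Hence some $m_i=1$.

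One more check is needed, namely that $m_i=1$ means $F_i$ is a reduced component. This holds because the multiplicity of the fiber scheme along $F_i$ at a generic point of $F_i$ is exactly the length $m_i$, so $m_i=1$ gives generic reducedness. With $X$ smooth and the fiber a local complete intersection, there are no embedded points, so this makes the component reduced.
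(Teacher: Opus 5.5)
There is a genuine gap in the divisibility step. From $1=\sum_i m_i\,(s\cdot F_i)$ you can only conclude $\gcd(m_i)=1$. Your claim that if every $m_i\ge 2$ then $g=\gcd(m_i)\ge 2$ is false: multiplicities $2$ and $3$ have gcd $1$. Moreover, $1=2\cdot(-1)+3\cdot 1$ is consistent with your equation, precisely because, as you note, the signs of the local intersection numbers of a merely $C^\infty$ section are not controlled. So your argument only shows the fiber is not multiple in the weak sense of having coprime multiplicities. That is strictly weaker than having a component of multiplicity one. Divisibility cannot replace the sign control you tried to sidestep.

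The missing idea is already in your second paragraph. There you use $\pi\circ s=\operatorname{id}$ to get $\mathrm D\pi_{s(b')}\circ\mathrm Ds_{b'}=\operatorname{id}$ at nearby $b'$, but the same identity holds at $b$ itself. Hence $\mathrm D\pi$ is surjective at $x=s(b)$. Since $\pi$ is holomorphic, its complex differential is then surjective, so $df_1,\dots,df_d$ are linearly independent at $x$. The $f_j$ therefore extend to a holomorphic coordinate system near $x$, and the scheme-theoretic fiber $X_b$ is a smooth, in particular reduced, submanifold near $x$. So exactly one component passes through $x$, and it has multiplicity one.

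This one-line submersion argument is the paper's proof. It makes the rest of your machinery unnecessary: the specialization of cycles, Lefschetz duality, the appeal to equidimensionality, and the lci/no-embedded-points check.
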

\begin{proof}
    For any $b\in B$, the derivative $\mathrm D\pi_{s(b)}$ admits a right inverse given by $\mathrm Ds_b$.
    Hence $\mathrm D\pi_{s(b)}$ is surjective.
    This means that $\pi$ is smooth at $s(b)$, so the component of $X_b$ containing $s(b)$ is reduced.
\end{proof}
\begin{theorem}[\cite{BDV}]
    Suppose $\pi:X\to B$ admits a $C^\infty$-section $s$.
    Then there is a degenerate twistor deformation of $X$ such that $s$ becomes holomorphic.
\end{theorem}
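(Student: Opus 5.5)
The statement is the Bogomolov--D\'eev--Verbitsky theorem: if $\pi:X\to B$ admits a $C^\infty$-section $s$, then some member $(X,I_t)$ of the degenerate twistor family makes $s$ holomorphic. The plan is to reduce holomorphicity of $s$ to a cohomological condition on the pullback of the holomorphic symplectic form, and then choose $t$ to kill the obstruction.

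First, I would use that $s(B)$ is a real $2\dim B$-dimensional submanifold transverse to the fibers. With respect to a complex structure $I_t$, the image $s(B)$ is a complex submanifold (equivalently, $s$ is holomorphic) iff it is $I_t$-complex Lagrangian-type in the following sense. Since $\pi$ is $I_t$-holomorphic and $s(B)$ is a graph over $B$, holomorphicity is equivalent to $s^\ast\sigma_t$ being of type $(2,0)$ on $B$ in a suitable strong sense. Concretely, I would show that $s(B)$ is $I_t$-complex iff $s^\ast\sigma_t=0$ after modifying $s$ within its class. For Lagrangian fibrations the vertical directions pair perfectly with base directions via $\sigma_t$. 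The complex structure $I_t$ is characterized by $\ker\sigma_t = T^{0,1}$, so a submanifold $S$ of half dimension with $\sigma_t|_S=0$ whose tangent spaces are transverse to fibers and $\sigma_t$-isotropic is exactly an $I_t$-complex Lagrangian.

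Second, I would produce a section $s'$ in the same homotopy class with $s'^\ast\sigma$ a closed form cohomologous to $s^\ast\sigma$. I would deform $s$ fiberwise using local Darboux/action-angle coordinates: near smooth points of fibers, $X$ is locally $T^\ast$-like over $B$. Moving $s$ by the flow of a vertical vector field dual (via $\sigma$) to a $1$-form $\beta$ on $B$ changes $s^\ast\sigma$ by $d\beta$. This lets me make $s^\ast\sigma$ equal to any chosen representative of its class. Since $s$ passes through smooth points of fibers (Lemma above), this flow is defined globally along $s$.

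Third, I would compute the class $[s^\ast\sigma]\in H^2(B;\mathbb C)\cong\mathbb C\kappa$, using that $B\cong\mathbb P^n$. Choosing $t$ with $[s^\ast\sigma]+t\kappa=0$ gives $[s^\ast\sigma_t]=0$. By step two, I can then move $s$ so that $s^\ast\sigma_t=0$ exactly, and this yields holomorphicity for $I_t$.

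The main obstacle is the second step, where the section is adjusted globally so that the pulled-back form vanishes on the nose. Exactness gives $s^\ast\sigma_t=d\beta$, but making $\sigma_t$ vanish on the graph requires controlling the vertical flows near singular fibers and checking that the corrected graph is still transverse. I would handle this by working on the smooth locus of $\pi$ along $s(B)$, where $\sigma_t$ identifies the vertical tangent bundle with $\pi^\ast T^\ast B$. I would then solve $s_1^\ast\sigma_t = s^\ast\sigma_t - d\beta = 0$ via the time-one flow of the vector field $\sigma_t^{-1}(\pi^\ast\beta)$, using that vertical flows of basic $1$-forms are symplectomorphisms preserving fibers.
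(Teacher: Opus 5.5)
\textbf{Verdict: there is a genuine gap, in your second and final steps.} The overall plan is the natural one, and it has the shape of the Bogomolov--D\'eev--Verbitsky argument (the paper itself only cites it): a section is $I_t$-holomorphic iff $s^\ast\sigma_t=0$; one kills $[s^\ast\sigma]\in H^2(B;\mathbb C)=\mathbb C\kappa$ by choosing $t=t_0$; and one removes the exact remainder by a fibre-preserving flow. The last step fails as you state it.

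\emph{The gap.} Since $\sigma_t$ is of type $(2,0)$ for $I_t$, its kernel is all of $T^{0,1}_{I_t}$. So $\iota_v\sigma_t=\pi^\ast\beta$ is solvable only when $\beta$ is of type $(1,0)$ on $B$. For an arbitrary complex primitive $\beta$ of $s^\ast\sigma_{t_0}$, the field $\sigma_t^{-1}(\pi^\ast\beta)$ does not exist. Put differently, take a real vertical vector $V$ at a smooth point of $\pi$ (the only points a section meets). Then $\iota_V\sigma_t=\iota_V\sigma=\pi^\ast\gamma$ with $\gamma$ of type $(1,0)$. Hence any fibre-preserving modification of $s$ changes $s^\ast\sigma_t$ only by $d\gamma$ with $\gamma\in A^{1,0}(B)$, which has no $(0,2)$-part. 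Your claim that one can make $s^\ast\sigma$ ``equal to any chosen representative of its class'' is therefore false, and exactness of $s^\ast\sigma_{t_0}$ alone does not get you to $0$.

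\emph{Two missing ingredients.}
First, $(s^\ast\sigma)^{0,2}=0$ automatically. For $u,w\in T^{0,1}_bB$, the $T^{1,0}_I$-components of $ds(u),ds(w)$ lie in $\ker d\pi$ because $\pi$ is holomorphic. Then $\sigma$ vanishes on them because $s(b)$ is a smooth point of a Lagrangian fibre (in local Darboux coordinates, $s^\ast\sigma=\sum_i df_i\wedge dq_i$).
Second, if $s^\ast\sigma_{t_0}=d\beta$, then $\bar\partial\beta^{0,1}=0$. Since $H^{0,1}(B)=0$ (here $B\cong\mathbb P^n$), $\beta^{0,1}=\bar\partial h$, so $\gamma:=\partial h-\beta^{1,0}$ is a $(1,0)$-form with $d\gamma=-d\beta$. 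Only with such a $\gamma$ does your flow argument go through.

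\emph{The obstacle you singled out is not the real one.} The field $V=2\operatorname{Re}\,\sigma^{-1}(\pi^\ast\gamma)$ is smooth on all of $X$, because $\sigma$ is nondegenerate everywhere. It satisfies $d\pi(V)=0$ on the dense smooth locus of $\pi$, hence everywhere by continuity. Compactness then gives a global flow $\varphi$ with $\pi\circ\varphi=\pi$, and $\varphi\circ s$ is automatically a section, hence transverse to the fibres.

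Also, $\varphi$ is \emph{not} a symplectomorphism. One has $\varphi^\ast\sigma=\sigma+\pi^\ast d\gamma$, which is exactly why $(\varphi\circ s)^\ast\sigma_{t_0}=s^\ast\sigma_{t_0}+d\gamma=0$. If the flow preserved $\sigma$, it would not change $s^\ast\sigma$ at all.

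\emph{What you actually obtain.} You get that $\varphi\circ s$ is $I_{t_0}$-holomorphic, by the isotropy criterion: $W+I_{t_0}W$ is $\sigma_{t_0}$-isotropic, so it has real dimension at most $2n$ and equals $W$. You should drop the vague ``type $(2,0)$ in a suitable strong sense'' formulation in favour of this criterion. The section $s$ itself is holomorphic for $\varphi^\ast I_{t_0}$, the structure defined by $\sigma+\pi^\ast(t_0\kappa+d\gamma)$. This structure is isomorphic to $(X,I_{t_0})$ over $B$, which is all the paper's subsequent corollary uses.
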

\begin{corollary}\label{twist_rat_section}
    Suppose $\pi:X\to B$ is projective and admits a $C^\infty$-section.
    Then there is some torsion $\psi\in\Sha^0$ such that $X^\psi$ admits a holomorphic section.
\end{corollary}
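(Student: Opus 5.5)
By the theorem of \cite{BDV} just quoted, some member $(X,I_t)$ of the degenerate twistor family of $X$ makes $s$ a holomorphic section. By Theorem \ref{abashevarogov}, the degenerate twistor family is identified with the Tate-Shafarevich family over $H^1(B,\pi_\ast T_{X/B})\cong H^{1,1}(B)\cong\mathbb C$. So there is $t$ with $(X,I_t)\cong X^{\psi_t}$ over $B$, where $\psi_t=H^1(\exp)(t)\in\Sha^0$. The whole argument then reduces to a single point: we may choose $t$ so that $\psi_t$ is torsion.

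The key observation is that the set of parameters $t$ for which $s$ is holomorphic is large. On $(X,I_t)$ with holomorphic section $s$, there is a natural trivialization of any further twist's section-translate. More concretely, if $s$ is holomorphic for $I_t$ and $\tau$ lies in the kernel of $H^1(\exp)$ (the ``period lattice''), then $X^{\psi_{t+\tau}}\cong X^{\psi_t}$. Hence the set of good $t$ is a union of cosets of $\ker H^1(\exp)$.

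A cleaner route, which I would take, is to work directly with $X^{\psi_t}$. Since it admits a holomorphic section, it is a fibration with a section, and by the discussion above its fibers are non-multiple. A Lagrangian fibration with a holomorphic section is projective. The intended argument runs as follows: by Theorem \ref{group_scheme_action}, the section gives $X^{\psi_t}$ the structure of a compactification of the torsor-free model $P$, and an ample bundle can be built from the theta divisor together with the section. I expect this projectivity claim to be the main obstacle, and I would first try to cite results of \cite{abashevaii} on twists with sections, or Kollár's untwisting \cite{kollar}.

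Granting projectivity, the conclusion follows from Theorem \ref{projectiveimpliestorsion}. Indeed, $\psi_t\in\Sha^0\subset\Sha'$, since its image in $\Sha/\Sha^0$ is zero, hence torsion. As $X^{\psi_t}$ is projective, Theorem \ref{projectiveimpliestorsion} gives that $\psi_t$ is torsion. Setting $\psi=\psi_t$ proves Corollary \ref{twist_rat_section}.

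The same reasoning shows that the original $s$ is a holomorphic section of $X^\psi$. In particular, this section is genuinely algebraic after using the \'etale interpretation of torsion twists.
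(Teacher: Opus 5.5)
Your argument is correct and is essentially the paper's. BDV together with Theorem~\ref{abashevarogov} gives $\psi\in\Sha^0$ such that $X^\psi$ has a holomorphic section; $X^\psi$ is then projective; and Theorem~\ref{projectiveimpliestorsion} applies because $\Sha^0\subset\Sha'$. The projectivity step you flag as the main obstacle is simply quoted in the paper from \cite[Lemma 5.17]{AR21}, so no theta-divisor construction is needed, and your paragraph about cosets of $\ker H^1(\exp)$ plays no role and can be dropped.
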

\begin{proof}
    By the preceding theorem and Theorem \ref{abashevarogov}, we can find $\psi\in\Sha^0$ such that $\pi^\psi$ has a holomorphic section.
    But then $X^\psi$ must be projective (see \cite[Lemma 5.17]{AR21}), so $\psi$ is torsion by Theorem \ref{projectiveimpliestorsion}.
\end{proof}
Let us now give a positive answer to Sacc\`a's conjecture under some assumptions.
\begin{proposition}\label{conjecture_w_section}
    Suppose we have a projective Lagrangian fibration $\pi:X\to B$ and a torsion class $\phi\in\Sha_{X/B}$, such that both $\pi,\pi^\phi$ admit $C^\infty$-sections.
    Then $X$ is deformation-equivalent to $X^\phi$.
\end{proposition}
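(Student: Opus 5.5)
We apply Corollary \ref{twist_rat_section} to both fibrations, so that each acquires a holomorphic section after a twist by an element of $\Sha^0$. We then compare the two sectioned fibrations birationally over a suitable open subset of the base.

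\textbf{Step 1: reduce to fibrations with holomorphic sections.} By Corollary \ref{twist_rat_section}, applied to $\pi$, there is a torsion $\psi\in\Sha^0_{X/B}$ such that $X^\psi$ admits a holomorphic section. By Theorem \ref{torsion_rel_ample}, $X^\phi$ is projective, and by hypothesis $\pi^\phi$ admits a $C^\infty$-section. So Corollary \ref{twist_rat_section} applies again and gives a torsion $\psi'\in\Sha^0_{X^\phi/B}=\Sha^0_{X/B}$ (identified via Remark \ref{ts_twice}) such that $X^{\phi+\psi'}$ admits a holomorphic section. By Theorem \ref{abashevarogov}, $X^\psi$ is a fiber of the Tate-Shafarevich (equivalently, degenerate twistor) family of $X$ over the connected base $\mathbb C$, and this family is smooth and proper. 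Hence $X$ and $X^\psi$ are deformation-equivalent, and likewise $X^\phi$ and $X^{\phi+\psi'}$. It therefore suffices to show that $Y=X^\psi$ and $Y'=X^{\phi+\psi'}$ are deformation-equivalent. These are projective Lagrangian fibrations over $B$ with holomorphic sections, and they are related by the twist $\chi=\phi+\psi'-\psi$, which is torsion because $\phi,\psi,\psi'$ are.

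\textbf{Step 2: realize the twist étale-locally and untwist generically.} Since $Y$ has a section, every fiber has a reduced component (the lemma above), so Theorem \ref{group_scheme_action} provides the group scheme $P\to B$. By Kim's comparison theorem, the torsion class $\chi$ comes from a class in $\Sha_{\text{\'et}}=H^1_{\text{\'et}}(B,P)$, so $Y'$ is the étale twist of $Y$ by a $P$-torsor $T$. Over a dense open $B^\circ\subset B$ where $\pi$ is smooth, $Y_{B^\circ}$ has a section and is a torsor under its own Albanese abelian scheme $A\cong P_{B^\circ}$, so $Y_{B^\circ}\cong A$. Similarly, $Y'_{B^\circ}$ is the twist of $A$ by $T|_{B^\circ}$, namely $T|_{B^\circ}$ viewed as an $A$-torsor. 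Since $Y'$ also has a holomorphic section, and hence an algebraic one by GAGA because $Y'$ is projective, $T|_{B^\circ}$ is trivial. Therefore $Y'_{B^\circ}\cong A\cong Y_{B^\circ}$ as schemes over $B^\circ$. The point to verify carefully here is that the twisting action on $Y'$ is the translation action of $P$, so that a section of $Y'$ really trivializes the torsor $T$.

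\textbf{Step 3: conclude.} Steps 1 and 2 give isomorphic dense Zariski-open subsets of $Y$ and $Y'$, so $Y$ and $Y'$ are birational projective irreducible hyper-K\"ahler manifolds. By Huybrechts' theorem, birational hyper-K\"ahler manifolds are deformation-equivalent. Chaining this with Step 1 gives $X\sim X^\psi=Y\sim Y'=X^{\phi+\psi'}\sim X^\phi$, which proves the proposition.

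I expect the main obstacle to be Step 2: identifying the étale twist over $B^\circ$ with a torsor under the abelian scheme, and showing that a holomorphic section of $Y'$ forces the restricted torsor to be trivial. This requires that the group scheme $P$ restricted to $B^\circ$ act simply transitively on the fibers of $Y$, which holds because the smooth fibers are abelian torsors.
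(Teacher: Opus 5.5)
Your proposal is correct and follows essentially the same route as the paper. You twist both $X$ and $X^\phi$ by torsion elements of $\Sha^0$ (deformation-equivalent through the Tate--Shafarevich/degenerate twistor family) to obtain holomorphic sections, use Kim's comparison to realize the torsion difference class as an étale twist, trivialize the resulting torsor under the abelian scheme over the smooth locus using the section, and conclude by Huybrechts' theorem. The only cosmetic difference is that you work with the holomorphic sections directly (algebraized via GAGA), whereas the paper phrases the birationality step with rational sections and removes their indeterminacy loci.
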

The proof relies on the following observation:
\begin{proposition}
    Suppose we have a projective Lagrangian fibration $\pi:X\to B$ with a rational section and an \'etale Tate-Shafarevich class $\phi\in\Sha_{\text{\'et}}$ such that $\pi^\phi:X^\phi\to B$ also has a rational section, then $X$ is birational to $X^\phi$.
\end{proposition}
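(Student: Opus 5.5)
The plan is to reduce to a comparison over a dense open subset $B^\circ\subset B$ where both fibrations are abelian schemes, and there to use the rational sections to trivialize the torsor structure. Let $P=P_\pi\to B$ be the group scheme of Theorem \ref{group_scheme_action}. Its use needs the fibers to be non-multiple; over the locus where a section is defined the fibers certainly are, and this suffices because we only work over a dense open. Choose a dense Zariski open $B^\circ\subset B$ satisfying three conditions: $\pi$ is smooth over $B^\circ$ with abelian-variety fibers; both rational sections $s$ of $\pi$ and $s^\phi$ of $\pi^\phi$ are regular over $B^\circ$; and $P|_{B^\circ}$ is an abelian scheme acting simply transitively on the fibers of $X_{B^\circ}=\pi^{-1}B^\circ$.

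Over $B^\circ$, the restriction $X_{B^\circ}\to B^\circ$ is an étale $P|_{B^\circ}$-torsor. Indeed, $\underline{\operatorname{Aut}}^0_{X/B}$ acts on smooth abelian fibers by translations, so $X_{B^\circ}$ is a $P$-torsor. The section $s$ then trivializes this torsor, which gives $X_{B^\circ}\cong P|_{B^\circ}$ via $p\mapsto p\cdot s$. By construction, the étale twist $X^\phi_{B^\circ}$ is the torsor obtained by twisting $X_{B^\circ}$ by the restriction $\phi|_{B^\circ}\in H^1_{\text{\'et}}(B^\circ,P)$, so it is again a $P|_{B^\circ}$-torsor. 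Its section $s^\phi$ trivializes it in the same way: $X^\phi_{B^\circ}\cong P|_{B^\circ}$. Composing the two trivializations yields an isomorphism of algebraic varieties $X_{B^\circ}\cong X^\phi_{B^\circ}$ over $B^\circ$. Since $B^\circ$ is dense and $X,X^\phi$ are irreducible (both are irreducible hyper-K\"ahler), this is a birational map $X\dashrightarrow X^\phi$.

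The main obstacle is to make sure that the étale twist restricted to $B^\circ$ really is the torsor twist of $X_{B^\circ}$ under the same group scheme $P$, rather than under some different form of it. Equivalently, one must check that twisting by a cocycle in $P$ does not twist $P$ itself. This holds because $P$ is commutative, so its inner twist is trivial, in line with Remark \ref{ts_twice}, which identifies $\underline{\operatorname{Aut}}^0_{X/B}$ with $\underline{\operatorname{Aut}}^0_{X^\phi/B}$. A further point is that the twist is an algebraic space a priori. It is projective by Theorem \ref{torsion_rel_ample}, since étale classes correspond to torsion ones, so birationality takes place in the category of varieties. Finally, one must check that $P|_{B^\circ}$ acts freely and transitively on the fibers. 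This follows because over the smooth locus $\underline{\operatorname{Aut}}^0$ of an abelian variety is the abelian variety itself acting by translation.
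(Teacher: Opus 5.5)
Your proposal is correct and follows essentially the same route as the paper. Both restrict to a dense open $B^0$ avoiding the discriminant and the indeterminacy of both rational sections, and use the section of $X$ to identify $P|_{B^0}$ with the abelian scheme $(A,\mu)$. Both then view $A^\phi$ as the $(A,\mu)$-torsor obtained by \'etale descent and trivialize it with the second section.

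The only blemish is your aside on projectivity. Kim's result gives a bijection on torsion subgroups; it does not say that \'etale classes are torsion. That aside is not needed, though, since the isomorphism $A\cong A^\phi$ over $B^0$ already gives the birational map.
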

\begin{proof}
    Remove from the base the (common) discriminant locus of the fibrations, as well as the indeterminacy loci of the rational sections.
    Denote by $B^0$ the resulting open locus, and $A,A^\phi$ the preimages of $B^0$ under $\pi,\pi^\phi$ respectively.

    Then $A,A^\phi$ are related by an \'etale Tate-Shafarevich twist over $B$.
    Since $A$ has a section, it can be given a canonical structure of an abelian scheme $\mu:A\times_{B^0}A\to A$ over $B^0$ with the section being the identity.
    But then $\underline{\operatorname{Aut}}_{A/B^0}^0\cong(A,\mu)^{\rm an}$ where the latter acts by translation.
    If we let $P\to B$ be the group scheme associated to $X\to B$ as in Theorem \ref{group_scheme_action}, then this shows that $P|_{B^0}\cong (A,\mu)$.
    Therefore $A^\phi\to B^0$ is a twist of $A\to B^0$ by the element
    \[\phi|_{B^0}\in H^1_{\text{\'et}}(B^0,P|_{B^0})=H^1_{\text{\'et}}(B^0,(A,\mu)).\]

    The group law $\mu:A\times_{B^0}A\to A$ gives rise to an action $A\times_{B^0}A^\phi\to A^\phi$ by \'etale descent, making $A^\phi$ a torsor for $(A,\mu)$.
    But $A^\phi\to B^0$ has a section, so this action gives an isomorphism $A\cong A^\phi$ as $B^0$-schemes.
    Therefore $X$ is birational to $X^\phi$.
\end{proof}
\begin{proof}[Proof of Proposition \ref{conjecture_w_section}]
    By Corollary \ref{twist_rat_section}, we can find torsion $\psi,\psi'\in\Sha$ such that $X$ is deformation-equivalent to $X^\psi$, $X^\phi$ is deformation-equivalent to $(X^\phi)^{\psi'}=X^{\phi+\psi'}$ (note also Remark \ref{ts_twice}), and both $X^{\psi}$ and $X^{\phi+\psi'}$ are projective and admit sections.

    Now we have $X^{\phi+\psi'}=(X^\psi)^{\phi+\psi'-\psi}$.
    Since $\phi+\psi'-\psi$ is torsion, the preceding proposition shows that $X^\psi$ is birational to $X^{\phi+\psi'}$.
    Since they are both irreducible hyper-K\"ahler varieties, this means that they are deformation-equivalent by \cite[Theorem 4.6]{huy}.
    Therefore $X$ and $X^\phi$ are deformation-equivalent.
\end{proof}
\bibliographystyle{plain}
\bibliography{ref}
\end{document}